\documentclass[12pt,reqno,dvipsnames]{amsart}
 \usepackage[normalem]{ulem}
 \usepackage[english]{babel}
 \usepackage{latexsym} 
 \usepackage{mathtools,thmtools}
 \usepackage[all]{xy}
 \usepackage{amsfonts,amssymb,amsmath,amsthm,mathrsfs}
 \usepackage{pifont}  
 \usepackage{enumerate}
 \usepackage{dcolumn}
 \usepackage{comment}
\usepackage[inline]{enumitem}
\usepackage{etoolbox}
\usepackage{multicol} 
\setlength{\columnsep}{6ex}
	
\setlength{\marginparwidth}{2cm}
\usepackage[textsize=scriptsize]{todonotes}

\usepackage[bookmarks=false]{hyperref}  
\newcolumntype{2}{D{.}{}{2.0}}
\xyoption{2cell}
	
\makeatletter
\patchcmd{\@setaddresses}{\indent}{\noindent}{}{}
\patchcmd{\@setaddresses}{\indent}{\noindent}{}{}
\patchcmd{\@setaddresses}{\indent}{\noindent}{}{}
\patchcmd{\@setaddresses}{\indent}{\noindent}{}{}
\makeatother

 \usepackage{tikz}
\usetikzlibrary{arrows}


  \def\<{{\langle}} 
  \def\>{{\rangle}}

  \def\eps{\varepsilon}

  \def\note#1{{}}

  \def\note#1{}

   \def\cH{{\mathcal H}}   
  
  \def\cO{{\mathcal O}}
   
     \def\cT{{\mathcal T}}

  \def\lhom#1#2#3{{}{{\rm Hom}\sb{#1}(#2,#3)}}

  \def\lend#1#2{{{\rm End}\sb{#1}(#2)}} 
   
  \def\abs#1{\mathrm{Abs}(#1)}

  \def\End#1#2{{{\rm End}\sb{#1}(#2)}}

  \def\beq{\begin{equation}} 
  \def\eeq{\end{equation}}

  \def\id{\mathrm{id}} 
  \def\im{{\rm Im}}

  \def\ot{{\otimes}} 
   
  \def\Hom{\mbox{\rm Hom}\,}

 \def\bs{\backslash}

  \newcounter{zlist} 
  \newenvironment{zlist}{\begin{list}{(\arabic{zlist})}{ 
  \usecounter{zlist}\leftmargin2.5em\labelwidth2em\labelsep0.5em 
  \topsep0.6ex
  \parsep0.3ex plus0.2ex minus0.1ex}}{\end{list}}

  \newcounter{blist} 
  \newenvironment{blist}{\begin{list}{(\alph{blist})}{ 
  \usecounter{blist}\leftmargin2.5em\labelwidth2em\labelsep0.5em 
  \topsep0.6ex 
  \parsep0.3ex plus0.2ex minus0.1ex}}{\end{list}} 

  \newcounter{rlist}

\def\stac#1{\raise-.2cm\hbox{$\stackrel{\displaystyle\otimes}{\scriptscriptstyle{#1}}$}}

\def\cten#1{\raise-.2cm\hbox{$\stackrel{\displaystyle\reallywidehat{\otimes}}
{\scriptscriptstyle{#1}}$}}

\textheight 23 cm
\textwidth 15.5cm
\topmargin -.4in \headheight 0.3in \headsep .5cm
\oddsidemargin .15in \evensidemargin .15in
\topskip 12pt

\frenchspacing

  \newtheorem{proposition}{Proposition}[section]
  \newtheorem{lemma}[proposition]{Lemma} 
  \newtheorem{corollary}[proposition]{Corollary} 
  \newtheorem{theorem}[proposition]{Theorem} 
  
  \newtheorem{exlemma}[proposition]{Lemma and Example}
\theoremstyle{definition} 
  \declaretheorem[name=Definition,qed={$\lozenge$},sibling=proposition]{definition}
    
  \newtheorem{example}[proposition]{Example}

  \theoremstyle{remark} 
  \declaretheorem[name=Remark,qed={$\triangle$},sibling=proposition]{remark}

  \setcounter{tocdepth}{2} 
  
   \numberwithin{equation}{section}

\newcommand{\Ah}{\mathbf{Ah}}

\newcommand{\Ab}{\mathbf{Ab}}

\newcommand{\spin}{\mathbf{Spin}}

\newcommand{\quandle}{\mathbf{Qndl}}

\newcommand{\Mod}{\mbox{-}\mathbf{ Mod}}
\newcommand{\HMod}{\mbox{-} \mathbf{ HMod}}

\newcommand{\Aff}{\mathbf{Aff}}

\newcommand{\heap}{{\bf Hp}}

\newcommand{\Ann}{{\rm Ann}\,}

\def\ot{\otimes}

\def\FF{{\mathbb F}}

\def\NN{{\mathbb N}}

\def\ZZ{{\mathbb Z}}

\newcommand{\eE}{\mathrm{E}}

\newcommand{\gG}{\mathrm{G}}
\newcommand{\hH}{\mathrm{H}}

\newcommand{\rR}{\mathrm{R}}

\newcommand{\tT}{\mathrm{T}}

\newcommand{\Cc}{\mathcal{C}}

\newcommand{\Ff}{\mathcal{F}}

\newcommand{\Hh}{\mathcal{H}}

\newcommand{\Rr}{\mathcal{R}}

\def\Z{{\bf Z}}

\def\*C{{}^*\hspace*{-1pt}{\Cc}}

\def\text#1{{\rm {\rm #1}}}

\def\Set{\mathbf{Set}}

 \def\k{\mathbf{k}}

 \def\1{\mathbf{1}}

\def\id{\mathrm{id}}

\def\ahrd{\mathbf{Ah}}
\def\grp {\mathbf{Grp}}

\def\lto{\longmapsto}
\def\lra{\longrightarrow}
\def\lhom#1#2#3{\mathrm{Hom}_#1(#2,#3)}

    \def\eps{\varepsilon}

\def\1\mathbf{1}

\def\|#1{\overline{#1}}

\def\k{\Bbbk}
\def\stab{\mathrm{Stab}}
\def\act{\!\cdot\!}
\def\Sym#1{\mathfrak{S}_{#1}}

\newcommand{\MAbs}{T\mbox{-}\mathbf{Abs}}
\newcommand{\GMod}{\mbox{-}\mathbf{Grp}}

\allowdisplaybreaks

\begin{document}
\baselineskip=14.4pt
\title[Heaps of modules]{Heaps of modules and affine spaces}

\author[S. Breaz]{Simion Breaz}

\address{"Babe\c s-Bolyai" University, Faculty of Mathematics and Computer Science, Str. Mihail Kog\u alniceanu 1, 400084, Cluj-Napoca, Romania}
\email{bodo@math.ubbcluj.ro}

\author[T. Brzezi\'nski]{Tomasz Brzezi\'nski}

\address{
Department of Mathematics, Swansea University, 
Swansea University Bay Campus,
Fabian Way,
Swansea,
  Swansea SA1 8EN, U.K.\ \newline 
Faculty of Mathematics, University of Bia{\l}ystok, K.\ Cio{\l}kowskiego  1M,
15-245 Bia\-{\l}ys\-tok, Poland}

\email{T.Brzezinski@swansea.ac.uk}

\author[B. Rybo{\l}owicz]{Bernard  Rybo{\l}owicz}

\address{
Department of Mathematics, 
Heriot-Watt University, 
Edinburgh EH14 4AS, U.K.}

\urladdr{\url{https://sites.google.com/view/bernardrybolowicz/}}
\email{B.Rybolowicz@hw.ac.uk}

\author[P. Saracco]{Paolo Saracco}

\address{
D\'epartement de Math\'ematique, Universit\'e Libre de Bruxelles, 
Bd du Triomphe, B-1050 Brussels, Belgium.}

\urladdr{\url{https://sites.google.com/view/paolo-saracco}}
\urladdr{\url{https://paolo.saracco.web.ulb.be}}

\email{paolo.saracco@ulb.be}

\subjclass[2010]{14R10; 20N10; 08A99}

\keywords{Affine geometry, modules over rings, modules over trusses}

\begin{abstract}
A notion of {\em heaps of modules} as an affine version of modules over a ring or, more generally, over a truss, is introduced and studied. Basic properties of heaps of modules are derived. Examples arising from geometry (connections, affine spaces) and algebraic topology (chain contractions) are presented. Relationships between heaps of modules and modules over a ring  and affine spaces are revealed and analysed. 
\end{abstract}    
\date\today
\maketitle
\tableofcontents

\section{Introduction}
A recent attempt \cite{BreBrz:Bae} to extend the Baer-Kaplansky theorem, relating isomorphisms of abelian $p$-groups to isomorphisms of their endomorphism rings \cite[Theorem 16.2.5]{Fu15}, to all abelian groups and to modules over rings led first to realise that the rings of group endomorphisms should be replaced by the trusses of the corresponding heap endomorphisms \cite{Brz:par}, and then that endomorphisms of modules should be replaced by endomorphisms of some more general module-like structures. The aim of the present text is to introduce and study in a systematic way such structures, which we term {\em heaps of modules}. In particular, we will enlarge upon their unexpected geometric interpretation as affine spaces and modules.

We begin in Section~\ref{sec.prelim} by giving an overview of heaps \cite{Bae:ein,Pru:the} and by carefully explaining why they can be understood as affine versions of groups. Next, we recall the definition and elementary properties of abelian heaps with an associative multiplication distributing over the ternary heap operation, which are called {\em trusses}, and their modules, that are abelian heaps on which the truss acts through a binary operation \cite{Brz:tru,Brz:par}. 
In particular, we recall how with every module $M$ over a truss $T$ and with every element $e\in M$, one can associate the {\em induced $T$-module} structure $(M,\triangleright_e)$ on $M$, which plays an important role in the paper. 
We also introduce the notions of {\em stabilizer} and {\em annihilator} for a module over a truss $T$ and we study how these are related with the corresponding constructions for the induced $T$-module structures. 
Finally, we describe abelian groups with a structure of module over a truss $T$, called {\em $T$-groups}, which will represent the linear core of affine spaces over trusses.

Section~\ref{sec:mod} is devoted to the definition of heaps of modules over a truss $T$, Definition~\ref{def:hom}, and their elementary properties. The key feature here is that a truss $T$ acts on its (abelian) heap of modules $M$ not through a binary operation $T\times M\lra M$ (as is the case for a $T$-module)  but by a ternary operation $\Lambda: T\times M\times M\lra M$ instead. 
In Subsection~\ref{subs:first} we study some first properties of heaps of modules and, in particular, we show the mutual independence of axioms. We introduce sub-heaps of modules and explain how the corresponding equivalence relation is a congruence of heaps of modules and, conversely, how equivalence classes of a congruence of heaps of modules are sub-heaps of modules. We show how fixing a middle term in the action $\Lambda$ of a heap of $T$-modules yields a $T$-module  $(M,\act_e) = \Lambda(-,e,-)$ (so we have a {\em heap} of modules indeed), exactly as fixing a middle entry in a heap produces a group. Next, we prove that there is a correspondence between heaps of modules and induced actions and how this provides a functor $\Hh$ from the category of $T$-modules to the category of heaps of $T$-modules. 
In Subsection~\ref{ssec:UZprop} we consider {\em stabilizers} and {\em annihilators} for heaps of modules over a truss $T$, which play a key role in correctly identifying affine modules over a ring among heaps of modules over a suitably related truss. They extend the corresponding notions introduced for modules in Section~\ref{sec.prelim}. 
Heaps of modules with non-empty stabilizers are said to be {\em isotropic}, while those with non-empty annihilator are said to be {\em contractible}. 
We end Section~\ref{sec:mod} with an explicit construction, extending \cite[Theorem~4.2]{BrzRyb:con}, that provides a cross-product truss structure on the product $M\times T$ of a heap of $T$-modules $M$ and $T$ itself for every element $e$ of $M$.

Section \ref{sec:aff} contains main results showing how heaps of modules over a truss are intimately related with affine geometry and how they provide an algebraic description of affine modules over a ring or affine spaces over a field.
We start Section \ref{sec:aff} by showing, in Proposition~\ref{prop:newTHmodTMod}, why homomorphisms of heaps of $T$-modules $f\in T\HMod(M,N)$ are, in fact, translations of homomorphism of $T$-groups from $(M,\act_m)$ to $(N,\act_{n})$, for an arbitrary choice of $m\in M$ and $n\in N$, exactly as affine maps are translations of linear morphisms. 
In Subsection~\ref{ssec:aff}, we fully describe the affine nature of heaps of modules. After realizing that affine modules over a ring, as defined in \cite[page 45]{Ostermann-Schmidt}, are isotropic and contractible heap of $\tT(R)$-modules (see Proposition~\ref{prop:affinemods}), we present the definition of a $T$-affine space (Definition~\ref{def:Taffinespace}) as a straightforward extension of the classical definition of affine space over a field by replacing the free and transitive action of a vector space by a free and transitive action of a $T$-group. The main result of this section is Theorem~\ref{thm:affT}, which states that the categories of $T$-affine spaces and of heaps of $T$-modules are equivalent. This result leads us to few immediate conclusions. 
For example, Corollary~\ref{cor:classic} asserts that the category of isotropic $\star$-affine spaces, where $\star$ denotes the singleton truss, is equivalent to the category of isotropic heaps of $\star$-modules, that is, abelian heaps. Thus, we deduce that the categories of inhabited abelian heaps and of torsors over abelian groups are equivalent. 
We conclude this section by proving that the category of affine spaces over a field $\mathbb{F}$ is equivalent to the full subcategory of heaps of $\tT(\mathbb{F})$-modules consisting of inhabited isotropic contractible heaps of $T$-modules (see Corollary~\ref{cor:afffield}). All of that together wells up in a sentence: heaps of $T$-modules are affine versions of $T$-groups, that is, heap of modules are the natural extension of affine spaces to modules over rings or trusses.

Section~\ref{sec:appl} contains various examples and some applications of heaps of modules. In particular, Subsection \ref{ssec:BK} is devoted to prove a Baer-Kaplansky theorem for $T$-groups by taking advantage of the results from Section \ref{sec:aff}. Subsection~\ref{ssec:spindle} studies the appearances of heaps of modules in algebraic systems related to the classification of knots, such as spindles and quandles. For instance, we show how affine spindle structures on an abelian group can be organised into a heap of modules (Example \ref{ex:HModaffspindle}). More generally, to any element $u$ of a truss $T$ one can assign a fully faithful functor from the category of heaps of $T$-modules to the category of spindles. If, in addition the element $u$ has a suitable companion, this functor has its image in the category of (entropic) quandles; see Theorem~\ref{thm:quandle}. Since to every spindle (quandle) one can associate a solution to the set-theoretic Yang-Baxter equation, we conclude that heaps of modules yield such solutions. In Subsection~\ref{ssec:split}, finally, we provide examples of heaps of modules that arise from (non-commutative) geometry and homological algebra. We show that non-commutative connections and hom-connections can be organised into heaps of modules, too, and -- as a consequence of Theorem~\ref{thm:quandle} -- how they give rise to spindles (or, if the element inducing the spindle is a unit, quandles) and hence to solutions to the set-theoretic Yang-Baxter equation. Finally, we construct heaps of modules consisting  of splittings and retractions of short exact sequences, and -- more generally -- consisting of chain contractions. 

We use the following categorical conventions and notation. The set of morphisms with domain $A$ and codomain $B$ in a category $\mathbf{C}$ is denoted by $\mathbf{C}(A,B)$. In case of the same domain and codomain $A$, the monoid of endomorphism of $A$ is denoted simply by $\mathbf{C}(A)$. We write $\mathbf{C}(A)^\times$ for the group of units in $\mathbf{C}(A)$, that is, automorphisms of $A$. In general, $M^\times$ denotes the group of units in a monoid $M$. The category of groups is denoted by $\grp$ and its full subcategory of abelian groups is denoted by $\Ab$.\label{p-grp}\label{p-cat-ab} We denote by $\End{}A=\Ab(A,A)$ the endomorphism ring of an abelian group $A$. In case of left (or right) modules $M$, $N$ over a ring $R$, the abelian groups of homomorphisms are denoted by $\Hom_ R(M,N)$, and the endomorphism ring of, say $M$, by $\End RM$. In anticipation of a possible confusion arising from the wealth of notation employed in this text, and for the convenience of the reader, we include the list of frequently used symbols in Appendix~\ref{appendix}.

\section{Preliminaries and first results}\label{sec.prelim}

In this section we recall the notions of heap, truss, module over a truss and their morphisms, which will be needed throughout the paper. We also introduce and discuss the notions of isotropy and contracting paragons for a module and of abelian group with action of a truss, which will have a role to play in relating affine modules over rings or trusses with heaps of modules (see \S\ref{sec:aff}).

\subsection{Heaps and their morphisms}\label{ssec:heaps}
We start with the definition of a heap, which can trace its roots back to \cite{Bae:ein} and \cite{Pru:the}.
\begin{definition}
A {\em heap} is a set $H$ with a ternary operation $[-,-,-]\colon H\times H\times H\lra H$ such that for all $a,b,c,d,e\in H$ the following axioms hold:
\begin{align}
[a,b,[c,d,e]] & = [[a,b,c],d,e] & \textrm{(Associativity)}, \label{eq:assoc} \\
[a,b,b] & = [b,b,a] = a & \textrm{(Mal'cev identities)}. \label{eq:malcev}
\end{align}
Moreover, if $[a,b,c]=[c,b,a]$ for all $a,b,c\in H$, then $H$ is called an {\em abelian heap}.
\end{definition}

For a heap $H$, it can be checked that also the following associativity holds
\begin{equation}\label{eq:middleass}
[a,b,[c,d,e]] = [a,[d,c,b],e],
\end{equation}
for all $a,b,c,d,e \in H$  (see \cite[Lemma 2.3(2)]{Brz:par}).

\begin{definition}
A {\em homomorphism of heaps} is a function $f\colon H\lra H'$ between heaps $H$ and $H'$ which preserves the ternary operation, that is, for all $a,b,c\in H$ we have $f([a,b,c])=[f(a),f(b),f(c)]$. We denote by $\heap$ the category of heaps and their homomorphisms and by $\Ah$ the full subcategory of $\heap$ consisting of abelian heaps.\label{p-cat-heaps}\label{p-cat-ah}
\end{definition}

Among all homomorphisms of heaps a special role is played by {\em translation automorphisms}, defined for all $a,b\in H$ by the formula
\begin{equation}\label{eq:trans}
\tau_a^b\colon H\lra H, \qquad x\lto [x,a,b].
\end{equation} \label{page-trans-auto}
The set of all translation automorphisms together with the identity of $H$  is denoted by $\mathrm{Trans}(H)$. Since the inverse of $\tau_a^b$ is given by $\tau_b^a$, the set $\mathrm{Trans}(H)$ is closed under inverses. Furthermore, one easily proves that, for all $a,a',b,b'\in H$,
\begin{equation}\label{comp.trans}
    \tau_a^{b}\circ \tau_{a'}^{b'} \stackrel{\eqref{eq:assoc}}{=} \tau_{a'}^{[b',a,b]} \stackrel{\eqref{eq:middleass}}{=} \tau_{[a,b',a']}^{b}.
\end{equation}
Therefore, $\mathrm{Trans}(H)$ \label{p-trans-group} is a subgroup of the automorphism group of $H$, which is called the \emph{translation group} of $H$. The translation group of $H$ is abelian if $H$ is abelian. In view of \eqref{eq:trans}, for any homomorphism of heaps $f:H\lra H'$, the map
\begin{equation}\label{trans.funct}
\mathrm{Trans}(f)\colon \mathrm{Trans}(H)\lra \mathrm{Trans}(H'), \qquad \tau_a^b\lto \tau_{f(a)}^{f(b)},
\end{equation}
is a homomorphism of groups. This gives a functor $\mathrm{Trans}\colon \heap\lra \grp$ that restricts to a functor $\Ah\lra\Ab$, that we denote by $\mathrm{Trans}$ again.

A \emph{sub-heap} of a heap $H$ is a subset $S$ closed under the ternary operation. Given a non-empty sub-heap $S$ of $H$ one can define the sub-heap equivalence relation by $x\sim_Sy$ provided $[x,y,s]\in S$, for all $s\in S$. \label{page-sub-heap-congr} The quotient set is denoted by $H/S$. In case of an abelian heap, the sub-heap relation is a congruence and consequently $H/S$ is an abelian heap too. Furthermore, each equivalence class is a sub-heap of $H$. Any congruence relation of abelian heaps is a sub-heap relation.

With every group $(G,\cdot,e)$ we can associate a heap $\hH(G) = (G,[-,-,-])$ where $[x,y,z] = xy^{-1}z$ for all $x,y,z\in G$. \label{functor-heap} This assignment is functorial, thus yielding a functor 
$\hH \colon  \grp \to \heap$. \label{functor-Hh}
In the opposite direction, with every non-empty heap $H$ and an element $e\in H$, we can associate a group $\gG(H;e)=(H,[-,e,-])$, \label{groups-retract} where  the binary operation is acquired by fixing the middle variable in the ternary operation ($e$ is the neutral element for this operation). The group $\gG(H;e)$ is called the {\em  $e$-retract of the heap $H$}. Note that for all heaps $H$ and $e\in H$, $\hH(\gG(H;e))=H$, while for every group $G$ and $e\in G$, $\gG(\hH(G);e)) \cong G$ with the equality if $e$ is the neutral element of $G$. 

The assignment of a retract to a heap and an element is not functorial. The subsequent results explore the relationship between morphisms of heaps and morphisms of the associated retracts. In particular, they clarify why heaps can be understood as affine versions of groups.

\begin{proposition}\label{prop:AhAb}
Let $H,H'$ be non-empty heaps, let $e \in H$ and $e'\in H'$, and let $f \colon  H \lra H'$ be a function. Then $f \in \grp\big(\gG(H;e),\gG(H';e')\big)$ if and only if $f$ is in $\heap(H,H')$ and $f(e) = e'$.
\end{proposition}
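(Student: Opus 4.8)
The plan is to translate between the ternary heap operation and the binary operation of the retracts, exploiting the identity $\hH(\gG(H;e)) = H$ recorded above. This identity says precisely that the heap operation is recovered from the retract group by the rule $[a,b,c] = a \cdot b^{-1} \cdot c$, where $\cdot$ and $(-)^{-1}$ denote the multiplication and inversion of $\gG(H;e)$ (explicitly $a \cdot c = [a,e,c]$ and $b^{-1} = [e,b,e]$), and likewise for $H'$ and $\gG(H';e')$.

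For the implication from right to left, suppose $f \in \heap(H,H')$ and $f(e) = e'$. Then for all $x,y \in H$ one computes directly
\[
f([x,e,y]) = [f(x),f(e),f(y)] = [f(x),e',f(y)],
\]
so $f$ carries the product of $\gG(H;e)$ to the product of $\gG(H';e')$; here the hypothesis $f(e)=e'$ is exactly what is needed to land in the retract taken at $e'$ rather than at $f(e)$. Together with $f(e)=e'$ this exhibits $f$ as a morphism in $\grp(\gG(H;e),\gG(H';e'))$, and the verification is but one line.

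The substantive direction is from left to right. Assume $f \in \grp(\gG(H;e),\gG(H';e'))$. Since group homomorphisms preserve neutral elements, $f(e)=e'$ is immediate. For the heap-homomorphism property I would use that a group homomorphism preserves both products and inverses, so that applying $f$ to $[a,b,c] = a \cdot b^{-1} \cdot c$ gives
\[
f([a,b,c]) = f(a)\cdot f(b)^{-1} \cdot f(c) = [f(a),f(b),f(c)],
\]
the last equality being the same description of the heap operation of $H'$ furnished by $\hH(\gG(H';e'))=H'$. Hence $f \in \heap(H,H')$ with $f(e)=e'$, as required.

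I expect the only point requiring genuine care to be the bookkeeping in the left-to-right direction: the middle entry of $[a,b,c]$ enters through the group inverse $b^{-1}=[e,b,e]$, and one must be sure this is respected. This is automatic since homomorphisms of groups preserve inverses, and it is precisely the step that uses the full group-homomorphism hypothesis rather than mere compatibility with the binary product. If one wishes to sidestep the identity $\hH(\gG(H;e))=H$, the same computation can be performed by hand from the elementary identity $[a,b,c]=[[a,b,e],e,c]$, which follows at once from \eqref{eq:assoc} and \eqref{eq:malcev}.
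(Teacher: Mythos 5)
Your proof is correct and follows essentially the same route as the paper: the right-to-left direction is the same one-line computation, and your left-to-right direction simply unpacks the paper's appeal to the functoriality of $\hH$ together with $\hH(\gG(H;e))=H$ (namely, that a group homomorphism preserves $ab^{-1}c$ and hence the heap bracket). No gaps.
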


\begin{proof}
By definition $f \in \grp\big(\gG(H;e),\gG(H';e')\big)$ if and only if
\begin{equation}\label{eq:AhAb1}
f\big([x,e,y]\big) = \big[f(x),e',f(y)\big], \qquad \textrm{for all }x,y \in H.
\end{equation}
Clearly, if $f \in \heap(H,H')$ and $f(e) = e'$ then \eqref{eq:AhAb1} is satisfied. 

Conversely, if $f \in \grp\big(\gG(H;e),\gG(H';e')\big)$ then
\[f = \hH(f) \in \heap\big(\hH(\gG(H;e)),\hH(\gG(H';e'))\big) = \heap(H,H')\]
and $f(e) = e'$.
\end{proof}

\begin{corollary}[{\cite[Lemma 2.1]{BreBrz:Bae}}]\label{cor:AhAb}
For non-empty heaps $H,H'$, a function $f \colon  H \lra H'$ is a morphism of heaps  if and only if $\tau_{f(e)}^{e'} \circ f \in \grp\big(\gG(H;e),\gG(H';e')\big)$, for all $e \in H$ and $e'\in H'$. In particular, $f \in \heap(H,H')$ if and only if $f \in \grp\big(\gG(H;e),\gG(H';f(e))\big)$, for all $e \in H$.
\end{corollary}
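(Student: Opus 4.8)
The plan is to leverage Proposition~\ref{prop:AhAb} together with the facts, recorded in~\S\ref{ssec:heaps}, that every translation $\tau_a^b$ is an automorphism of heaps, that $\mathrm{Trans}(H')$ is a group under composition, and that the inverse of $\tau_{f(e)}^{e'}$ is $\tau_{e'}^{f(e)}$. Writing $g := \tau_{f(e)}^{e'}\circ f$, so that $g(x) = [f(x),f(e),e']$ for all $x\in H$, the first thing I would check is that $g(e) = [f(e),f(e),e'] = e'$ by the Mal'cev identities~\eqref{eq:malcev}. This simple observation is exactly what allows Proposition~\ref{prop:AhAb} to be applied to $g$ with base points $e$ and $e'$.

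For the forward implication, if $f\in\heap(H,H')$ then $g$ is a composite of two homomorphisms of heaps, namely $f$ and the translation automorphism $\tau_{f(e)}^{e'}$, hence it is itself a homomorphism of heaps; since moreover $g(e) = e'$, Proposition~\ref{prop:AhAb} yields $g\in\grp\big(\gG(H;e),\gG(H';e')\big)$, and this for every choice of $e\in H$ and $e'\in H'$. For the converse it suffices to exhibit a single pair $(e,e')$ for which $g\in\grp\big(\gG(H;e),\gG(H';e')\big)$: by Proposition~\ref{prop:AhAb} such a $g$ is in particular a homomorphism of heaps, and then $f = \tau_{e'}^{f(e)}\circ g$ is again a composite of homomorphisms of heaps, whence $f\in\heap(H,H')$.

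The ``in particular'' statement is then immediate by specialising to $e' = f(e)$: the translation $\tau_{f(e)}^{f(e)}$ is the identity on $H'$ by the Mal'cev identities, so $g = f$ and the displayed equivalence reduces to the assertion that $f\in\heap(H,H')$ if and only if $f\in\grp\big(\gG(H;e),\gG(H';f(e))\big)$ for all $e\in H$. Alternatively, this drops straight out of Proposition~\ref{prop:AhAb} with $e'=f(e)$, since the side condition $f(e)=e'$ becomes vacuous. No step here presents a genuine obstacle; the only points requiring care are verifying $g(e)=e'$ so that the base points match in the invocation of Proposition~\ref{prop:AhAb}, and correctly tracking which translation inverts which when writing $f$ back in terms of $g$.
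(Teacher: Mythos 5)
Your proposal is correct and follows essentially the same route as the paper: both hinge on observing that $\big(\tau_{f(e)}^{e'}\circ f\big)(e)=e'$, that translations are heap automorphisms closed under inversion, and then invoking Proposition~\ref{prop:AhAb}. The only cosmetic difference is that the paper phrases the argument as a single chain of equivalences for each fixed pair $(e,e')$, while you split it into the two implications and note explicitly that the converse needs only one pair; the mathematical content is identical.
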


\begin{proof}
Recall that $\tau_{f(e)}^{e'} \in \heap(H')^\times$ and observe that $\big(\tau_{f(e)}^{e'} \circ f\big)(e) = e'$. Therefore, in view of Proposition \ref{prop:AhAb}, $\tau_{f(e)}^{e'} \circ f \in \grp\big(\gG(H;e),\gG(H';e')\big)$ if and only if $\tau_{f(e)}^{e'} \circ f \in \heap\big(H,H'\big)$, if and only if $f \in \heap\big(H,H'\big)$.
\end{proof}

\begin{remark}\label{rem-heap-morphism}
Let $f\colon H\lra H'$ be a morphism of heaps. It follows from Corollary \ref{cor:AhAb} that for any group operation associated to $H$, there exists a group operation associated to $H'$ such that $f$ is a morphism of groups with respect to these operations.
\end{remark}

\begin{corollary}\label{cor:mapsheaps}
Let $(G,\cdot,1_G)$ and $(H,\cdot,1_H)$ be groups. Then
\[
f\in \heap(\hH(G),\hH(H)) ~\Leftrightarrow~ \tau_{f(1_G)}^{1_H}\circ f=[x\lto f(x)f(1_G)^{-1}]\in \grp(G,H).
\]
\end{corollary}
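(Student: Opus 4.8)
The plan is to derive this directly from Corollary~\ref{cor:AhAb}, applied to the heaps $\hH(G)$ and $\hH(H)$ with distinguished elements the group units $e=1_G$ and $e'=1_H$. Since the proof of Corollary~\ref{cor:AhAb} establishes, via Proposition~\ref{prop:AhAb} and the fact that $\tau_{f(e)}^{e'}$ is a heap automorphism, the equivalence separately for each choice of base points, I may instantiate it at this single pair to obtain
\[
f\in\heap(\hH(G),\hH(H)) \quad\Longleftrightarrow\quad \tau_{f(1_G)}^{1_H}\circ f\in\grp\big(\gG(\hH(G);1_G),\gG(\hH(H);1_H)\big).
\]

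Next I would simplify the right-hand side. As recalled before Proposition~\ref{prop:AhAb}, the retract of $\hH(G)$ at the neutral element returns $G$ on the nose, that is $\gG(\hH(G);1_G)=G$, and likewise $\gG(\hH(H);1_H)=H$. The use of the strict equality here, rather than a mere isomorphism, is the one point that deserves care, since it is exactly what makes the target literally the hom-set $\grp(G,H)$; with it in hand the condition becomes $\tau_{f(1_G)}^{1_H}\circ f\in\grp(G,H)$.

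Finally I would unwind the translation automorphism explicitly. By \eqref{eq:trans} the map $\tau_{f(1_G)}^{1_H}$ sends $y$ to $[y,f(1_G),1_H]$ computed in the heap $\hH(H)$, and since that heap operation is $[x,y,z]=xy^{-1}z$, this equals $y\,f(1_G)^{-1}1_H=y\,f(1_G)^{-1}$. Precomposing with $f$ therefore yields precisely the map $x\lto f(x)f(1_G)^{-1}$, matching the displayed formula and closing the equivalence. Beyond correctly reading off this translation there is no genuine obstacle: the statement is a bookkeeping specialization of the earlier corollary to heaps arising from groups and to their neutral elements.
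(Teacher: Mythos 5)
Your proposal is correct and follows the paper's own proof exactly: instantiate Corollary~\ref{cor:AhAb} at the neutral elements and use the strict equalities $\gG(\hH(G);1_G)=G$ and $\gG(\hH(H);1_H)=H$. The explicit unwinding of $\tau_{f(1_G)}^{1_H}$ is a harmless extra verification that the paper leaves implicit.
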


\begin{proof}
Since $G = \gG\big(\hH(G);1_G\big)$ and $H = \gG\big(\hH(H);1_H\big)$, the statement follows from Corollary \ref{cor:AhAb}.
\end{proof}

\begin{proposition}\label{prop:affine}
The category $\grp$ is isomorphic to the under category $\star/\heap$.
\end{proposition}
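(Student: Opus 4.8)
The plan is to unwind the definition of the under (coslice) category $\star/\heap$ and to recognize the functors $\hH$ and $\gG$ recalled before the statement as mutually inverse isomorphisms. First I would identify the objects and morphisms of $\star/\heap$. Since $\star$ is the one-element heap and, by the Mal'cev identities \eqref{eq:malcev}, every element $a$ of a heap satisfies $[a,a,a]=a$, a heap homomorphism $\star\lra H$ is nothing but a choice of element $e\in H$: the compatibility requirement $f([*,*,*])=[f(*),f(*),f(*)]$ holds automatically. Hence an object of $\star/\heap$ is a pointed heap $(H,e)$, and a morphism $(H,e)\lra(H',e')$ is a heap homomorphism $g\colon H\lra H'$ with $g(e)=e'$.

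Next I would set up the candidate functors. In one direction, send a group $(G,\cdot,1_G)$ to the pointed heap $(\hH(G),1_G)$ and a group homomorphism $\phi$ to $\hH(\phi)$, which preserves the distinguished points because $\phi(1_G)=1_{H}$; this is functorial by the functoriality of $\hH$. In the other direction, send a pointed heap $(H,e)$ to its retract group $\gG(H;e)$ and a morphism $g\colon(H,e)\lra(H',e')$ to $g$ itself, now regarded as a map $\gG(H;e)\lra\gG(H';e')$. That $g$ is indeed a group homomorphism for these retract structures is precisely the content of Proposition~\ref{prop:AhAb}, with $e$ and $e'$ playing the role of the chosen middle elements; functoriality is immediate since the underlying functions are left unchanged.

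Finally I would verify that these two assignments are mutually inverse on the nose. On objects this rests on the two equalities recorded before the statement: $\gG(\hH(G);1_G)=G$ as groups, since the retract operation is $[x,1_G,y]=x\cdot 1_G^{-1}\cdot y=x\cdot y$ and $1_G$ is its neutral element; and $\hH(\gG(H;e))=H$ as heaps, the neutral element of $\gG(H;e)$ being exactly $e$, so that the basepoint is returned unchanged. Thus both round trips recover the original object identically. On morphisms both composites act as the identity, since neither functor alters the underlying function.

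The main obstacle is not any single computation but ensuring strictness: the proposition claims an \emph{isomorphism} of categories, not merely an equivalence, so I must confirm that the round-trip composites are literally identity functors. This is exactly why the \emph{equalities} $\hH(\gG(H;e))=H$ and $\gG(\hH(G);1_G)=G$ are indispensable, together with the observation that the distinguished point $e$ coincides with the neutral element of the retract $\gG(H;e)$; an argument using only isomorphisms would yield an equivalence but fall short of the asserted isomorphism.
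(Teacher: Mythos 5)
Your proof is correct and follows essentially the same route as the paper: the paper likewise builds the functor $G\lto(\hH(G),1_G)$, invokes Proposition~\ref{prop:AhAb} (via Corollary~\ref{cor:mapsheaps}) to see that the pointed-retract assignment $(H,e)\lto\gG(H;e)$ is its inverse, and relies on the equalities $\hH(\gG(H;e))=H$ and $\gG(\hH(G);1_G)=G$. Your added attention to strictness of the round trips, and to the fact that the basepoint $e$ is recovered as the neutral element of the retract, is exactly the point that upgrades an equivalence to an isomorphism of categories.
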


\begin{proof}
By Corollary \ref{cor:mapsheaps}, the functor $\hH\colon \grp \lra \heap$ induces a fully faithful functor $\hH_\star\colon\grp \lra \star/\heap$ sending every group $G$ to the object $(\hH(G),1_G\colon\star \lra \hH(G))$. In view of Proposition \ref{prop:AhAb}, also the assignment $(H, e\colon \star \lra H) \lto G(H;e)$ induces a fully faithful functor, which is inverse to $\hH_\star$.
\end{proof}

\begin{remark}\label{rem:abelianization}
The properties from Proposition \ref{prop:AhAb}, Corollary \ref{cor:AhAb}, Corollary \ref{cor:mapsheaps} and Proposition \ref{prop:affine} hold unchanged in the abelian case, that is, if we substitute $\grp$ with $\Ab$ and $\heap$ with $\Ah$.
\end{remark}

{
\begin{example}\label{ex:heaps}
Let us present a few elementary examples of heaps.
\begin{enumerate}[label=(\arabic*),ref=(\arabic*),leftmargin=0.8cm]
 \item\label{item:exheaps0} An empty set $\varnothing$ is a heap with a trivial ternary operation $\varnothing \times \varnothing\times \varnothing=\varnothing\lra \varnothing$.
    \item\label{item:exheaps1} 
    Any singleton set $\star \coloneqq \{*\}$ together with $[*,*,*] = *$ is a heap and it is the terminal object in $\heap$. \label{page-star}
    \item The set $H \coloneqq 2\ZZ+1$ of odd numbers is an abelian heap with 
    \[[2n+1,2m+1,2p+1] \coloneqq 2(n-m+p)+1\]
    for all $n,m,p \in \ZZ$. Its retract at $1$ is a group with respect to 
    \[(2m+1)\cdot(2n+1) = 2(m+n)+1\]
    for all $m,n\in \Z$ and with neutral element $1$. Obviously, the latter is isomorphic to $\ZZ$ via the map $2m+1\lto m$.
    \item\label{item:exheaps3} More generally, a subset $S$ of a group $G$ is a non-empty sub-heap of $\hH(G)$ if and only if it is a coset for some subgroup $G'$ of $G$ (see \cite[Theorem 1]{Certaine}) and any heap $H$ can be realised as a coset of a certain group $G$. 
    
    In fact, one may always consider the group $G \coloneqq \mathrm{Gr}_*(H) = \gG(H\boxplus \star;*)$ from \cite[\S3]{Ryb-Free} obtained by adding a neutral element $*$ to the non-empty heap $H$ via the coproduct of heaps $\boxplus$. Then the canonical injection $\iota_H\colon  H \to G$ into the coproduct allows one
    to realise $H$ as a subset of $G$. For any $x \in H$, if we consider the restriction of 
    \[\tau_x^*\colon G \lra G,\qquad g \lto [g,x,*] = gx^{-1},\]
    to $H$ and we set $G'\coloneqq \tau_x^*(H) = Hx^{-1}$, then $G'$ is a subgroup of $G$. Indeed,  
    \[* = xx^{-1} \in Hx^{-1}, \quad yx^{-1}\cdot zx^{-1} = [y,x,z]x^{-1} \in Hx^{-1}\]
    and 
    \[\left(yx^{-1}\right)^{-1} = [x,y,x]x^{-1} \in Hx^{-1},\]
    for all $y,z \in H$. In this setting, it is clear that $H$ as a subset of $G$ coincides with the coset $G'x$.
    
    It is noteworthy that for all $y,z \in H$,
    \[\tau_*^x\left(\tau_x^*(y)\cdot \tau_x^*(z)\right) = \left[\left[y,x,*\right],*,\left[z,x,*\right],*,x\right] = [y,x,z] = y\cdot z \in \gG(H;x).\]
    Hence the group structure on $H$ obtained by transport of the group structure on $G'$ along $\tau_x^*$ is exactly the one of the retract of $H$ at $x$.
\end{enumerate}
\end{example}
}

\subsection{Trusses and modules}

Here we recall briefly some notions from \cite{Brz:par}.

\begin{definition}[Trusses and paragons]
A {\em truss} is an abelian heap $T$ together with an associative binary operation $\cdot\colon  T\times T\lra T$, $(a,b)\lto ab$ such that, for all $a,b,c,d\in T$,
\begin{enumerate}[leftmargin=0.8cm]
\item $a[b,c,d]=[ab,ac,ad]$ (left distributivity),
\item $[b,c,d]a=[ba,ca,da]$ (right distributivity).
\end{enumerate}
A truss $T$ is called \emph{unital} if there exists a neutral element for $\cdot$, usually denoted by $1_T$ or simply $1$. It is called \emph{commutative} if the binary operation $\cdot$ is commutative. A \emph{sub-truss} of $T$ is a sub-heap closed under multiplication.

A {\em homomorphism of trusses} is a heap homomorphism $f\colon T\lra T'$, that preserves binary operations, that is, $f(ab)=f(a)f(b)$ for all $a,b\in T$.

Given a truss $T$, a \emph{paragon} in $T$ is a non-empty sub-heap $P \subseteq T$ such that for all $p,q \in P$ and all $t \in T$, 
\[
[tp,tq,q] \in P \qquad \text{and} \qquad [pt,qt,q] \in P.
\]
Paragons are exactly equivalence classes of congruences in trusses, which in turn always arise as sub-heap relations by paragons.

Given a truss $T$, a \emph{two-sided ideal} in $T$ is a non-empty sub-heap $I \subseteq T$ such that for all $x \in I$ and all $t \in T$, $tx \in I$ and $xt \in I$.
\end{definition}

\begin{remark}
Observe that in the definitions of paragon and ideal, the word non-empty appears. Since the empty relation is not an equivalence relation, we cannot connect empty sub-heaps with congruences on heaps and trusses. Nevertheless, we still consider the empty set as a sub-heap and sub-truss.
\end{remark}

\begin{remark}\label{rem:para}
For any morphism of trusses $f:T\lra T'$ and $e\in \im{f}$, the inverse image $f^{-1}(e)$ is a paragon in $T$. In fact every paragon in $T$ arises in this way.
\end{remark}

\begin{example}\label{ex:truss}
All the examples from Example \ref{ex:heaps} can be adapted to provide examples of trusses. For instance, for any (unital) ring $R$, the abelian heap $\hH(R)$ is a (unital) truss with respect to the same product of $R$. We denote it by $\tT(R)$.\label{p-ass-truss}

Let us focus, in particular, on example \ref{item:exheaps3}. By summarising one of the main messages of \cite[Part 2]{AndBrzRyb:ext}, given a (unital) truss $T$ one can always endow the abelian group $\gG(T\boxplus \star;*)$ with a unique (unital) ring structure $\cdot$ by declaring 
\[s\cdot t = st, \qquad *\cdot*=* \qquad \text{and} \qquad t\cdot * = * = * \cdot t,\] for all $s,t \in T$. Denote the resulting (unital) ring by $R$. Then $\iota_T \colon  T \to R$ allows us to identify $T$ with a subset of $R$. Moreover, if $T$ is non-empty then for any $e \in T$, $I \coloneqq \tau_e^*(T)$ is a two-sided ideal of $R$. Indeed,  we already know it is a subgroup and moreover one may check that the elements $[r \cdot t,r\cdot e,e]$ and $[t\cdot r,e\cdot r,e]$ of $R$ are actually in $T$ for all $r\in R$, $e,t \in T$, whence
\[
\begin{gathered}
r\cdot[t,e,*] = [r\cdot t,r\cdot e,*] = \big[[r\cdot t,r\cdot e,e],e,*\big] = \tau_e^*\big([r\cdot t,r\cdot e,e]\big) \qquad \text{and} \\
[t,e,*]\cdot r = [t\cdot r,e\cdot r,*] = \big[[t\cdot r,e\cdot r,e],e,*\big] = \tau_e^*\big([t\cdot r,e\cdot r,e]\big).
\end{gathered}
\]

It is clear then that $T = \tau_*^e(I) = I + e \subseteq R$. Therefore, a subset $T$ of a ring $R$ is an equivalence class for some congruence on $R$ if and only if it is a paragon in $\tT(R)$ (whence, in particular, a sub-truss) and any non-empty truss can be realised as a residue class modulo $I$ for an ideal $I$ in a certain ring $R$.
\end{example}

\begin{example}\label{ex:endotruss}
For all abelian heaps $H,H'$, $\Ah(H,H')$ is an abelian heap with the pointwise operation. Furthermore the composition of morphisms distributes over this ternary operation. Consequently, $\Ah(H)$ is a unital truss, called the {\em endomorphism truss} and denoted by $\eE(H)$.\label{page-end-tr}
\end{example}

\begin{definition}
A {\em left module over a truss } $T$ or a {\em left $T$-module} is an abelian heap $M$ together with an action $\cdot \colon T\times M\lra M$ such that for all $t,t',t''\in T$ and $m,n,e\in M$,
\begin{enumerate}[leftmargin=0.8cm]
\item $t\cdot (t'\cdot m)= (tt')\cdot m$,
\item $[t,t',t'']\cdot m=[t\cdot m,t'\cdot m,t''\cdot m]$,
\item $t\cdot[m,n,e]=[t\cdot m,t\cdot n,t\cdot e]$.
\end{enumerate}
A left module $M$ over a unital truss $T$ is said to be \emph{unital} if $1_T \cdot m = m$ for all $m \in M$.
Analogously, one can define (\emph{unital}) {\em right $T$-modules}.
A \emph{morphism of $T$-modules} $f\colon  M \lra N$ is a heap homomorphism which is also $T$-linear, in the sense that $f(t \cdot m) = t \cdot f(m)$ for all $t \in T$ and $m \in M$. 

Modules over a truss $T$ and their morphisms form the category $T\Mod$.\label{p-t-mod}
\end{definition}

\begin{remark}
Given a truss $T$, a $T$-module can be equivalently described as an abelian heap $M$ together with a truss homomorphism $\phi\colon T \lra \eE(M)$. As in the case of modules over rings, the correspondence is given by $\phi(t)(m) = t\cdot m$.
\end{remark}

\begin{example}\label{ex:modules}
Here are some elementary examples of modules.
\begin{enumerate}[leftmargin=0.8cm]
\item The empty heap $\varnothing$ is a $T$-module with the unique operation $T\times \varnothing=\varnothing\to \varnothing$.
\item A singleton set $\star \coloneqq \{*\}$ together with the heap operation from Example \ref{ex:heaps}\ref{item:exheaps1} and the action of $T$ given by $t\cdot * = *$ for all $t\in T$, is the terminal object in $T\Mod$.
\item Given a ring $R$ and the associated truss $\tT(R)$, any $R$-module $M$ can be seen as a $\tT(R)$-module with respect to the $\hH(M)$ heap structure and the same $R$-action. We will denote the $\tT(R)$-module $\hH(M)$ by $\tT(M)$.\label{ex:assomodule}
    \item A truss $T$ is a left module over itself by multiplication. We refer to this as the \emph{regular action}.
    \item For any abelian heaps $H,H'$, $\Ah(H',H)$ is a left module over the endomorphism truss $\eE(H)$ and a right module over $\eE(H')$. The actions are by composition. Since $\Ah(\star,H)\cong H$ as heaps, $H$ is a left $\eE(H)$-module by evaluation.
\end{enumerate}
\end{example}

\begin{definition}\label{def:induced} 
Let $M$ be a left $T$-module. For $e \in M$, the action $\triangleright_e\colon  T \times M \lra M$ given by
\[t\ \triangleright_e\  m \coloneqq [t\cdot m,t\cdot e,e] \qquad \textrm{for all }m\in M, t \in T\]
is called the \emph{$e$-induced action} or the \emph{$e$-induced module structure} on $M$. We say that a subset $N\subseteq M$ is {\em an induced submodule} of $M$ if $N$ is a non-empty sub-heap of $M$ and $t\ \triangleright_e\  n \in N$ for all $t\in T$ and $n,e\in N$.
\end{definition}

Induced modules play a role similar to paragons in trusses. Every congruence class of a $T$-module $M$ is an induced submodule of $M$. A sub-heap relation corresponding to an induced submodule of $M$ is a congruence and every congruence arises in that way. Furthermore, for any epimorphism (that is, surjective morphism, \cite[Proposition 2.6]{BrzRyb:fun}) of $T$-modules $\pi:M\lra N$, $\pi^{-1}(n)\subseteq M$ is an induced submodule of $M$, for all $n\in N$, and $N\cong M/\pi^{-1}(n)$.

Recall also from \cite[Lemma 4.29]{Brz:par} that the induction procedure of Definition \ref{def:induced} stabilises after the first step. That is, if
\begin{equation}\label{eq:indchng}
t~{\triangleright_{f}}_{e} ~ m \coloneqq [t \triangleright_{f} m, t \triangleright_{f} e, e],
\end{equation}
then $t~{\triangleright_{f}}_{e} ~ m = t \triangleright_{e} m$ for all $t \in T$, $e,f,m \in M$.

\subsection{Isotropic and contractible modules}\label{ssec:isocontr}

In this subsection we introduce the notions of isotropy and contracting paragons for a module over a truss. They are preliminary notions to the introduction of isotropy and contracting paragons for heaps of modules \S\ref{ssec:UZprop} and their connection with affine modules over rings and trusses \S\ref{sec:aff}.

\begin{lemma}\label{lem:stab}
Let $T$ be a truss. For any left $T$-module $M$, the set
\[
\mathrm{Stab}(M) \coloneqq \{u\in T\; |\; u\cdot m = m, \; \mbox{for all $m\in M$}\}
\] 
is a sub-truss and, if non-empty, a paragon of $T$, called the \emph{stabilizer} or \emph{isotropy paragon}  of $M$.
\end{lemma}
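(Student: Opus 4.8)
The plan is to verify the three required closure properties directly from the module axioms, with no ingredient needed beyond the Mal'cev identities \eqref{eq:malcev}; the whole argument is a sequence of one-line computations, each an application of a single module axiom followed by a Mal'cev collapse.

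First I would show that $\mathrm{Stab}(M)$ is a sub-heap. Given $u,v,w \in \mathrm{Stab}(M)$ and any $m \in M$, the second module axiom gives $[u,v,w]\cdot m = [u\cdot m, v\cdot m, w\cdot m] = [m,m,m]$, and $[m,m,m]=m$ by \eqref{eq:malcev}; hence $[u,v,w]\in \mathrm{Stab}(M)$. Next I would check closure under multiplication, so that $\mathrm{Stab}(M)$ is a sub-truss (note this requires no non-emptiness hypothesis, as the empty set is admitted as a sub-truss). For $u,v\in\mathrm{Stab}(M)$ the first module axiom yields $(uv)\cdot m = u\cdot(v\cdot m) = u\cdot m = m$ for all $m$, whence $uv\in\mathrm{Stab}(M)$.

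Finally, assuming $\mathrm{Stab}(M)$ non-empty, I would verify the two paragon conditions. For $p,q\in\mathrm{Stab}(M)$ and $t\in T$, the second module axiom gives
\[
[tp, tq, q]\cdot m = [(tp)\cdot m,\; (tq)\cdot m,\; q\cdot m].
\]
Using the first module axiom, $(tp)\cdot m = t\cdot(p\cdot m) = t\cdot m$ and likewise $(tq)\cdot m = t\cdot m$, while $q\cdot m = m$; so the right-hand side is $[t\cdot m,\, t\cdot m,\, m]$, which equals $m$ by \eqref{eq:malcev}. The right-handed condition is symmetric: one computes $(pt)\cdot m = p\cdot(t\cdot m) = t\cdot m$ and $(qt)\cdot m = q\cdot(t\cdot m) = t\cdot m$, so that $[pt,qt,q]\cdot m = [t\cdot m,\, t\cdot m,\, m] = m$. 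Both bracketed elements therefore lie in $\mathrm{Stab}(M)$, confirming the paragon property.

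I do not anticipate any genuine obstacle: every step reduces to an application of a module axiom followed by a Mal'cev identity of the form $[x,x,y]=y$. The only points requiring a little care are to apply the associativity axiom in the correct order when rewriting $(tp)\cdot m = t\cdot(p\cdot m)$, and, in the right-handed case, to use that $p$ (and $q$) stabilise \emph{every} element of $M$, in particular the element $t\cdot m$, rather than only $m$ itself.
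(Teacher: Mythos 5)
Your proof is correct, but it takes a more elementary route than the paper. The paper's argument is structural: it observes that $\mathrm{Stab}(M)$ is precisely the preimage $\phi^{-1}(\id_M)$ of the identity endomorphism under the truss homomorphism $\phi\colon T\lra \eE(M)$, $\phi(t)(m)=t\cdot m$, and then invokes the general fact (Remark~\ref{rem:para}) that for any truss morphism $f$ and any $e\in\im f$, the preimage $f^{-1}(e)$ is a paragon; closure under multiplication follows because $\id_M$ is idempotent in $\eE(M)$. You instead verify all three closure properties (sub-heap, multiplication, and both paragon conditions) by direct computation from the module axioms and the Mal'cev identities, and every step checks out --- including the point you rightly flag, that in the right-handed paragon condition one needs $p$ and $q$ to stabilise $t\cdot m$ and not merely $m$. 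What your approach buys is self-containedness: it needs no external lemma about preimages under truss morphisms, and it makes visible exactly which axioms are doing the work. What the paper's approach buys is brevity and a conceptual explanation --- it exhibits the stabilizer as an instance of a general phenomenon (paragons as fibres of truss morphisms), a pattern it reuses immediately afterwards for the annihilator in Lemma~\ref{lem:annMod}. Both proofs are complete and correct.
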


\begin{proof}
The set $\mathrm{Stab}(M)$ is the inverse image of the identity endomorphism under the truss homomorphism $\phi:T\lra \eE(M)$, $\phi(t)(m)=t\cdot m$, and hence, if non-empty, a paragon in $T$ by Remark~\ref{rem:para}. Clearly, $\mathrm{Stab}(M)$ is closed under the multiplication (this can also be seen from the fact that it is an inverse image of an idempotent element of $\eE(M)$).
\end{proof}

\begin{definition}\label{def:isotropy}
A $T$-module $M$ is said to be \emph{isotropic} if $\stab(M)$ is non-empty. Isotropic $T$-modules form the full subcategory $T\Mod_{\mathbf{is}}$ of $T\Mod$.\label{page-iso-t-mod}
\end{definition}

\begin{proposition}\label{prop:unisim}
Let $M$ be a $T$-module. Then:
\begin{enumerate}[leftmargin=0.8cm]
\item\label{prop:unisim:eq1} For all $e\in M$, $ \stab(M)\subseteq \stab(M,\triangleright_e)$. 
\item\label{prop:unisim:eq2bis} If $f\colon M \lra N$ is a morphism of $T$-modules, then $\stab(M) \subseteq \stab(f(M))$.
\item\label{prop:unisim:eq2}
For all $e,f\in M$, $\stab(M,\triangleright_e) = \stab(M,\triangleright_f)$.
\item\label{prop:unisim:eq3} If $u\in\stab(M,\triangleright_e)$, then $[u,ut,t]\in \stab(M)$ for all $t \in T$. Hence, if $\stab(M,\triangleright_e)\neq \varnothing$, then  $\stab(M)\neq \varnothing$.
\item\label{prop:unisim:eq4} If $T$ is a truss with identity, then $M$ is a unital module if and only if $\stab(M,\triangleright_e)$ is a unital sub-truss of $T$.
\end{enumerate}
\end{proposition}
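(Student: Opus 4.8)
The plan is to treat the five items almost independently: items (1), (2) and (4) are direct manipulations with the module axioms and the heap identities \eqref{eq:assoc}--\eqref{eq:malcev}; item (3) then follows formally from (1) together with the stabilisation property \eqref{eq:indchng}; and item (5) combines (4) with Lemma~\ref{lem:stab}.

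For (1), if $u\in\stab(M)$ then $u\cdot m=m$ for every $m\in M$, and in particular $u\cdot e=e$, so
\[
u\triangleright_e m=[u\cdot m,u\cdot e,e]=[m,e,e]=m
\]
by \eqref{eq:malcev}; hence $u\in\stab(M,\triangleright_e)$. For (2) I would use only the $T$-linearity of $f$: since $f(M)$ is a $T$-submodule of $N$, every one of its elements has the form $f(m)$, and if $u\in\stab(M)$ then $u\cdot f(m)=f(u\cdot m)=f(m)$, so $u\in\stab(f(M))$.

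Item (4) is the computational heart. Expanding by the compatibility of the action with the heap operation of $T$ and by associativity of the action,
\[
[u,ut,t]\cdot m=[u\cdot m,(ut)\cdot m,t\cdot m]=[u\cdot m,\,u\cdot(t\cdot m),\,t\cdot m].
\]
Writing $n:=t\cdot m$ and using the hypothesis $u\triangleright_e x=x$ for all $x$ (so that $n=[u\cdot n,u\cdot e,e]$), the brackets collapse:
\[
[u\cdot m,u\cdot n,n]=[u\cdot m,u\cdot n,[u\cdot n,u\cdot e,e]]\overset{\eqref{eq:assoc}}{=}[[u\cdot m,u\cdot n,u\cdot n],u\cdot e,e]\overset{\eqref{eq:malcev}}{=}[u\cdot m,u\cdot e,e]=m.
\]
Hence $[u,ut,t]\cdot m=m$ for all $m$, i.e. $[u,ut,t]\in\stab(M)$. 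Since a non-empty $\stab(M,\triangleright_e)$ is contained in $T$ and so forces $T\neq\varnothing$, choosing any $t\in T$ then exhibits an element $[u,ut,t]$ of $\stab(M)$, which is therefore non-empty.

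For (3) I would avoid a direct computation and instead apply item (1) to the $T$-module $(M,\triangleright_f)$ and the element $e$: this gives $\stab(M,\triangleright_f)\subseteq\stab\big((M,\triangleright_f),\triangleright_e\big)$, and by \eqref{eq:indchng} the $e$-induced action of $(M,\triangleright_f)$ is exactly $\triangleright_e$, so the right-hand side equals $\stab(M,\triangleright_e)$. Exchanging the roles of $e$ and $f$ gives the opposite inclusion, whence equality. Finally, for (5): if $M$ is unital then $1_T\cdot m=m$, so $1_T\triangleright_e m=[m,e,e]=m$ and $1_T\in\stab(M,\triangleright_e)$; as this set is a sub-truss by Lemma~\ref{lem:stab} and contains $1_T$, it is a unital sub-truss. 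Conversely, if $\stab(M,\triangleright_e)$ is a unital sub-truss with identity $w$, then $w$ is idempotent, so item (4) with $t=w$ gives $[w,ww,w]\in\stab(M)$; as $ww=w$ and hence $[w,ww,w]=w$ by \eqref{eq:malcev}, we obtain $w\in\stab(M)$, i.e. $\phi(w)=\id$ in $\eE(M)$. Since $1_Tw=w$ in $T$, the truss homomorphism $\phi\colon T\to\eE(M)$ then yields $\phi(1_T)=\phi(1_T)\phi(w)=\phi(1_Tw)=\phi(w)=\id$, so $M$ is unital. I expect the only genuine subtlety to lie in this converse: one must force the abstract unit $w$ of the sub-truss to act as the identity on all of $M$, and the description of the action via $\phi$ into $\eE(M)$, combined with item (4), is precisely what delivers this.
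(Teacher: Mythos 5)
Your proof is correct, and in three of the five items it departs from the paper's route in ways worth noting. For item (3) the paper deduces the equality $\stab(M,\triangleright_e)=\stab(M,\triangleright_f)$ from item (2) together with the fact that the translation $\tau_e^f\colon(M,\triangleright_e)\to(M,\triangleright_f)$ is an isomorphism of $T$-modules, whereas you obtain it by applying item (1) to the module $(M,\triangleright_f)$ and invoking the stabilisation property \eqref{eq:indchng}; both arguments are valid and of comparable length, yours having the mild advantage of not needing the cited isomorphism. For item (4) the paper first records the identity $u\cdot m=[m,e,u\cdot e]$ and substitutes it into the expanded bracket, while you substitute $n=[u\cdot n,u\cdot e,e]$ into the third slot and collapse via \eqref{eq:assoc} and \eqref{eq:malcev}; these are equivalent rearrangements of the same computation. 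The most substantive difference is in the converse of item (5): the paper simply assumes $1_T\in\stab(M,\triangleright_e)$ and applies item (4) with $t=1_T$, which implicitly reads ``unital sub-truss'' as ``sub-truss containing $1_T$,'' whereas you allow the sub-truss to carry an abstract unit $w$, use item (4) with $t=w$ and the idempotency of $w$ to get $w\in\stab(M)$, and then force $\phi(1_T)=\phi(1_T w)=\phi(w)=\id$ through the representing truss morphism $\phi\colon T\to\eE(M)$. Your version therefore covers the literal reading of the statement (a sub-truss that is unital in its own right) and is strictly more general than the printed proof; everything in it checks out.
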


\begin{proof}
\eqref{prop:unisim:eq1} Let $u\in \stab(M)$. Then
$$
u\triangleright_e m=[u\act m,u\act e,e]=[m,e,e]=m
$$
for all $m\in M$. Thus, $u\in \stab(M,\triangleright_e)$.

\eqref{prop:unisim:eq2bis} For every $u \in \stab(M)$ and every $m \in M$ we have $u \cdot f(m) = f(u \cdot m) = f(m)$.

\eqref{prop:unisim:eq2} 
It follows from \eqref{prop:unisim:eq2bis} after recalling that $\tau_e^f\colon (M,\triangleright_e) \lra (M,\triangleright_f)$ is an isomorphism of $T$-modules (see \cite[Proposition 4.28]{Brz:par}).

\eqref{prop:unisim:eq3}  
Consider the heap homomorphism $E\colon T \lra M$, $t\lto t\cdot e$. 
For all $u\in \stab(M,\triangleright_e)$ and $m\in M$ we have $m = u\triangleright_em = [u\cdot m,u\cdot e,e]$ and hence
\begin{equation}\label{eq:sim-eq01bis}
u\cdot m =[m,e,E(u)] = [E(u),e,m].
\end{equation}
Therefore, 
$$[u,ut,t]\act m = \big[u\act m,ut\act m,t\act m\big] \stackrel{\eqref{eq:sim-eq01bis}}{=} \big[m,e,E(u),E(u),e,t\act m,t\act m\big] = m.
$$
Thus, $[u,ut,t]\in \stab(M)$ as required.

\eqref{prop:unisim:eq4} If $T$ has a unit $1_T$ and $M$ is unital, then $1_T\in \stab(M)$ and by statement \eqref{prop:unisim:eq1}  $1_T\in \stab(M,\triangleright_e)$. In the opposite direction if $1_T\in \stab(M,\triangleright_e)$, then by statement \eqref{prop:unisim:eq3} we have that $1_T = [1_T, 1_T^2,1_T]\in \stab(M)$ and hence  $M$ is a unital module. 
\end{proof}

\begin{remark}
Consider the restriction
\begin{equation}\label{eq:mapE}
\stab(M,\triangleright_e) \lra M, \qquad u\lto u\act e.
\end{equation}
of the heap morphism $E\colon T \lra M$ from the proof of Proposition \ref{prop:unisim}\eqref{prop:unisim:eq3}. By \eqref{eq:sim-eq01bis}, for all $u_1,u_2\in \stab(M,\triangleright_e)$ we have
\[
E(u_1u_2)=(u_1u_2)\act e=u_1\act E(u_2)=[E(u_1),e,E(u_2)]
\]
and hence the map \eqref{eq:mapE} is a truss homomorphism between $\stab(M,\triangleright_e)$ and the trivial brace $\mathrm{G}(M;e)$ (any abelian group $(G,+,0)$ is a brace with $m\cdot n \coloneqq m + n$).
\end{remark}

\begin{example}
The reverse inclusion in Proposition \ref{prop:unisim}\eqref{prop:unisim:eq1} does not necessarily hold. For example, take $T = \ZZ$ with multiplication $m \cdot n \coloneqq m + n$ and $M = T$ itself with the regular action. Then
$\stab(M) = \{0\}$ and $\stab(M,\triangleright_e) = \ZZ$, for all $e\in \ZZ$.
\end{example}

Recall that an \emph{absorber} for a $T$-module structure $(M,\cdot)$ on an abelian heap $M$ (or, simply, an absorber) is an element $e \in M$ such that $t \cdot e = e$ for all $t\in T$. By an absorber in $T$ we mean a (necessarily unique) two-sided absorber $0_T \in T$, that is, $t\, 0_T = 0_T = 0_T\, t$ for all $t \in T$.

\begin{lemma}\label{lem:absorber}
An element $e \in M$ is an absorber if and only if $t \triangleright_e m = t \cdot m$ for all $t \in T$ and $m \in M$. In particular, a $T$-module $(M, \cdot)$ admits an absorber $e$ if and only if $(M,\cdot) = (M, \triangleright_e)$.
\end{lemma}

\begin{proof}
Since $e$ is always an absorber for the $e$-induced action, if $t \triangleright_e m = t \cdot m$, then $e$ is an absorber in $M$. Conversely, if $e$ is an absorber in $M$ then
\begin{equation*}
t \triangleright_e m = [t \cdot m, t \cdot e, e] = [t \cdot m,e,e] = t\cdot m.\qedhere
\end{equation*}
\end{proof}

The notion of a stabilizer is complemented by that of an annihilator. Unlike the former, the latter is associated to an element and its nature depends on the nature of this element.

{
\begin{lemma}\label{lem:annMod}
Let $T$ be a truss and let $M$ be a non-empty $T$-module. 
For any $e \in M$, if the set
\[
\mathrm{Ann}_e(M) \coloneqq \big\{z\in T \mid z \cdot m = e, \textrm{ for all } m\in M\big\}
\]
is non-empty, then $\mathrm{Ann}_e(M)$ is a paragon and a right ideal in $T$. If, moreover, $e \in M$ is an absorber, then $\mathrm{Ann}_e(M)$ is a two-sided ideal in $T$.
\end{lemma}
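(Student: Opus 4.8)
The plan is to check, in turn, that $\mathrm{Ann}_e(M)$ is a sub-heap of $T$, a right ideal, and a paragon, and finally that it upgrades to a two-sided ideal once $e$ is assumed to be an absorber. The only tools required are the two module axioms $[t,t',t'']\cdot m=[t\cdot m,t'\cdot m,t''\cdot m]$ and $(tt')\cdot m = t\cdot(t'\cdot m)$, together with the Mal'cev identities of the heap $M$. Non-emptiness is assumed, so to establish the sub-heap property it suffices to verify closure under the ternary operation.

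First I would show closure: for $z_1,z_2,z_3\in\mathrm{Ann}_e(M)$ and any $m\in M$, the second module axiom gives $[z_1,z_2,z_3]\cdot m=[z_1\cdot m,z_2\cdot m,z_3\cdot m]=[e,e,e]=e$, so that $[z_1,z_2,z_3]\in\mathrm{Ann}_e(M)$ and $\mathrm{Ann}_e(M)$ is a (non-empty) sub-heap. For the right-ideal property, take $z\in\mathrm{Ann}_e(M)$ and $t\in T$; associativity of the action yields $(zt)\cdot m=z\cdot(t\cdot m)=e$ for every $m\in M$, because $t\cdot m$ is again an element of $M$ and $z$ annihilates all of $M$. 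Hence $zt\in\mathrm{Ann}_e(M)$.

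For the paragon property I must verify both $[pt,qt,q]\in\mathrm{Ann}_e(M)$ and $[tp,tq,q]\in\mathrm{Ann}_e(M)$ for all $p,q\in\mathrm{Ann}_e(M)$ and $t\in T$. The first is immediate: $pt$ and $qt$ lie in $\mathrm{Ann}_e(M)$ by the right-ideal property just proved, so $[pt,qt,q]$ belongs to $\mathrm{Ann}_e(M)$ since the latter is a sub-heap. The second condition is the only genuinely non-formal step, and I would handle it by the direct computation $[tp,tq,q]\cdot m=[tp\cdot m,tq\cdot m,q\cdot m]=[t\cdot e,t\cdot e,e]=e$, where the final equality is the Mal'cev identity $[b,b,a]=a$. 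It is worth stressing that this argument needs no assumption on $e$, so $\mathrm{Ann}_e(M)$ is a paragon and a right ideal in full generality.

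Finally, assuming $e$ is an absorber, the left-ideal property follows from $(tz)\cdot m=t\cdot(z\cdot m)=t\cdot e=e$ for all $z\in\mathrm{Ann}_e(M)$, $t\in T$ and $m\in M$, the last equality being precisely the absorption property $t\cdot e=e$. Together with the right-ideal property this shows $\mathrm{Ann}_e(M)$ is a two-sided ideal. I do not expect any real obstacle: the entire argument reduces to tracking which identity forces the output value to be $e$ — a Mal'cev identity in the paragon computation and the absorption hypothesis in the two-sided ideal computation — and the place to be careful is simply ensuring that the left paragon condition $[tp,tq,q]$ collapses to $e$ without invoking absorption.
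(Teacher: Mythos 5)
Your proof is correct, but it takes a different route from the paper's. You verify everything by direct elementwise computation: closure under the heap bracket via the distributivity axiom, the right-ideal property via associativity of the action, the left paragon condition $[tp,tq,q]\cdot m=[t\cdot e,t\cdot e,e]=e$ via the Mal'cev identity, and the left-ideal property from the absorption hypothesis. The paper instead works one level up: it passes to the truss homomorphism $\phi\colon T\to \eE(M)$, $\phi(t)(m)=t\cdot m$, observes that $\mathrm{Ann}_e(M)=\phi^{-1}(0_e)$ where $0_e$ is the constant map at $e$, and then invokes the general facts that preimages of elements under truss morphisms are paragons, that preimages of left absorbers in the codomain are right ideals, and that preimages of (two-sided) absorbers are two-sided ideals (citing \cite[Lemma 3.21]{Brz:par} and \cite[Lemma 3.27]{Brz:par}). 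Your approach is self-contained and makes visible exactly which axiom forces each output to equal $e$ — in particular your observation that the condition $[tp,tq,q]\in\mathrm{Ann}_e(M)$ needs only a Mal'cev identity and no hypothesis on $e$ is precisely the point the paper's argument hides inside the cited lemmas. The paper's approach buys brevity and reusability: once $\mathrm{Ann}_e(M)$ is recognized as a preimage, all three claims follow from the structure theory of paragons and ideals without any computation. Both are complete proofs of the statement.
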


\begin{proof} 
For every $e \in M$ consider the constant map $0_e\colon M \lra M, m \lto e,$ in $\eE(M)$ and the truss homomorphism $\phi:T\lra \eE(M)$, $\phi(t)(m)=t\cdot m$. If $\mathrm{Ann}_e(M) \not=\varnothing$, then $0_e \in \phi(T)$,  $\mathrm{Ann}_e(M)$ is the inverse image of $0_e$ and hence it is a paragon by \cite[Lemma 3.21]{Brz:par}. Since $0_e\circ f = 0_e$, i.e.\ $0_e$ is a left absorber in $\eE(M)$ in the terminology of \cite[Remark~3.13]{Brz:par}, $\mathrm{Ann}_e(M)$ is also a right ideal (adapt \cite[Lemma 3.27]{Brz:par}). If $e \in M$ is also an absorber in $M$, then $0_e$ is an absorber in $\phi(T) \subseteq \eE(M)$ and hence $\mathrm{Ann}_e(M)$ is a two-sided ideal by \cite[Lemma 3.27]{Brz:par}.
\end{proof}

\begin{definition}\label{def:annMod}
Let $T$ be a truss and let $M$ be a $T$-module.
For any $e \in M$, the set $\mathrm{Ann}_e(M)$ is called the \emph{$e$-annihilator} or \emph{$e$-contracting paragon} of $M$.

A $T$-module $M$ is said to be {\em $e$-contractible} if $\mathrm{Ann}_e(M)$ is non-empty.
\end{definition}
}

{
To conclude the subsection and in analogy with Examples \ref{ex:heaps} and \ref{ex:truss}, the subsequent results are aimed at showing that modules over trusses can be understood as equivalence classes of congruences on modules over rings.

\begin{lemma}\label{lem:EquiClIFFIndSubm}
Let $R$ be a ring and let $\tT(R)$ be the associated truss. A subset $S$ of an $R$-module $M$ is an equivalence class of a congruence modulo an $R$-submodule $N$ if and only if it is an induced $\tT(R)$-submodule of the $\tT(R)$-module $\tT(M)$ as in \cite[\S4]{BrzRyb:mod} or Example \ref{ex:modules}\eqref{ex:assomodule}.
\end{lemma}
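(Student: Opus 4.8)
The plan is to prove a set-theoretic equality between two classes of subsets of an $R$-module $M$: equivalence classes of congruences modulo an $R$-submodule $N$, and induced $\tT(R)$-submodules of $\tT(M)$. First I would unpack both notions explicitly. An equivalence class modulo an $R$-submodule $N$ is simply a coset $m_0 + N$ for some $m_0 \in M$; by the observations in Example~\ref{ex:heaps}\ref{item:exheaps3}, such a coset is precisely a non-empty sub-heap of $\hH(M)$ whose translation subgroup is $N$. On the other side, an induced $\tT(R)$-submodule $S$ is, by Definition~\ref{def:induced}, a non-empty sub-heap of $\tT(M)$ closed under the operation $r \triangleright_e s = [r\cdot s, r\cdot e, e] = r\cdot s - r\cdot e + e = r(s-e)+e$ for all $r \in R$ and $s,e \in S$.

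For the forward direction I would take $S = m_0 + N$ and verify it is an induced submodule. It is a non-empty sub-heap since it is a coset; for the induced-action closure, pick $s = m_0 + n_1$, $e = m_0 + n_2$ with $n_1, n_2 \in N$, and compute $r \triangleright_e s = r(s-e) + e = r(n_1 - n_2) + m_0 + n_2$, which lies in $m_0 + N$ because $r(n_1-n_2) + n_2 \in N$ by the $R$-submodule property of $N$. For the converse, given a non-empty induced submodule $S$, fix any $e \in S$ and set $N \coloneqq \{s - e \mid s \in S\} = \tau_e^0(S)$, the translate of $S$ bringing $e$ to $0$. The task is to show $N$ is an $R$-submodule, after which $S = e + N$ is automatically the corresponding coset.

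The verification that $N$ is an $R$-submodule is the technical heart. Closure of $N$ under the abelian group operation follows from $S$ being a sub-heap: for $s, s' \in S$ one has $[s, e, s'] \in S$, and $[s,e,s'] - e = (s-e) + (s'-e)$, giving closure under addition; closure under negation and containment of $0$ follow similarly from the Mal'cev identities. The genuinely new ingredient is closure under the $R$-action: for $s \in S$ and $r \in R$, the induced-submodule axiom gives $r \triangleright_e s = r(s-e) + e \in S$, so $(r\triangleright_e s) - e = r(s-e) \in N$; since every element of $N$ has the form $s - e$, this says exactly $r N \subseteq N$. I expect this last step to be the main obstacle only in the sense of being careful that the element $e$ used to form $N$ is the same $e$ serving as the middle term of the induced action — the stabilisation property recorded in \eqref{eq:indchng} guarantees the induced structure does not depend on further choices, so any $e \in S$ works and $N$ is well defined independently of it. Finally I would note that the two constructions, $N \mapsto m_0 + N$ and $S \mapsto \{s-e\}$, are mutually inverse, closing the equivalence; this is the affine analogue of the correspondence between cosets and submodules already exploited in Examples~\ref{ex:heaps} and~\ref{ex:truss}.
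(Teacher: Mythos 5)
Your proposal is correct and follows essentially the same route as the paper: the forward direction is the direct computation $r\triangleright_e(m_0+n_1)=m_0+\bigl(r\cdot n_1-r\cdot n_2+n_2\bigr)\in m_0+N$, and the converse sets $N=\tau_e^0(S)=S-e$ and deduces $r\cdot N\subseteq N$ from $r\cdot(s-e)=r\triangleright_e s-e\in S-e$, exactly as in the paper. The only cosmetic difference is that you spell out the subgroup verification and the independence of $N$ from the choice of $e$ (which the paper leaves implicit, and for which the citation of \eqref{eq:indchng} is not really needed: $S-e=S-e'$ already because $S-e$ is a subgroup containing $e'-e$).
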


\begin{proof}
If $S = m + N$ for some $m \in M$ and some $R$-submodule $N \subseteq M$, then we already know that $S$ is a sub-heap of $\hH(M)$ and moreover
\[r \triangleright_m(m + n) = \big[r\cdot (m+n),r\cdot m,m\big] = r\cdot m + r\cdot n -  r\cdot m + m = m + r\cdot n \in m + N,\]
for all $n \in N$, whence $R\triangleright_m S \subseteq S$ and thus $S$ is an induced submodule. 

Conversely, if $S \subseteq \tT(M)$ is an induced submodule, then for an arbitrary $s\in S$ we can consider $N \coloneqq \tau_s^0(S) = S - s$. We claim that $N \subseteq M$ is an $R$-submodule. In fact, 
\[r\cdot (s'- s) = r\cdot s' - r\cdot s + s - s = r \triangleright_s s' - s \in S - s\]
for all $ r\in R$ and hence $r \cdot N \subseteq N$. The assertion follows by observing that $S = s + N$.
\end{proof}
}

\begin{proposition}
Given a truss $T$, any non-empty $T$-module can be realised as an equivalence class of a congruence modulo a submodule in a module over a ring.
\end{proposition}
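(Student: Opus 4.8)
The plan is to prove this as the module-theoretic counterpart of the truss statement in Example~\ref{ex:truss}, with Lemma~\ref{lem:EquiClIFFIndSubm} playing the role that the paragon/residue-class correspondence plays there. Concretely, starting from a non-empty $T$-module $M$, I would first attach to $T$ the ring $R=\gG(T\boxplus\star;*)$ of Example~\ref{ex:truss}, together with the truss embedding $\iota_T\colon T\to\tT(R)$ realising $T=e+I$ for a chosen $e\in T$ and the two-sided ideal $I=\tau_e^*(T)$; recall also that $R$ is generated as an abelian group by the image of $T$ (this follows from the adjunction $\gG(-\boxplus\star;*)\dashv\hH$, cf.\ \cite{Ryb-Free,AndBrzRyb:ext}). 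I would then build an $R$-module $P$ out of $M$ and exhibit $M$ as one residue class of $P$ modulo an $R$-submodule, in such a way that restricting the $R$-action along $\iota_T$ returns the original $T$-action — exactly as, in Example~\ref{ex:truss}, the ring multiplication of $R$ restricts on the residue class $e+I$ to the original truss multiplication.

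For the module $P$, fix $e_M\in M$ and set $P \coloneqq \gG(M\boxplus\star;*)$, the abelian group associated to the heap $M$ as in Example~\ref{ex:heaps}\ref{item:exheaps3}, with $M$ included as a sub-heap and $*=0_P$. Each $t\in T$ acts on $M$ as a heap endomorphism (module axiom $t\cdot[m,n,e]=[t\cdot m,t\cdot n,t\cdot e]$), which by Proposition~\ref{prop:AhAb}, applied to the basepoint-preserving heap map $M\boxplus\star\to\hH(P)$ sending $*$ to $0_P$, extends uniquely to a group endomorphism $\psi(t)$ of $P$. The remaining axioms say precisely that $\psi\colon T\to\tT(\End{}{P})$ is a homomorphism of trusses: the heap-linearity axiom $[t,t',t'']\cdot m=[t\cdot m,t'\cdot m,t''\cdot m]$ gives that $\psi$ is a heap map, and associativity $t\cdot(t'\cdot m)=(tt')\cdot m$ gives multiplicativity on $M$. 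Invoking the universal property of $R$ from \cite{AndBrzRyb:ext} (or, by hand, extending $\psi$ additively via the adjunction and checking multiplicativity on the generating set $T$) this extends to a ring homomorphism $\bar\psi\colon R\to\End{}{P}$, i.e.\ an $R$-module structure on $P$ with $t\cdot_R a=t\cdot a$ for all $t\in T$, $a\in M$.

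Next I would set $N\coloneqq\tau_{e_M}^*(M)=M-e_M$, which by Example~\ref{ex:heaps}\ref{item:exheaps3} is a subgroup of $P$ with $M=e_M+N$. To see that $N$ is an $R$-submodule it suffices, since $\bar\psi$ is additive and $T$ generates $R$ as an abelian group, to check that $\psi(t)(N)\subseteq N$ for each $t\in T$; indeed, for $a\in M$ one computes $\psi(t)(a-e_M)=t\cdot a-t\cdot e_M=[t\cdot a,t\cdot e_M,e_M]-e_M=(t\triangleright_{e_M}a)-e_M\in N$, because the induced action keeps $M$ inside itself. Thus $M=e_M+N$ is an equivalence class of the congruence modulo the $R$-submodule $N$ in $P$; its inherited heap operation is the original one, and because $t\cdot_R a=t\cdot a\in M$ for all $t\in T$ the class is stable under the action of $T\subseteq\tT(R)$, which restricts on it to the original $T$-action. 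This is the exact module analogue of the situation in Example~\ref{ex:truss}, and Lemma~\ref{lem:EquiClIFFIndSubm} is the module counterpart of the paragon/residue-class correspondence underlying it.

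The main obstacle I expect is the middle step: producing a bona fide $R$-module structure on $P$ extending the $T$-action. Extending additively is immediate from the adjunction, but verifying that $\bar\psi$ is multiplicative on all of $R$ (and unital, when $T$ and $M$ are, so that $1_T=1_R$ acts as $\id_P$ and $P$ becomes a unital $R$-module) is the delicate point, and it is exactly here that the truss axioms on $T$ and the module axioms on $M$ are used. Once this is in place, the passage to the residue class $M=e_M+N$ and the verification that $N$ is an $R$-submodule are routine consequences of Example~\ref{ex:heaps}\ref{item:exheaps3} and the additivity of $\bar\psi$ together with the fact that $T$ generates $R$.
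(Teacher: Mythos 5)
Your proposal is correct and follows essentially the same route as the paper: both pass to the abelian groups $\gG(M\boxplus\star;*)$ and $\gG(T\boxplus\star;*)$, extend the $T$-action to a ring action via the universal property of the coproduct with $\star$ (the paper phrases the multiplicativity check you flag as "delicate" exactly as coincidence of the truss morphisms $\lambda_{ts}$ and $\lambda_t\circ\lambda_s$ on the two components of the coproduct, followed by Corollary~\ref{cor:mapsheaps}), and then exhibit $M$ as the coset $e_M+N$ of the submodule $N=M-e_M$. Your direct verification that $N$ is an $R$-submodule is precisely the content of Lemma~\ref{lem:EquiClIFFIndSubm}, which the paper invokes after checking that $M$ is an induced submodule.
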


\begin{proof}
Let $M$ be a non-empty $T$-module. We already know from Example \ref{ex:heaps}\ref{item:exheaps3} that $\mathrm{Gr}_*(M) = \gG(M \boxplus \star; *)$ is an abelian group. Denote it by $\rR(M)$. \label{rR(M)} By \cite[\S3]{BrzRyb:mod}, $M \boxplus \star$ is a $T$-module as follows.
For every $t \in T$, we can consider morphisms of heaps
\[M \lra M \boxplus \star, \quad m \lto t \cdot m, \qquad \text{and} \qquad \star \lra M \boxplus \star, \quad * \lto *.\]
By the universal property of the coproduct, there exists a unique morphism of heaps $\lambda_t\colon M \boxplus \star \lto M \boxplus \star$ extending them. Furthermore, since for all $r,s,t \in T$ the morphisms of heaps $\lambda_{[t,r,s]}$ and $\big[\lambda_t,\lambda_r,\lambda_s\big]$ coincide on $M$ and on $\star$, they coincide on $M \boxtimes \star$, and analogously for the morphisms $\lambda_{ts}$ and $\lambda_t \circ \lambda_s$. Therefore, we constructed a morphism of trusses 
\[T \lra \ahrd({M \boxplus \star}),\]
which makes of $M \boxplus \star$ a $T$-module. Proceeding further, we can consider the obvious $\star$-modules structure 
\[\star \lra \ahrd({M \boxplus \star}), \qquad * \lto [x \lto *]\]
and hence the heap (truss, in fact) homomorphism
\begin{equation}\label{eq:modring}
T \boxplus \star  \lra \ahrd({M \boxplus \star})
\end{equation}
given by the universal property of the coproduct. Denote by $\rR(T)$ the abelian group $\gG(T \boxplus \star)$ with the ring structure coming from Example \ref{ex:truss}. Since every endomorphism in the image of \eqref{eq:modring} preserves the neutral element $*$, it follows from Corollary \ref{cor:mapsheaps} that we constructed a ring homomorphism
\[\rR(T) \lra \Ab({\rR(M)}),\]
so that $\rR(M)$ is an $\rR(T)$-module. 
The canonical map $M \to M \boxtimes \star$ of the coproduct allows us to realise $M$ as a sub-heap of $\hH(\rR(M))$ and since
\[t \triangleright_e m = [t\cdot m,t\cdot e,e]_{\rR(M)} = [t\act m,t\act e,e]_{M} \in M,\]
where subscripts indicate in which set the heap operations are taken, and 
\[* \triangleright_e m = [* \cdot m,* \cdot e,e]_{\rR(M)} = [*,*,e]_{\rR(M)} = e \in M\]
for all $m,e \in M$, $t \in T$, it follows that $M$ is an induced $\tT(\rR(T))$-submodule of $\rR(M)$ and hence an equivalence class of a congruence modulo a submodule ($(-m)+M$, in fact) in the $\rR(T)$-module $\rR(M)$.
\end{proof}

\subsection{Abelian groups with a \texorpdfstring{$T$}{T}-module structure}\label{ssec:Tgrps}

Let $\MAbs$ denote the category whose objects are pairs, a $T$-module and a fixed absorber, and morphisms are $T$-linear maps that preserve the absorbers. \label{p-t-abs}
Note that all the modules in $\MAbs$ are inhabited (they are non-empty). Our aim is to interpret $\MAbs$ as the category of abelian groups with $T$-actions.

\begin{proposition}
The category $\MAbs$ is (isomorphic to) the under category $\star/T\Mod$.
\end{proposition}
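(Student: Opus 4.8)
The plan is to construct an explicit isomorphism of categories between $\MAbs$ and $\star/T\Mod$ and check it is functorial and invertible, mirroring the strategy used for $\grp \cong \star/\heap$ in Proposition~\ref{prop:affine}. The key observation is that an object of the under category $\star/T\Mod$ is a pair $(M, e\colon \star \lra M)$ consisting of a $T$-module $M$ together with a morphism of $T$-modules from the terminal $T$-module $\star$; since $\star = \{*\}$ and such a morphism is determined by the image $e \coloneqq e(*) \in M$, the $T$-linearity condition $e(t \cdot *) = t \cdot e(*)$ forces $t \cdot e = e$ for all $t \in T$, that is, \emph{$e$ is exactly an absorber for $M$}. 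This is precisely the datum of an object of $\MAbs$, so on objects the correspondence $(M, e\colon \star \lra M) \longleftrightarrow (M, e)$ is a bijection.

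First I would spell out this object-level dictionary carefully, emphasising that the only content of the map $e\colon\star\lra M$ being $T$-linear is the absorber condition $t\cdot e = e$. Next I would verify that the two notions of morphism coincide. A morphism in $\star/T\Mod$ from $(M, e)$ to $(N, e')$ is a morphism of $T$-modules $f\colon M \lra N$ making the evident triangle over $\star$ commute, which says exactly $f(e) = e'$; on the other side, a morphism in $\MAbs$ is by definition a $T$-linear map preserving the chosen absorbers, i.e. satisfying $f(e) = e'$. Hence the hom-sets agree on the nose, and composition and identities are inherited from $T\Mod$ in both cases, so the assignment is a genuine functor in each direction with the two composites equal to the respective identities.

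Assembling these into a statement, I would define the functor $\Phi\colon \MAbs \lra \star/T\Mod$ sending $(M, e)$ to $(M, e\colon \star \lra M, * \lto e)$ and acting as the identity on morphisms, and the functor $\Psi$ in the opposite direction sending $(M, e\colon \star \lra M)$ to $(M, e(*))$, again identity on morphisms. The computations above show $\Psi\circ\Phi$ and $\Phi\circ\Psi$ are the identity functors, giving the desired isomorphism of categories.

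The only point requiring genuine (if light) care — and the step I expect to be the main obstacle — is the identification of $T$-linear maps $\star \lra M$ with absorbers, since one must check that the abstract terminal-object description of $\star$ as a $T$-module (Example~\ref{ex:modules}, with $t \cdot * = *$) makes the $T$-linearity of $e$ equivalent to the absorber condition rather than to some weaker or stronger constraint. Everything else is a formal unwinding of the definition of the under category, entirely parallel to Proposition~\ref{prop:affine}, so no serious difficulty is anticipated beyond bookkeeping.
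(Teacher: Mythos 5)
Your proposal is correct and follows exactly the paper's argument: both identify $T\Mod(\star,M)$ with the set of absorbers of $M$ via the terminal-module structure $t\cdot * = *$, and then observe that morphisms in $\star/T\Mod$ and in $\MAbs$ are the same $T$-linear maps preserving the chosen absorber. No difference in substance or structure.
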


\begin{proof}
Recall that $\star=\{*\}$ is a $T$-module with $[*,*,*] = *$ and $t \cdot * = *$ for all $t \in T$. In light of this, for every $T$-module $M$ we have
\[T\Mod(\star,M) = \abs{M} \coloneqq \{m \in M \mid t \cdot m = m \textrm{ for all } t \in T\}.\]
Keeping in mind the foregoing observations, the property is almost tautological. To every object $(M,\cdot,0_M)$ in $\MAbs$ we assign the object $\big((M,\cdot),\star \lra M\colon  * \lto 0_M\big)$ in $\star/T\Mod$ and to every morphism $f \colon  M \lra N$ in $\MAbs$ we assign the morphism $f \colon  M \lra N$ itself in $\star/T\Mod$, and conversely. This gives the desired isomorphism.
\end{proof}

\begin{definition}
Let $T$ be a truss. A {\it $T$-group} is an abelian group $G$ together with an action \mbox{$\cdot\colon T\times G\to G$} of the multiplicative semigroup of $T$ on $G$ such that for all $t,t',t''\in T$ and $g,h\in G$, 
\begin{equation}\label{eq:TGrp}
[t,t',t'']\cdot g = t\cdot g - t'\cdot g + t''\cdot g \qquad  \text{and} \qquad t\cdot (g+h) = t\cdot g+t\cdot h.
\end{equation}
If additionally, there exists $t\in T$ such that $t\cdot g=g$ for all $g\in G$, then we say that $G$ is an {\it isotropic $T$-group}.

A morphism of $T$-groups is by definition a group homomorphism $f \colon  G \to G'$ such that 
$f(t \cdot g) = t \cdot f(g)$
for all $g \in G$ and $t \in T$. For the sake of brevity, we may often call them \emph{$T$-linear group homomorphisms}.

All $T$-groups together with $T$-linear group homomorphisms form the category $T\GMod$. The full subcategory of isotropic $T$-groups is denoted by $T\GMod_{\mathbf{is}}$.\label{p-t-gmod}
\end{definition}

\begin{remark}
An abelian group $G$ is a $T$-group if and only if $\hH(G)$ is a $T$-module and $0_G$ is an absorber.
\end{remark}

\begin{example}\label{ex:easy}
Let us present two elementary examples which will be useful later on.
\begin{enumerate}[leftmargin=0.8cm,ref=(\arabic*)]
\item Let $R$ be a ring. Then any $R$-module $M$ is a $\tT(R)$-group. In particular, all vector spaces over a field $\FF$ are $\tT(\FF)$-groups.
\item\label{item:easy2} Every group $G$ is an isotropic $\star$-group with action $*\cdot g=g$ for all $g\in G$.
\end{enumerate}
\end{example}

\begin{theorem}\label{thm:Tgroups}
The category $\MAbs$ is (isomorphic to) the category $T\GMod$. 
\end{theorem}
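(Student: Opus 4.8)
The plan is to exhibit an explicit isomorphism of categories by unwinding the definitions on both sides. The key structural observation is the one recorded in the remark immediately preceding the statement: an abelian group $G$ is a $T$-group precisely when $\hH(G)$ is a $T$-module with $0_G$ as an absorber. Thus a $T$-group carries exactly the data of an object of $\MAbs$, namely a $T$-module together with a distinguished absorber. I would therefore define a functor $F\colon \MAbs \lra T\GMod$ on objects by sending $(M,\cdot,0_M)$ to the abelian group $\gG(M;0_M)$ (the $0_M$-retract of the heap $M$, which has $0_M$ as neutral element) equipped with the same $T$-action, and a functor $F'\colon T\GMod \lra \MAbs$ in the reverse direction sending a $T$-group $G$ to $(\hH(G),\cdot,0_G)$.

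First I would check that $F$ is well defined on objects. Given $(M,\cdot,0_M)$ in $\MAbs$, the retract $\gG(M;0_M)$ is an abelian group with addition $m+n=[m,0_M,n]$ and neutral element $0_M$. The first identity in \eqref{eq:TGrp} is then exactly axiom (2) of a $T$-module read through the definition of the group operation, namely $[t,t',t'']\cdot g = [t\cdot g, t'\cdot g, t''\cdot g] = t\cdot g - t'\cdot g + t''\cdot g$; the second identity in \eqref{eq:TGrp} is axiom (3), $t\cdot[g,0_M,h]=[t\cdot g, t\cdot 0_M, t\cdot h]=[t\cdot g, 0_M, t\cdot h]$, where one uses that $0_M$ is an absorber so $t\cdot 0_M = 0_M$. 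Associativity of the $T$-action (axiom (1)) gives the action of the multiplicative semigroup. Conversely, for $F'$ one verifies that $\hH(G)$ with $[x,y,z]=x-y+z$ is an abelian heap, that the three $T$-module axioms follow from \eqref{eq:TGrp} together with the action being a semigroup action, and that $0_G$ is an absorber because $t\cdot 0_G = 0_G$ follows from the additivity in the second identity of \eqref{eq:TGrp} (take $g=h=0_G$, so $t\cdot 0_G = t\cdot 0_G + t\cdot 0_G$).

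Next I would treat morphisms. On both sides a morphism is a map that is $T$-linear and respects the distinguished element: in $\MAbs$ it is a $T$-linear heap homomorphism preserving the absorber, in $T\GMod$ it is a $T$-linear group homomorphism (automatically sending $0_G$ to $0_{G'}$). By Proposition~\ref{prop:AhAb} applied with $e=0_M$, $e'=0_N$, a map $f$ is a group homomorphism $\gG(M;0_M)\lra \gG(N;0_N)$ if and only if $f$ is a heap homomorphism with $f(0_M)=0_N$; since $T$-linearity is the same condition on both sides, $F$ and $F'$ act as the identity on underlying maps and send morphisms to morphisms. Functoriality (preservation of identities and composition) is then immediate because both functors leave the underlying set-maps untouched.

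Finally I would confirm that $F$ and $F'$ are mutually inverse, yielding an isomorphism rather than a mere equivalence. On objects this is the statement that $\gG(\hH(G);0_G)=G$ as groups (with equality because $0_G$ is the neutral element, as recorded in \S\ref{ssec:heaps}) and that $\hH(\gG(M;0_M))=M$ as heaps (always an equality), with the $T$-actions and distinguished elements matching on the nose; on morphisms both composites are the identity since neither functor alters the underlying map. I do not expect a genuine obstacle here: the content is entirely a careful translation between the heap/retract and group/heap dictionaries already established, and the only point demanding attention is the systematic use of the absorber condition $t\cdot 0 = 0$ to convert between the two forms of the distributivity axioms in \eqref{eq:TGrp}.
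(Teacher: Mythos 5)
Your proof is correct and amounts to the same isomorphism the paper constructs: the paper simply cites the already-established identifications $\MAbs\cong\star/T\Mod$ and $\star/\Ah\cong\Ab$ (Remark~\ref{rem:abelianization}) and observes that the latter restricts, whereas you unwind those identifications and verify the $T$-action compatibilities and the absorber condition explicitly via Proposition~\ref{prop:AhAb}. No gap; your explicit checks (in particular deriving $t\cdot 0_G=0_G$ from additivity and using the absorber to pass between the two distributivity axioms) are exactly the content hidden in the paper's word ``restricts.''
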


\begin{proof}
We know from Remark \ref{rem:abelianization} that the under category $\star/\Ah$ is isomorphic to $\Ab$. This isomorphism restricts to an isomorphism between $\star/T\Mod$ and $T\GMod$.
\end{proof}

Henceforth, we will often omit the $\cdot$ symbol when denoting the action of a truss $T$ on a $T$-module $M$, that is, we will simply write $tm$ instead of $t \cdot m$.

\section{Heaps of modules}\label{sec:mod}
In this section we introduce and study heaps of $T$-modules, where $T$ is a truss.

                                        
\subsection{Heaps of modules. First properties}\label{subs:first}

\begin{definition}[Heaps of modules]\label{def:hom}
Let $T$ be a truss. A {\em heap of $T$-modules} $(M,[-,-,-],\Lambda)$ is an abelian heap $(M,[-,-,-])$ together with
\[\Lambda \colon  T \times M \times M \lra M, \qquad (t,m,n) \lto \Lambda(t,m,n),\]
such that
\begin{enumerate}[label=(\arabic*),ref=(\arabic*),leftmargin=0.8cm]
\item\label{item:HM1} $\Lambda$ is a heap homomorphism in the first and the third entry separately, that is,
\begin{equation}\label{eq:leftheap}
\Lambda\Big([t,t',t''],m,n\Big) = \Big[\Lambda\big(t,m,n\big),\Lambda\big(t',m,n\big),\Lambda\big(t'',m,n\big)\Big]
\end{equation}
and
\begin{equation}\label{eq:rightheap}
\Lambda\Big(t,m,[n,n',n'']\Big) = \Big[\Lambda\big(t,m,n\big),\Lambda\big(t,m,n'\big),\Lambda\big(t,m,n''\big)\Big]
\end{equation}
for all $t,t',t'' \in T$ and $m,n,n',n'' \in M$
\item\label{item:HM2} $\Lambda$ is $T$-associative, that is to say
\begin{equation}\label{eq:ass}
\Lambda(st,m,n) = \Lambda\Big(s,m,\Lambda\big(t,m,n\big)\Big)
\end{equation}
for all $s,t \in T$, $ m,n \in M$.
\item\label{item:HM3} $\Lambda$ satisfies the \emph{base change property}
\begin{equation}\label{eq:basechange}
\Lambda\big(t,m,n\big) = \Big[\Lambda(t,e,n),\Lambda(t,e,m),m\Big]
\end{equation}
for all $m,n,e \in M$, $t \in T$.
\end{enumerate}
A {\em morphism of heaps of modules} is a morphism $f \colon  M \lra N$ of heaps such that
\begin{equation}\label{eq:morph}
f\Big(\Lambda_M(t,m,n)\Big) = \Lambda_N\Big(t,f(m),f(n)\Big),
\end{equation}
for all $m,n \in M$, $t \in T$. The category of heaps of $T$-modules and their morphisms will be denoted by $T\HMod$.\label{p-t-hmod}
\end{definition}

\begin{example}\label{ex:trivial}
Let $T$ be a truss. Any abelian heap $H$ is a heap of $T$-modules with any of the two trivial actions:
\begin{blist}
\item $\Lambda(t,h,h') = h$,
\item $\Lambda(t,h,h') = h'$,
\end{blist}
for all $t\in T$ and $h,h'\in H$.
In fact, in both cases, conditions \eqref{eq:leftheap}, \eqref{eq:rightheap}, \eqref{eq:basechange} follow by the Mal'cev identities. The associativity \eqref{eq:ass} is obvious.
\end{example}

One may wonder if the base change property is independent or not of the other axioms. The following example shows that it is.

\begin{example}
Let $T=\tT(\ZZ)$ and $M=\hH(\ZZ)$, then we can define $${\Lambda\colon T\times M\times M\to M}\qquad \text{by}\qquad (a,p,m)\lto\begin{cases}
p\ \text{if}\ p\ \text{even}\\
m\ \text{if}\ p\ \text{odd}
\end{cases}.$$
One can easily show that $\Lambda$ fulfils all conditions from Definition \ref{def:hom} apart from the base change property \eqref{eq:basechange}. \note{Observe that, in this case, the translation map $\tau_e^f\colon M\to M,$ $m\lto [m,e,f]=m+e-f$ is not a homomorphism of heaps of modules.
Indeed,
\[
\tau_2^1(\Lambda(3,2,1))=\tau_2^1(2)=2-2+1 \neq 0 =\Lambda(3,1,0)=\Lambda(3,\tau_2^1(2),\tau_2^1(1)).
\]}
\end{example}

The following three lemmas explore the significance of the axioms presented in Definition~\ref{def:hom}, in particular they add significance to the base change property \eqref{eq:basechange} and show how it can be used to repair the seemingly asymmetric requirement for $\Lambda$ to be a heap morphism in the first and the third arguments, but not in the middle one.

\begin{lemma}
The base change property \eqref{eq:basechange} is equivalent to
\begin{equation}
\big[\Lambda(t,m,e),e,\Lambda(t,e,n)\big] = \Lambda(t,m,n) \label{eq:interchangebis}
\end{equation}
for all $t \in T$ and $m,n,e \in M$.
\end{lemma}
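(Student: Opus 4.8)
The plan is to prove the equivalence using only the abelian heap axioms \eqref{eq:assoc}, \eqref{eq:malcev} together with the commutativity of the ternary operation. Neither the heap-morphism conditions \eqref{eq:leftheap}--\eqref{eq:rightheap} nor the $T$-associativity \eqref{eq:ass} are needed, since both \eqref{eq:basechange} and \eqref{eq:interchangebis} are assertions about the single map $\Lambda(t,-,-)$ with $t$ fixed; this observation keeps the argument self-contained.

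First I would record two preliminary facts. Setting $m=n=e$ in \eqref{eq:basechange} gives $\Lambda(t,e,e)=[\Lambda(t,e,e),\Lambda(t,e,e),e]$, which collapses to $\Lambda(t,e,e)=e$ by the Mal'cev identity \eqref{eq:malcev}; setting $m=n=e$ in \eqref{eq:interchangebis} gives the same conclusion after invoking cancellativity of the heap operation. Secondly, I would isolate the purely heap-theoretic identity
\[
\big[[e,B,m],e,C\big] = [C,B,m],
\]
valid for all $B,C,m,e$ in an abelian heap: it follows by rewriting the inner triple via commutativity, applying associativity \eqref{eq:assoc}, collapsing $[e,e,C]=C$ by \eqref{eq:malcev}, and using commutativity once more. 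This identity is precisely what converts one of the two displayed expressions into the other.

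For the implication \eqref{eq:basechange}$\Rightarrow$\eqref{eq:interchangebis}, I would put $n=e$ in \eqref{eq:basechange} and use $\Lambda(t,e,e)=e$ to get $\Lambda(t,m,e)=[e,\Lambda(t,e,m),m]$. Substituting this into the left-hand side of \eqref{eq:interchangebis} and applying the displayed heap identity with $B=\Lambda(t,e,m)$ and $C=\Lambda(t,e,n)$ turns it into $[\Lambda(t,e,n),\Lambda(t,e,m),m]$, which equals $\Lambda(t,m,n)$ by \eqref{eq:basechange}. Conversely, for \eqref{eq:interchangebis}$\Rightarrow$\eqref{eq:basechange}, the cleanest route is to apply \eqref{eq:interchangebis} itself with base point $m$ (rather than $e$) to rewrite $\Lambda(t,e,n)=[\Lambda(t,e,m),m,\Lambda(t,m,n)]$; substituting this into the right-hand side of \eqref{eq:basechange} and simplifying by commutativity, associativity \eqref{eq:assoc} and the Mal'cev identities \eqref{eq:malcev} collapses it directly to $\Lambda(t,m,n)$.

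The computations are routine heap manipulations, so the only genuine subtlety—and the step I would treat most carefully—is the choice of base point in the backward direction: reading \eqref{eq:interchangebis} as an identity holding for \emph{every} middle term and instantiating it at $m$ is exactly what makes $\Lambda(t,m,n)$ appear and lets the remaining triple vanish via \eqref{eq:malcev}. Everything else is bookkeeping with \eqref{eq:assoc}, \eqref{eq:malcev} and commutativity.
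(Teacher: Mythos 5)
Your proof is correct and rests on the same idea as the paper's: the two identities are pure heap rearrangements of one another once one exploits the universal quantification over the base point (the paper simply solves the base-change equation for $\Lambda(t,e,n)$, using the Mal'cev identities and abelianity, and observes that the resulting identity is \eqref{eq:interchangebis} with the roles of $m$ and $e$ interchanged, which yields both implications at once). Your two-directional argument, with the auxiliary identity $\big[[e,B,m],e,C\big]=[C,B,m]$ and the preliminary fact $\Lambda(t,e,e)=e$, is a more elaborate but equally valid packaging of that same rearrangement.
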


\begin{proof}
In view of the Mal'cev identities and of the abelianity of the bracket, 
\[
\Lambda(t,m,n) = \big[\Lambda(t,e,n),\Lambda(t,e,m),m\big]
\]
if and only if 
\[
\big[\Lambda(t,e,m),m,\Lambda(t,m,n)\big] = \Lambda(t,e,n),
\]
that is to say, the base change property \eqref{eq:basechange} is equivalent to \eqref{eq:interchangebis}.
\end{proof}

\begin{lemma}\label{lem:idem-inter}
 For fixed $t\in T$ and $n\in M$, the map $\Lambda(t,-,n)\colon M\to M$ is a heap homomorphism. Moreover, 
\begin{equation}\label{eq:idempotent}
\Lambda(t,m,m)=m,
\end{equation}
\begin{equation}\label{eq:interchange}
\Lambda(t,m,n) = [n,\Lambda(t,n,m),m]
\end{equation}
and 
\begin{equation}\label{eq:negation}
\Lambda\big(t,[e,m,f],[e,n,f]\big) = [e,\Lambda(t,m,n),f],
\end{equation}
for all $t\in T$, $m,m',n,e,f \in M$.
\note{Consequently, 
\begin{equation}\label{eq:middle-comp}
\Lambda(t,[m_1,m_2,m_3],n) = [\Lambda(t,m_1,n),\Lambda(t,m_2,n),\Lambda(t,m_3,n)]
\end{equation}
for all $t\in T$ and $m_1,m_2,m_3,n\in M$.}
\end{lemma}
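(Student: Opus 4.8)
The plan is to prove the four assertions in an order in which each feeds the next, using only the heap axioms together with the defining properties \eqref{eq:leftheap}, \eqref{eq:rightheap}, \eqref{eq:ass} and \eqref{eq:basechange}; the fact that $M$ is an \emph{abelian} heap will be exploited at the end of each step by passing to a retract group $\gG(M;o)$, in which $[x,y,z]=x-y+z$. First I would dispatch the idempotency \eqref{eq:idempotent}: specialising the base change property \eqref{eq:basechange} to $n=m$ gives $\Lambda(t,m,m)=\big[\Lambda(t,e,m),\Lambda(t,e,m),m\big]$, and the right-hand side collapses to $m$ by the Mal'cev identity \eqref{eq:malcev}. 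The interchange rule \eqref{eq:interchange} then follows at once: taking $e=n$ in \eqref{eq:basechange} yields $\Lambda(t,m,n)=\big[\Lambda(t,n,n),\Lambda(t,n,m),m\big]$, and substituting $\Lambda(t,n,n)=n$ from the idempotency just proved gives exactly $\Lambda(t,m,n)=\big[n,\Lambda(t,n,m),m\big]$.

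The heart of the statement is that $\Lambda(t,-,n)$ is a heap homomorphism, that is, $\Lambda(t,[m_1,m_2,m_3],n)=\big[\Lambda(t,m_1,n),\Lambda(t,m_2,n),\Lambda(t,m_3,n)\big]$; this is the step I expect to be the main obstacle, precisely because the homomorphism axioms \eqref{eq:leftheap} and \eqref{eq:rightheap} grant linearity only in the first and third slots and say nothing about the middle one. The key idea is to use the interchange rule to trade the middle slot for the third slot, where homomorphy is axiomatic. By \eqref{eq:interchange},
\[ \Lambda(t,[m_1,m_2,m_3],n)=\big[n,\Lambda(t,n,[m_1,m_2,m_3]),[m_1,m_2,m_3]\big], \]
and now \eqref{eq:rightheap} applies to the inner term because $\Lambda(t,n,-)$ is a homomorphism in its third argument; writing $a_i:=\Lambda(t,n,m_i)$ this becomes $\big[n,[a_1,a_2,a_3],[m_1,m_2,m_3]\big]$. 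Rewriting each $\Lambda(t,m_i,n)=[n,a_i,m_i]$ on the other side, again by \eqref{eq:interchange}, the claim reduces to the purely heap-theoretic identity
\[ \big[n,[a_1,a_2,a_3],[m_1,m_2,m_3]\big]=\big[[n,a_1,m_1],[n,a_2,m_2],[n,a_3,m_3]\big], \]
which holds in every abelian heap, as both sides equal $n-a_1+a_2-a_3+m_1-m_2+m_3$ in any retract group (and there is nothing to check when $M=\varnothing$).

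Finally, for the negation rule \eqref{eq:negation} I would again lean on base change rather than on the middle-slot homomorphism, so that this step is logically independent of the previous one. Applying \eqref{eq:basechange} with middle point $e$ to the pair $[e,m,f]$, $[e,n,f]$ gives
\[ \Lambda(t,[e,m,f],[e,n,f])=\big[\Lambda(t,e,[e,n,f]),\Lambda(t,e,[e,m,f]),[e,m,f]\big], \]
and expanding the two inner terms by the third-slot homomorphism \eqref{eq:rightheap} and idempotency \eqref{eq:idempotent} leaves $\big[[e,p,r],[e,q,r],[e,m,f]\big]$, where $p=\Lambda(t,e,n)$, $q=\Lambda(t,e,m)$ and $r=\Lambda(t,e,f)$. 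Since base change also records $\Lambda(t,m,n)=[p,q,m]$, the target $[e,\Lambda(t,m,n),f]$ equals $\big[e,[p,q,m],f\big]$, so \eqref{eq:negation} reduces to the abelian-heap identity $\big[[e,p,r],[e,q,r],[e,m,f]\big]=\big[e,[p,q,m],f\big]$, both sides being $e-p+q-m+f$ in a retract group.

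In summary, the whole proof is driven by a single mechanism: each identity is obtained by choosing the right specialisation of the base change property \eqref{eq:basechange}, moving all genuine $\Lambda$-manipulation into the third slot where \eqref{eq:rightheap} is available, and then collapsing what remains to a generic identity of abelian heaps verified in $\gG(M;o)$. The only conceptually nontrivial move is the use of \eqref{eq:interchange} in the middle-slot homomorphism, which is exactly the point at which the base change property repairs the apparent asymmetry of Definition~\ref{def:hom}\ref{item:HM1}.
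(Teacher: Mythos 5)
Your proof is correct, and it runs on the same engine as the paper's --- every identity is extracted from the base change property \eqref{eq:basechange} together with third-slot homomorphy \eqref{eq:rightheap} and abelian-heap reshuffling --- but the organisation differs in two respects worth recording. For the middle-slot homomorphy you first establish \eqref{eq:idempotent} and \eqref{eq:interchange} and then use \eqref{eq:interchange} to transport the bracket from the middle slot to the third; the paper instead proves middle-slot homomorphy directly from \eqref{eq:basechange} with an auxiliary base point $e$, before deriving \eqref{eq:idempotent} and \eqref{eq:interchange}. These are two specialisations of the same trick, since \eqref{eq:interchange} is precisely \eqref{eq:basechange} at $e=n$. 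For \eqref{eq:negation} the difference is more substantive: the paper expands $\Lambda(t,[e,m,f],[e,n,f])$ into a nine-term bracket using homomorphy in \emph{all three} arguments, so it relies on the middle-slot homomorphy just established, whereas you apply \eqref{eq:basechange} once at the base point $e$ and never touch the middle slot, making that step logically independent of the previous one --- a small gain in modularity. Your systematic device of verifying the residual bracket identities inside a retract group $\gG(M;o)$, where $[x,y,z]=x-y+z$, is a legitimate and more mechanical substitute for the paper's hand reshuffling of long brackets (and you correctly dispose of the case $M=\varnothing$); the paper's version buys only that the computation stays entirely in heap notation. All the individual verifications you sketch check out.
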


\begin{proof}
Observe that the equation \eqref{eq:basechange} entails that
\[
\begin{aligned}
\Lambda\Big(t,&[m,m',m''],n\Big)  = \Big[\Lambda(t,e,n),\Lambda(t,e,[m,m',m'']),[m,m',m'']\Big] \\
 & = \Big[\Lambda(t,e,n),\Lambda(t,e,m),\Lambda(t,e,m'),\Lambda(t,e,m''),m,m',m''\Big] \\
 & = \Big[\Lambda(t,e,n),\Lambda(t,e,m),m, \Lambda(t,e,n),\Lambda(t,e,m'),  m' ,\Lambda(t,e,n),\Lambda(t,e,m''),m''\Big]\\
 & = \Big[\Lambda(t,m,n),\Lambda(t,m',n),\Lambda(t,m'',n)\Big],
\end{aligned}
\]
that is to say, that $\Lambda$ is a heap map in the middle entry. Furthermore,
\[\Lambda(t,m,m) = \left[\Lambda(t,e,m),\Lambda(t,e,m),m\right] = m\]
for all $t\in T$, $e,m \in M$, while equation \eqref{eq:interchange} follows by replacing $e$ on the right hand side of \eqref{eq:basechange} by $n$ and using \eqref{eq:idempotent}. 

 Finally, by using \eqref{eq:interchangebis} 
 (to derive the third and fifth equalities) as well as \eqref{eq:idempotent} (to derive the second equality)  
 and the fact that $\Lambda$ is a heap morphism in all three arguments (to  derive the first equality), we can compute
 $$
 \begin{aligned}
 \Lambda \big(t,[e,m,f],[e,n,f]\big) =&\Big[\Lambda(t,e,e),\Lambda(t,e,n), \Lambda(t,e,f),
 \Lambda(t,m,e),\Lambda(t,m,n),\\
 &\Lambda(t,m,f),
 \Lambda(t,f,e),\Lambda(t,f,n), \Lambda(t,f,f) \Big]\\
 =& \Big[e,\Lambda(t,e,n), \Lambda(t,e,f), f, \Lambda(t,f,e),
 \Lambda(t,m,e),\Lambda(t,m,n),\\
 & \Lambda(t,m,f), f, \Lambda(t,f,n), f \Big]\\
 =& \Big[e,\Lambda(t,e,n), e,
 \Lambda(t,m,e),\Lambda(t,m,n),
 \Lambda(t,m,n),f \Big]\\
 =& \Big[e,\Lambda(t,m,n),  f \Big].
 \end{aligned}
 $$
 In addition to the above mentioned properties, we have also freely used the Mal'cev identities and the reshuffling rules for abelian heaps. This proves \eqref{eq:negation}.
\end{proof}

We note in passing that the property \eqref{eq:negation} implies that in $\gG(M;e)$ we have
$$
\Lambda(t,-m,-n) = -\Lambda(t,m,n),
$$
for all $m,n\in M$ and $t\in T$, where we recall that $-m = [e,m,e]$. Furthermore, from \eqref{eq:idempotent} it follows that all constant maps are morphisms of heaps of modules, as the following example exhibits.

\begin{example}\label{ex:constmaps}
Let $T$ be a truss and let $M$ be a heap of $T$-modules. For any $m \in M$, the constant function 
\[
\widehat{m}\colon M \lra M, \qquad x \lto m,
\]
is a morphism of heaps of $T$-modules. In fact, it is clearly a morphism of heaps and the compatibility with $\Lambda$ follows from \eqref{eq:idempotent}.
\end{example}

\begin{lemma}\label{lem:trans}
Let $M$ be a heap with a function $\Lambda\colon  T\times M\times M \lra M$ that satisfies \eqref{eq:leftheap}, \eqref{eq:rightheap}, \eqref{eq:ass} and \eqref{eq:idempotent}. Then, for all $e,f\in M$, the map 
$$\tau_e^f\colon M\lra M, \qquad  m\lto [m,e,f],
$$
is a morphism of heaps of modules (that is, $\tau_e^f$ satisfies \eqref{eq:morph}) if and only if the base change property \eqref{eq:basechange} holds.
\end{lemma}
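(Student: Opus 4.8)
The plan is to first strip the statement down to a single equational identity and then prove the two implications separately; the forward implication is a routine specialisation, while the converse is where the real work lies.

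Since $\tau_e^f$ is a translation automorphism, it is automatically a homomorphism of heaps by the discussion in \S\ref{ssec:heaps}. Hence asking that it be a morphism of heaps of $T$-modules amounts precisely to the compatibility \eqref{eq:morph} with $\Lambda$, which for the map $x\mapsto[x,e,f]$ reads $\big[\Lambda(t,m,n),e,f\big]=\Lambda\big(t,[m,e,f],[n,e,f]\big)$ for all $t\in T$ and $m,n\in M$; call this identity $(\star)$. The whole statement thus becomes: $(\star)$ holds for all $e,f,m,n,t$ if and only if the base change property \eqref{eq:basechange} holds.

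For the implication ``$(\star)\Rightarrow$ base change'' I would simply specialise. Setting $m=e$ in $(\star)$ and using the Mal'cev identity \eqref{eq:malcev} in the form $[e,e,f]=f$ collapses the right-hand side to $\Lambda(t,f,[n,e,f])$, so that $(\star)$ reduces to $[\Lambda(t,e,n),e,f]=\Lambda(t,f,[n,e,f])$. Reparametrising by the bijectivity of $\tau_e^f$ (write $y=[n,e,f]$, so $n=[y,f,e]$), expanding the left-hand term via the third-entry heap-morphism property \eqref{eq:rightheap} together with idempotency \eqref{eq:idempotent}, and simplifying with associativity \eqref{eq:assoc} and Mal'cev, one lands exactly on
\[
\Lambda(t,f,y)=\big[\Lambda(t,e,y),\Lambda(t,e,f),f\big],
\]
which, after renaming $f\mapsto m$ and $y\mapsto n$, is the base change property \eqref{eq:basechange}. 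For the converse ``base change $\Rightarrow (\star)$'' I would work from the equivalent interchange form \eqref{eq:interchangebis} and expand both sides of $(\star)$ about the base point $f$. Writing $\Lambda(t,[m,e,f],[n,e,f])=\big[\Lambda(t,[m,e,f],f),f,\Lambda(t,f,[n,e,f])\big]$ and unfolding the two outer evaluations by \eqref{eq:leftheap}, \eqref{eq:rightheap} and \eqref{eq:idempotent} (and expressing $\Lambda(t,m,n)$ on the right-hand side likewise via \eqref{eq:interchangebis}), both sides become expressions in the four ``scalars'' $\Lambda(t,m,f)$, $\Lambda(t,e,f)$, $\Lambda(t,f,n)$, $\Lambda(t,f,e)$. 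Passing to the retract group $\gG(M;f)$ makes the bracket bookkeeping transparent: $(\star)$ reduces to the single identity $\Lambda(t,e,f)+\Lambda(t,f,e)=e$ in $\gG(M;f)$, which is precisely \eqref{eq:interchangebis} evaluated at $m=n=e$ combined with $\Lambda(t,e,e)=e$ from \eqref{eq:idempotent}.

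The main obstacle is this converse direction, and in particular the temptation to quote the negation identity \eqref{eq:negation} (which is indeed available once base change holds). This shortcut does not work: \eqref{eq:negation} governs the map $x\mapsto[e,x,f]$, which in the retract is $x\mapsto e-x+f$, an inversion composed with a translation, whereas $\tau_e^f$ is the genuine translation $x\mapsto[x,e,f]=x-e+f$. The two maps differ, so no substitution turns \eqref{eq:negation} into $(\star)$ without re-expanding $\Lambda$ in its arguments. The real content is therefore the scalar identity above, and the device that makes the bracket expansion cancel cleanly is to use one and the same element $f$ both as the interchange base in \eqref{eq:interchangebis} and as the base point of the retract $\gG(M;f)$.
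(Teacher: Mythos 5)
Your proposal is correct, and the reduction of the lemma to the single identity $(\star)$ is exactly the right framing. Your forward direction is, up to the cosmetic reparametrisation $y=[n,e,f]$, the same computation as the paper's: the paper evaluates the morphism condition on the arguments $(t,e,m)$, expands $\Lambda(t,f,[m,e,f])$ by \eqref{eq:rightheap} and \eqref{eq:idempotent}, and translates both sides by $\tau_f^e$ to land on \eqref{eq:basechange} directly, with no change of variables needed. Where you genuinely diverge is the converse. The paper's proof is a single chain of ``if and only if''s on the instances $\tau_e^f(\Lambda(a,e,m))=\Lambda(a,f,[m,e,f])$, which settles \eqref{eq:morph} only for pairs whose first module argument is the base point $e$ of the translation; the passage from there to \eqref{eq:morph} at arbitrary $(m,n)$ is left implicit. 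Your retract computation in $\gG(M;f)$ supplies precisely that missing step, reducing the general identity to $\Lambda(t,e,f)+\Lambda(t,f,e)=e$, which is \eqref{eq:interchangebis} at $m=n=e$ together with \eqref{eq:idempotent}; I checked the bookkeeping and it closes. Your warning about \eqref{eq:negation} is also apt, since that identity concerns $x\mapsto[e,x,f]$ rather than $\tau_e^f$. One small citation slip: expanding $\Lambda(t,[m,e,f],f)$ in the middle entry is not licensed by \eqref{eq:leftheap} or \eqref{eq:rightheap} but by the middle-entry additivity established in Lemma~\ref{lem:idem-inter}, which is available only because base change is assumed in that direction; you should quote that lemma explicitly there. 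In short, your argument buys a fully explicit converse at the cost of invoking Lemma~\ref{lem:idem-inter}, while the paper's buys brevity by checking only the specialised instances.
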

\begin{proof}
Let $M$ satisfy \eqref{eq:leftheap}, \eqref{eq:rightheap}, \eqref{eq:ass} and \eqref{eq:idempotent}.  Then, for all $a\in T$ and $m,e,f\in M$,
\[\tau_e^f(\Lambda(a,e,m))=[\Lambda(a,e,m),e,f]\]
equals
\[\Lambda(a,\tau_e^f(e),\tau_e^f(m))=\Lambda(a,f,[m,e,f])=[\Lambda(a,f,m),\Lambda(a,f,e),f]\]
if and only if
\[\Lambda(a,e,m)=[\Lambda(a,e,m),e,f,f,e]\]
equals
\[ [\Lambda(a,f,m),\Lambda(a,f,e),f,f,e] = [\Lambda(a,f,m),\Lambda(a,f,e),e] .\]
Summing up: the translation isomorphisms are morphisms of heaps of modules if and only if the base change property holds.
\end{proof}

Lemma~\ref{lem:idem-inter} can be used to characterise congruences of heaps of modules. Let $(M,\Lambda)$ be a heap of $T$-modules. By a {\em sub-heap of modules} we mean a sub-heap $N$ of $M$ that is closed under the operation $\Lambda$, that is, a sub-heap such that, for all $n,n'\in M$ and $t\in T$, $\Lambda(t,n,n')\in N$.

\begin{proposition}\label{prop:cong}
Let $(M,\Lambda)$ be a heap of $T$-modules.
\begin{enumerate}[label=(\arabic*),ref=(\arabic*),leftmargin=0.8cm]
\item\label{item:cong1} For every non-empty sub-heap of modules $N$ of $M$, the sub-heap relation $\sim_N$ is a congruence on $M$.
\item\label{item:cong2} If $\sim$ is a congruence on the heap of $T$-modules $M$, then its equivalence classes are sub-heaps of modules of $T$.
\end{enumerate}
\end{proposition}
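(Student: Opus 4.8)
The plan is to split each statement into a \emph{heap part}, already settled by the general theory of abelian heaps recalled in Subsection~\ref{ssec:heaps}, and a \emph{$\Lambda$-part}, where the axioms of Definition~\ref{def:hom} enter. Recall that a congruence on a heap of $T$-modules is an equivalence relation $\sim$ that is a congruence of the underlying abelian heap and, in addition, satisfies $\Lambda(t,m,n)\sim\Lambda(t,m',n')$ whenever $m\sim m'$ and $n\sim n'$, for all $t\in T$. For \ref{item:cong1}, the fact that $\sim_N$ is a heap congruence is exactly the statement recalled in Subsection~\ref{ssec:heaps}, so only compatibility with $\Lambda$ remains to be checked; for \ref{item:cong2}, each class is a sub-heap for the same reason, so only closure under $\Lambda$ is left. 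Throughout I would write $t\act_e m\coloneqq\Lambda(t,e,m)$ for the $T$-module structure obtained by fixing the middle entry.

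For the $\Lambda$-part of \ref{item:cong1} I would pass to a retract. Fix $e\in N$ (possible since $N\neq\varnothing$) and work in the abelian group $G=\gG(M;e)$, written additively, so that $[a,b,c]=a-b+c$; then $N$ becomes a subgroup of $G$ and $m\sim_N m'$ becomes $m-m'\in N$ (the coset identification recalled in Subsection~\ref{ssec:heaps}). The key observation is that, by idempotency \eqref{eq:idempotent}, the map $t\act_e(-)=\Lambda(t,e,-)$ fixes $e$; being also a heap homomorphism by \eqref{eq:rightheap}, Proposition~\ref{prop:AhAb} makes it a group endomorphism of $G$. The base change property \eqref{eq:basechange} then rewrites, in $G$, as
\[
\Lambda(t,m,n)=(t\act_e n)-(t\act_e m)+m .
\]
Subtracting the analogous expression for $\Lambda(t,m',n')$ and using additivity of $t\act_e$, the difference $\Lambda(t,m,n)-\Lambda(t,m',n')$ equals $t\act_e(n-n')-t\act_e(m-m')+(m-m')$. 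Since $N$ is a sub-heap of modules containing $e$, it is closed under $t\act_e(-)=\Lambda(t,e,-)$, so each of the three summands lies in the subgroup $N$, and hence so does the difference. This is precisely $\Lambda(t,m,n)\sim_N\Lambda(t,m',n')$.

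The $\Lambda$-part of \ref{item:cong2} is then immediate from idempotency. Given a class $C$ and $n,n'\in C$, I would pick any representative $a\in C$; then $n\sim a$ and $n'\sim a$, so compatibility of the congruence with $\Lambda$ yields $\Lambda(t,n,n')\sim\Lambda(t,a,a)$, and $\Lambda(t,a,a)=a$ by \eqref{eq:idempotent}. Hence $\Lambda(t,n,n')\sim a$, that is, $\Lambda(t,n,n')\in C$.

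The only genuinely non-formal step is the $\Lambda$-compatibility in \ref{item:cong1}, and the main obstacle there is to recognise that the base change property turns $\Lambda$ into an \emph{affine} expression in the retract $G$ whose linear part is the endomorphism $t\act_e$ under which $N$ is stable; once this is seen, closure of $N$ under $\Lambda$ does all the work. One could instead argue entirely inside the heap by means of \eqref{eq:negation}, but the retract computation is shorter and makes the affine nature of the argument transparent.
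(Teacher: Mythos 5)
Your proof is correct, and part \ref{item:cong1} follows a genuinely different route from the paper's. The paper stays entirely inside the heap: it first proves the two one-sided compatibilities $\Lambda(t,m',m'')\sim_N\Lambda(t,m',m)$ and $\Lambda(t,m'',m')\sim_N\Lambda(t,m,m')$ separately, using \eqref{eq:rightheap}, the base change property \eqref{eq:basechange} and the interchange identities \eqref{eq:interchange}--\eqref{eq:interchangebis} (see \eqref{cong.1} and \eqref{cong.2}), and then concludes by transitivity. You instead fix $e\in N$, pass to the retract $\gG(M;e)$, observe via \eqref{eq:idempotent} and Proposition~\ref{prop:AhAb} that $\Lambda(t,e,-)$ is a group endomorphism fixing $e$, and read off from \eqref{eq:basechange} that $\Lambda(t,-,-)$ is affine with linear part $t\act_e$; the compatibility then reduces to the stability of the subgroup $N$ under $t\act_e$. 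Your argument is shorter and makes the affine mechanism behind the result transparent (anticipating the point of view of Section~\ref{sec:aff}), at the cost of choosing a base point and transporting the sub-heap relation to a coset relation; the paper's computation is base-point free and purely equational in the heap operations. One small point worth making explicit is that your identification of $m\sim_N m'$ with $m-m'\in N$ uses that $N$, being a sub-heap containing $e$, is a subgroup of $\gG(M;e)$ --- but this is exactly the coset description recalled in Subsection~\ref{ssec:heaps}, so nothing is missing. Part \ref{item:cong2} coincides with the paper's argument.
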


\begin{proof}
\ref{item:cong1} Let $N$ be a non-empty sub-heap of modules and consider any $m\sim_N m''$. This means that there exist $n,n'\in N$ such that $m'' = [n,n',m]$. For all $m'\in M$,
$$
\begin{aligned}
\Lambda(t,m',m'') &= \Lambda(t,m', [n,n',m]) \\
&= [\Lambda(t,m', n),\Lambda(t,m', n'), \Lambda(t,m',m)]\\
&= [[\Lambda(t,m', n),\Lambda(t,m', n'), n'], n', \Lambda(t,m',m)]\\
&= [\Lambda(t,n', n),n', \Lambda(t,m',m)],
\end{aligned}
$$
by \eqref{eq:rightheap} and the base change property \eqref{eq:basechange} combined with the associativity of the heap operation and the Mal'cev identity. Hence
\begin{equation}\label{cong.1}
\Lambda(t,m',m'') \sim_N \Lambda(t,m',m).
\end{equation}
Next, by using in addition equation \eqref{eq:interchange} in Lemma~\ref{lem:idem-inter} and \eqref{eq:interchangebis}, we can compute
\note{$$
\begin{aligned}
\Lambda(t,m'',m') &= [m',\Lambda(t,m',m''),m'']\\
&= [m',\Lambda(t,m',m),\Lambda(t,m', n'), \Lambda(t,m', n),n,n',m]\\
&= [m',\Lambda(t,m',m), \Lambda(t,n,n'),n', m]\\
&= [m',\Lambda(t,m',m), m, n', \Lambda(t,n,n')] = [\Lambda(t,m,m'), n', \Lambda(t,n,n')].
\end{aligned}
$$}
\[
\begin{aligned}
\Lambda(t,m'',m') & = \big[\Lambda(t,n,m'),\Lambda(t,n',m'),\Lambda(t,m,m')\big] \\
& = \big[\Lambda(t,n,m'),m',\Lambda(t,m',n'),n',\Lambda(t,m,m')\big] \\
& = \big[\Lambda(t,n,n'),n',\Lambda(t,m,m')\big].
\end{aligned}
\]
Therefore,
\begin{equation}\label{cong.2}
\Lambda(t,m'',m') \sim_N \Lambda(t,m,m').
\end{equation}
Finally, using \eqref{cong.1} and \eqref{cong.2} and the transitivity of equivalence relations we conclude that in general, if $m\sim_N m''$ and $m'\sim_N m'''$, then $\Lambda(t,m'',m''') \sim_N \Lambda(t,m,m')$, as required for a congruence.

\ref{item:cong2} Let $[m]$ be a congruence class of $m\in M$. Then $[m]$ is a sub-heap of $M$ by the Mal'cev identities (idempotency of the heap operation). Similarly, \eqref{eq:idempotent} in Lemma~\ref{lem:idem-inter} implies that if $m'\sim m$ and $m''\sim m$, then, for all $t\in T$,
$$
\Lambda(t,m',m'') \sim \Lambda(t,m,m) =m,
$$ 
and hence $[m]$ is a sub-heap of modules.
\end{proof}

The next two results justify the claim that heaps of modules can be understood as a different point of view on induced (sub)modules.  

\begin{proposition}\label{prop:homia}
Let $T$ be a truss, $M$ be a $T$-module and let $N$ be an induced $T$-submodule of $M$. Then $(N,\triangleright)$ is a heap of $T$-modules, where $\triangleright\colon T\times N\times N \to N$ is a map which assigns to every triple $(t,e,n)\in T\times N\times N$, the induced $e$-action $t\triangleright_e n$. Furthermore, the assignment
\[\Hh\colon T\Mod \to T\HMod, \qquad (M,[-,-,-],\cdot) \lto (M,[-,-,-],\triangleright)\]
is functorial.
\end{proposition}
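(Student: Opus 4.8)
The plan is to verify, in order, that $\triangleright$ is well defined into $N$, that it satisfies the three axioms of Definition~\ref{def:hom}, and finally that the object and arrow assignments defining $\Hh$ are compatible with identities and composition; the object part of $\Hh$ is then simply the special case $N=M$ of the first assertion (the empty module being dealt with vacuously). Well-definedness is immediate: by the very definition of an induced $T$-submodule one has $t\triangleright_e n\in N$ for all $t\in T$ and $e,n\in N$, so $\triangleright\colon T\times N\times N\to N$, $(t,e,n)\lto t\triangleright_e n=[t\cdot n,t\cdot e,e]$, is a genuine map. Throughout, I would exploit that after fixing a base point and passing to a retract the bracket becomes $[p,q,r]=p-q+r$, so that each identity reduces to an elementary computation in an abelian group, readily transcribed back via associativity, the Mal'cev identities and abelianity.

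For the axioms the guiding observation is that each follows from one of the three defining module axioms together with routine reshuffling. Writing $\Lambda(t,e,n)=t\triangleright_e n=[t\cdot n,t\cdot e,e]$, the left heap-morphism property \eqref{eq:leftheap} follows from $[t,t',t'']\cdot m=[t\cdot m,t'\cdot m,t''\cdot m]$ applied to both $n$ and $e$; the right heap-morphism property \eqref{eq:rightheap} follows from $t\cdot[n,n',n'']=[t\cdot n,t\cdot n',t\cdot n'']$; and $T$-associativity \eqref{eq:ass} follows from $(st)\cdot m=s\cdot(t\cdot m)$ after one use of heap associativity and the Mal'cev cancellation $[s\cdot e,s\cdot e,e]=e$. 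The base change property \eqref{eq:basechange} requires no new work: it is precisely the stabilisation identity \eqref{eq:indchng} recalled from \cite[Lemma~4.29]{Brz:par}, read with the two base points interchanged.

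For functoriality I would note that $\Hh$ leaves underlying sets and underlying maps unchanged, altering only the structure they carry; hence preservation of identities and of composition is automatic, and the sole remaining point is that a morphism $f\colon M\to M'$ of $T$-modules satisfies the compatibility \eqref{eq:morph} for the induced actions. This is a one-line check: since $f$ is a heap homomorphism and $T$-linear,
\[
f\big(\Lambda_M(t,e,n)\big)=f\big([t\cdot n,t\cdot e,e]\big)=[t\cdot f(n),t\cdot f(e),f(e)]=\Lambda_{M'}\big(t,f(e),f(n)\big).
\]
I do not anticipate any genuine obstacle, since every step is a direct consequence of a module or a heap axiom and the one potentially delicate identity, the base change property, is exactly the previously recorded \eqref{eq:indchng}. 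The only point needing a little care is the bookkeeping behind \eqref{eq:leftheap} and \eqref{eq:rightheap}, where one must confirm that bracketing two heap homomorphisms against a fixed constant again yields a heap homomorphism; this is transparent in the retract, where heap homomorphisms become affine maps and the bracket of two such maps against a fixed constant is again affine.
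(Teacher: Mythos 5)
Your proposal is correct and follows essentially the same route as the paper: the heap-morphism and associativity axioms are exactly the statement that each $(N,\triangleright_e)$ is a $T$-module (which the paper cites rather than recomputes), the base change property is indeed precisely the stabilisation identity \eqref{eq:indchng} relabelled (the paper verifies it by the same two-line bracket computation), and your functoriality check is identical to the paper's.
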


\begin{proof}
Observe that since $N$ with the induced action $\triangleright_e$ is a $T$-module, for all $e\in N$, it is enough to check that the base change property holds. Let $m,n \in N$, $t \in T$, then
$$
[t\triangleright_e n, t\triangleright_e m, m]=[t\!\cdot\! n,t\!\cdot\!e,e,t\!\cdot\!m,t\!\cdot\!e,e,m]=[t\!\cdot\!n,t\!\cdot\!m,m] =t\triangleright_m n.
$$
Thus $(N,\triangleright)$ is a heap of $T$-modules. Moreover, if $f\colon  M \lra M'$ is a morphism of $T$-modules then 
\[f\left(t \triangleright_e m\right) = f\big([t\!\cdot\!m, t \!\cdot\!e,e]\big) = \big[t\!\cdot\!f(m), t\!\cdot\!f(e),f(e)\big] = t \triangleright_{f(e)} f(m)\]
and hence $f$ is a morphism of heaps of $T$-modules as well.
\end{proof}
\begin{example}
Since an abelian heap $H$ is an $\eE(H)$-module by evaluation (see Example~\ref{ex:modules}), the map 
$
\Lambda:\eE(H)\times H\times H\lra H,$  $(f,a,b) \lto [f(b),f(a),a],$
makes $H$ into a heap of $\eE(H)$-modules.
\end{example}

\begin{example}\label{ex:E(T)}
Let $T$ be a truss. The endomorphism truss $\eE(T)$ of $T$ as an abelian heap is a $T$-module with $(t \cdot f)(t') \coloneqq tf(t')$ for all $t,t'\in T$ and $f \in \eE(T)$. Therefore, it also enjoys a structure of heap of $T$-modules as in Proposition \ref{prop:homia}, explicitly given by $\Lambda(t,f,f') = [t\cdot f', t\cdot f, f]$.
\end{example}

\begin{lemma}\label{lem:homtoind}
Let $(M,\Lambda)$ be a heap of $T$-modules over a truss $T$. Then for any chosen $e\in M$, the pair $\big(M,\act_e\big)$, where  $\act_e \coloneqq \Lambda(-,e,-)\colon T\times M\to M$, is a $T$-module with absorber $e$. Moreover $\Hh\big(M,\act_e\big) = (M,\Lambda)$ for every heap of $T$-modules $(M,\Lambda)$ and $\big(\Hh(M,\cdot),\act_e\big) = (M,\triangleright_e)$ for every $T$-module $(M,\cdot)$.
\end{lemma}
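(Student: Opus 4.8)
The plan is to verify the three claims in turn, each of which reduces to a direct reading of one of the axioms of Definition~\ref{def:hom} once the middle slot of $\Lambda$ is frozen at $e$.

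First I would check that $t\act_e m\coloneqq\Lambda(t,e,m)$ makes $M$ into a $T$-module. The associativity $t\act_e(t'\act_e m)=(tt')\act_e m$ is exactly the $T$-associativity \eqref{eq:ass} read with the first argument equal to $e$; the two distributivity laws for a module are precisely the requirements \eqref{eq:leftheap} and \eqref{eq:rightheap} that $\Lambda$ be a heap homomorphism in the first and in the third argument, again with the middle argument fixed at $e$. That $e$ is an absorber is immediate from idempotency $\Lambda(t,e,e)=e$, i.e.\ \eqref{eq:idempotent} from Lemma~\ref{lem:idem-inter}.

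Next I would prove $\Hh(M,\act_e)=(M,\Lambda)$. Unwinding the definition of $\Hh$ from Proposition~\ref{prop:homia}, the associated ternary operation sends $(t,f,n)$ to the $f$-induced action $[t\act_e n,t\act_e f,f]=[\Lambda(t,e,n),\Lambda(t,e,f),f]$, and the base change property \eqref{eq:basechange} identifies this with $\Lambda(t,f,n)$. Hence the two heap-of-module structures agree. This is the one place where the base change axiom enters essentially, and it retroactively explains why that axiom was built into Definition~\ref{def:hom}.

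Finally, for $(\Hh(M,\cdot),\act_e)=(M,\triangleright_e)$ I would simply compose definitions: the action $\act_e$ attached to the heap of modules $\Hh(M,\cdot)$ evaluates to $\triangleright(t,e,m)=t\triangleright_e m=[t\cdot m,t\cdot e,e]$, which is by definition the $e$-induced module structure of Definition~\ref{def:induced}. I do not expect a genuine obstacle in any of the three steps; the only care needed is bookkeeping, to keep straight the two distinct roles of $e$ --- as the frozen middle entry of $\Lambda$ in $\act_e$, and as the base point of an induced action --- so that the $\Lambda$-picture and the induced-action picture are matched up correctly.
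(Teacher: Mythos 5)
Your proposal is correct and follows essentially the same route as the paper's proof: freezing the middle slot of $\Lambda$ at $e$ and reading off axioms \eqref{eq:leftheap}, \eqref{eq:rightheap}, \eqref{eq:ass} and \eqref{eq:idempotent} for the module structure and the absorber, invoking the base change property \eqref{eq:basechange} to identify the induced ternary operation $[t\act_e n,t\act_e f,f]$ with $\Lambda(t,f,n)$, and unwinding definitions for the last identity. The only quibble is a terminological slip where you say ``first argument equal to $e$'' for the associativity check when you mean the middle (first module) argument, but the intended computation is clear and correct.
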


\begin{proof}
Observe that since $\Lambda$ is a heap homomorphism on the first and third entries and it is $T$-associative, $\Lambda(-,e,-)$ is a $T$-action on $M$. Moreover, $\Lambda(t,e,e)=e$, for all $t\in T$, by \eqref{eq:idempotent}. Thus $e$ is an absorber for $\Lambda(-,e,-)$. If we consider $\triangleright\colon T \times M \times M \to M$ associated with $\act_e = \Lambda(-,e,-)$ as in Proposition \ref{prop:homia}, then 
\[t\triangleright_m n = \big[t\act_e n, t \act _e m,m\big] = \Big[\Lambda(t,e,n),\Lambda(t,e,m),m\Big] \stackrel{\eqref{eq:basechange}}{=} \Lambda(t,m,n)\]
for any $t\in T$ and $m,n\in M$. Therefore, $\triangleright=\Lambda$ and $\Hh\big(M,\Lambda(-,e,-)\big)=(M,\Lambda)$. Finally, in $\big(\Hh(M,\cdot), \act_e\big)$, for all $m \in M$ and $t \in T$,
\[t \act_e m = \triangleright\big(t,e,m\big) = t \triangleright_e m\]
and hence $\big(\Hh(M,\cdot);e\big) = (M,\triangleright_e)$.
\end{proof}

In parallel with heaps and groups, the construction assigning $(M,\Lambda)$ to $\big(M,\act_e\big)$ from Lemma \ref{lem:homtoind} is not functorial, as it depends on an arbitrary choice.

Put together, Proposition~\ref{prop:homia} and Lemma~\ref{lem:homtoind} immediately yield
\begin{corollary}\label{cor:ind}
Every heap of $T$-modules $(M,\Lambda)$ is the heap of $T$-modules $\Hh\big(M,\triangleright_e\big)$ associated with an induced action $\triangleright_e$ on some $T$-module $M$ (with absorber).
\end{corollary}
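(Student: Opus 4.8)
The plan is to simply assemble the two cited results, since Corollary~\ref{cor:ind} is a direct corollary and all the genuine work has already been carried out. Given an arbitrary heap of $T$-modules $(M,\Lambda)$, I would begin by fixing any element $e \in M$ (the statement allows us to choose the witnessing absorber freely). Applying Lemma~\ref{lem:homtoind} to this choice produces a $T$-module structure $\act_e = \Lambda(-,e,-)$ on $M$ for which $e$ is an absorber, together with the crucial identity $\Hh\big(M,\act_e\big) = (M,\Lambda)$. This already exhibits $(M,\Lambda)$ as lying in the image of the functor $\Hh$ from Proposition~\ref{prop:homia}; what remains is to recognise that the $T$-module structure we have used is, in fact, an \emph{induced} one, which is exactly the form demanded by the statement.

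For that last point I would invoke Lemma~\ref{lem:absorber}: since $e$ is an absorber for the $T$-module $(M,\act_e)$, that lemma guarantees $(M,\act_e) = (M,\triangleright_e)$, i.e.\ the chosen action coincides with its own $e$-induced action. Consequently $\act_e = \triangleright_e$ is an induced action, and substituting this into the identity above yields
\[
\Hh\big(M,\triangleright_e\big) = \Hh\big(M,\act_e\big) = (M,\Lambda),
\]
which is precisely the assertion that every heap of $T$-modules arises as $\Hh$ of a $T$-module (with absorber) equipped with an induced action. This completes the argument.

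The only conceptual subtlety — and the one place a reader might hesitate — is the step identifying the ad hoc action $\act_e$ built in Lemma~\ref{lem:homtoind} with a bona fide induced action $\triangleright_e$; this is not automatic from the definitions but follows cleanly from the absorber characterisation in Lemma~\ref{lem:absorber}. Everything else is bookkeeping, so I would expect the write-up to be only a few lines, with no calculations beyond quoting the two lemmas and chaining the resulting equalities.
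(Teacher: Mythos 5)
Your argument is correct and follows the paper's own route: the paper derives the corollary by simply combining Proposition~\ref{prop:homia} and Lemma~\ref{lem:homtoind}, and your chain $\Hh(M,\triangleright_e)=\Hh(M,\act_e)=(M,\Lambda)$ is exactly that, with the appeal to Lemma~\ref{lem:absorber} making explicit the (paper-implicit) point that $\act_e$ coincides with its own $e$-induced action because $e$ is an absorber. The only thing left unsaid in both your write-up and the paper's is the degenerate case $M=\varnothing$, where no $e$ can be chosen; this is a cosmetic issue, not a flaw in your argument.
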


\begin{remark}\label{rem:nec?}
It is worth to mention here that two different $T$-modules can have the same induced module structure. Consider a $T$-module $(M,\cdot)$ without an absorber and its induced $T$-module $(M,\triangleright_e)$, for some $e\in M$. Then $((M,\cdot),\triangleright_e)=(M,\triangleright_e)$, but $(M,\triangleright_e)$ is not even isomorphic with $(M,\cdot)$ as there is no $T$-module homomorphism from $(M,\triangleright_e)$ to $(M,\cdot)$.
\end{remark}

The correspondence of Corollary~\ref{cor:ind} can be used to prove the following entropic or interchange property of heaps of modules.
\begin{lemma}\label{lem:entropy}
Let $(M,\Lambda)$ be a heap of $T$-modules. Let $t,t'\in T$ be such that for some $e\in T$ and all $m\in M$,
\begin{equation}\label{commut}
\Lambda(tt',e,m) = \Lambda (t't,e,m).
\end{equation}
Then, for all $m,m', m'', n\in M$,
\[
\Lambda\Big(t, \Lambda\left(t',m,m'\right), \Lambda\left(t',m'',n\right)\Big) = \Lambda\Big(t', \Lambda\left(t,m,m''\right), \Lambda\left(t,m',n\right)\Big).
\]
\end{lemma}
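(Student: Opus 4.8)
The plan is to use Corollary~\ref{cor:ind} to transport the entire statement into a retract, where $\Lambda$ becomes an additive expression and the computation reduces to bookkeeping in an abelian group. Fix the element $e\in M$ appearing in the hypothesis \eqref{commut} and, following Lemma~\ref{lem:homtoind}, endow $M$ with the $T$-module structure $\act_e=\Lambda(-,e,-)$, for which $e$ is an absorber. I would then work inside the $e$-retract $\gG(M;e)$, writing $+$ and $-$ for its group operation and $0=e$ for its neutral element, so that $[a,b,c]=a-b+c$. Because $e$ is an absorber, $(M,\act_e)$ is a $T$-group in the sense of \eqref{eq:TGrp} (see Theorem~\ref{thm:Tgroups}), and hence for all $s,t\in T$ and $x,y\in M$ one has the linearity rules $t\act_e(x+y)=t\act_e x+t\act_e y$, $t\act_e 0=0$, and $(st)\act_e x=s\act_e(t\act_e x)$.

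Two translations then carry all the conceptual content. First, the base change property \eqref{eq:basechange} rewrites every value of $\Lambda$ additively as
\[
\Lambda(t,m,n)=t\act_e n-t\act_e m+m
\]
in $\gG(M;e)$. Second, the hypothesis \eqref{commut} says exactly that $(tt')\act_e m=(t't)\act_e m$ for all $m\in M$, since $\Lambda(tt',e,m)=(tt')\act_e m$ and $\Lambda(t't,e,m)=(t't)\act_e m$; in other words $tt'$ and $t't$ act as the same operator on $M$.

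With these in hand I would substitute the additive form of $\Lambda$ into both sides of the claimed identity, push the outer action through each sum by linearity, convert every nested term $t\act_e(t'\act_e x)$ into $(tt')\act_e x$, and use the hypothesis to replace each occurrence of $(t't)\act_e x$ by $(tt')\act_e x$. Both sides then collapse to the same integer combination of the nine elements $(tt')\act_e n$, $(tt')\act_e m$, $(tt')\act_e m'$, $(tt')\act_e m''$, $t\act_e m$, $t\act_e m''$, $t'\act_e m$, $t'\act_e m'$ and $m$, and a term-by-term comparison of coefficients finishes the proof. The only labour is this final bookkeeping, and there is no genuine obstacle: once the identity is expressed additively, commutativity of $\gG(M;e)$ permits free reordering, and the hypothesis \eqref{commut} is precisely what is needed to match the two mixed terms $(tt')\act_e m'$ and $(tt')\act_e m''$ across the two sides.
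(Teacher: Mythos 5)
Your proposal is correct and follows essentially the same route as the paper: the paper likewise invokes Corollary~\ref{cor:ind} to assume $\Lambda$ is the induced action on a $T$-module with an absorber, observes via the base change property that \eqref{commut} forces $tt'$ and $t't$ to act identically, and then verifies the identity by expanding both sides and reshuffling in the abelian heap — your passage to the retract $\gG(M;e)$ merely rewrites the same bookkeeping additively. The expansion does close up as you claim, so there is no gap.
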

\begin{proof}
With no loss of generality we may assume that $\Lambda$ is given by an induced action on a $T$-module $M$ (with absorber $a$), that is, 
$$
\Lambda(t,m,n) = t \triangleright_mn = [t\!\cdot\!n,t\!\cdot\!m,m].
$$
Furthermore, the base change property implies that if the equality \eqref{commut} holds for one $e\in M$, then it does so for all $e\in M$. Thus, in particular,
\begin{equation}\label{comm.m}
tt'\!\cdot\! m = [tt'\!\cdot\!m,tt'\!\cdot\!a,a] = \Lambda(tt',a,m) = \Lambda(t't,a,m) = [t't\!\cdot\!m,t't\!\cdot\!a,a] = t't\!\cdot\!m,
\end{equation}
for all $m\in M$. 
Therefore, using the distributive laws of the $T$-action to derive the first and third equalities and the fact that $M$ is an abelian heap and \eqref{comm.m} to derive the middle one, we can compute,
$$
\begin{aligned}
\Lambda\Big(t, \Lambda\left(t',m,m'\right), &\; \Lambda\left(t',m'',n\right)\Big)\\ &= 
\Big[[tt'\!\cdot\!n,tt'\!\cdot\!m'',t\!\cdot\!m''],[tt'\!\cdot\!m',tt'\!\cdot\!m,t\!\cdot\!m],[t'\!\cdot\!m',t'\!\cdot\!m,m]\Big]\\
&= \Big[[t't\!\cdot\!n,t't\!\cdot\!m',t'\!\cdot\!m'],[t't\!\cdot\!m'',t't\!\cdot\!m,t'\!\cdot\!m],[t\!\cdot\!m'',t\!\cdot\!m,m]\Big]\\
&= \Lambda\Big(t', \Lambda\left(t,m,m''\right), \Lambda\left(t,m',n\right)\Big),
\end{aligned}
$$
as required.
\end{proof}

\begin{remark}
Recall that if $M,N$ are abelian heaps, then we may perform the tensor product $M \otimes N$ of abelian heaps and this satisfies properties similar to those of the tensor product of modules. With this convention and in view of Lemma \ref{lem:homtoind}, a heap of $T$-modules is an abelian heap $(M,[-,-,-])$ together with a heap homomorphism
\[\Lambda \colon  T \otimes M \otimes M \to M, \qquad t \otimes m \otimes n \lto t \triangleright_mn\]
which is $T$-associative
\[ts \triangleright_m n = t \triangleright_m \Big(s \triangleright_m n \Big)\]
and satisfies the base change property
\[t \triangleright_m n = \big[t\triangleright_en,t \triangleright_em,m\big]\]
for all $m,n,e \in M$ and $t,s \in T$. 
\end{remark}

{

\subsection{Isotropy and contractibility for heaps of modules}\label{ssec:UZprop}

Recall from Lemma \ref{lem:homtoind} that if $M$ is a heap of $T$-modules, then $(M,\cdot_e)$ denotes the associated $T$-module with $t\cdot_em = \Lambda(t,e,m)$ for all $t \in T$, $e,m \in M$.

As modules over trusses do not need to be isotropic or contractible in general, the same happens for heaps of modules. Therefore, we are led to the following lemmata and definitions.

\begin{lemma}\label{lem:isotropicheap}
Let $T$ be a truss. For any heap of $T$-modules $M$, the set
\[\mathrm{Stab}(M) \coloneqq \{u\in T\; |\; \Lambda(u,m,n) = n, \; \mbox{for all $m,n\in M$}\}\]
is a sub-truss and, if non-empty, a paragon of $T$.
\end{lemma}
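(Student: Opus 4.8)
The plan is to reduce the statement to the analogous result for modules, namely Lemma~\ref{lem:stab}, by exploiting the correspondence between heaps of $T$-modules and induced actions established in Lemma~\ref{lem:homtoind} and Corollary~\ref{cor:ind}. First I would fix an arbitrary element $e \in M$ and consider the associated $T$-module $(M, \cdot_e)$, where $t \cdot_e m = \Lambda(t,e,m)$, as in Lemma~\ref{lem:homtoind}. The key observation is that the condition defining $\mathrm{Stab}(M)$ for the heap of modules should coincide with (or be closely related to) the condition defining $\mathrm{Stab}(M,\cdot_e)$ for the module, so that the sub-truss and paragon properties can be imported directly.

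The main step is to show that $u \in \mathrm{Stab}(M)$, i.e.\ $\Lambda(u,m,n) = n$ for all $m,n \in M$, is equivalent to a statement phrased purely in terms of a single induced module. Using the base change property \eqref{eq:basechange}, we have $\Lambda(u,m,n) = \big[\Lambda(u,e,n),\Lambda(u,e,m),m\big] = \big[u \cdot_e n, u \cdot_e m, m\big]$, so the requirement $\Lambda(u,m,n) = n$ becomes $[u \cdot_e n, u \cdot_e m, m] = n$ for all $m,n$. Setting $m = e$ and using $u \cdot_e e = \Lambda(u,e,e) = e$ from \eqref{eq:idempotent} gives $u \cdot_e n = n$ for all $n$, i.e.\ $u \in \mathrm{Stab}(M,\cdot_e)$. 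Conversely, if $u \in \mathrm{Stab}(M,\cdot_e)$, then $u \cdot_e n = n$ and $u \cdot_e m = m$, so the bracket collapses to $[n,m,m] = n$ by the Mal'cev identities, recovering $\Lambda(u,m,n) = n$. Hence $\mathrm{Stab}(M) = \mathrm{Stab}(M,\cdot_e)$ as subsets of $T$, independently of the choice of $e$.

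Once this identification is secured, the conclusion is immediate: by Lemma~\ref{lem:stab} applied to the $T$-module $(M,\cdot_e)$, the set $\mathrm{Stab}(M,\cdot_e)$ is a sub-truss of $T$ and, if non-empty, a paragon. Transporting this back along the equality $\mathrm{Stab}(M) = \mathrm{Stab}(M,\cdot_e)$ yields exactly the assertion for the heap of $T$-modules $M$.

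The step I expect to require the most care is the equivalence $\mathrm{Stab}(M) = \mathrm{Stab}(M,\cdot_e)$, and in particular verifying that the heap-of-modules condition (a universally quantified statement over \emph{two} variables $m,n$) genuinely reduces to the module condition (a statement over a single variable). The base change property is precisely the tool that makes this reduction work, since it expresses the two-variable action $\Lambda(u,m,n)$ in terms of the one-variable induced action at the fixed basepoint $e$; without it the two notions of stabilizer would not coincide. A minor point to check is that the argument is genuinely independent of $e$, but this follows either from the computation above holding for every $e$, or alternatively from Proposition~\ref{prop:unisim}\eqref{prop:unisim:eq2}.
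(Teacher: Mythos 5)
Your proposal is correct and follows essentially the same route as the paper: the paper also proves $\mathrm{Stab}(M)=\mathrm{Stab}(M,\act_e)$ via the base change property (the inclusion $\mathrm{Stab}(M)\subseteq\mathrm{Stab}(M,\act_e)$ being trivial and the reverse one given by exactly your computation $\Lambda(u,m,n)=[u\act_en,u\act_em,m]=[n,m,m]=n$) and then invokes Lemma~\ref{lem:stab}. The only detail the paper adds is the degenerate case $M=\varnothing$, where no $e$ can be chosen but $\mathrm{Stab}(M)=T$ and the claim holds trivially; you should include that one line since your argument starts by fixing $e\in M$.
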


\begin{proof}
First of all, notice that if $M = \varnothing$, then $\mathrm{Stab}(M) = T$ and the statement is true. If $M\neq \varnothing$, then for any $e \in M$, $\mathrm{Stab}(M) = \mathrm{Stab}(M,\act_e)$, by the base change property \eqref{eq:basechange}, where
\[\mathrm{Stab}(M,\act_e) = \{u\in T\; |\; \Lambda(u,e,n) = n, \; \mbox{for all $n\in M$}\},\]
(see Lemma \ref{lem:stab}). 
The inclusion $\mathrm{Stab}(M) \subseteq \mathrm{Stab}(M,\act_e)$ holds trivially, while, for every $u \in \mathrm{Stab}(M,\act_e)$,
\[\Lambda(u,m,n) \stackrel{\eqref{eq:basechange}}{=} \big[\Lambda(u,e,n),\Lambda(u,e,m),m\big] = [n,m,m] \stackrel{\eqref{eq:malcev}}{=} n\]
for all $n \in M$ and hence $\mathrm{Stab}(M,\act_e) \subseteq \mathrm{Stab}(M)$, too. Therefore, the statement follows by Lemma \ref{lem:stab}.
\end{proof}

\begin{definition}
For a heap of $T$-modules $M$,
$\mathrm{Stab}(M)$ is called the \emph{stabilizer} or \emph{isotropy paragon} of $M$.
~
A heap of $T$-modules $M$ is said to be {\em isotropic} if $\mathrm{Stab}(M)$ is non-empty.
~
The category of {\em isotropic heaps of $T$-modules} $T\HMod_{\mathbf{is}}$ is the full subcategory of heaps of $T$-modules whose objects are isotropic heaps of $T$-modules. \label{p-isotropic-h}
\end{definition}

\begin{remark}
It follows from the proof of Lemma \ref{lem:isotropicheap} that a (non-empty) heap of $T$-modules $M$ is isotropic if and only if $\big(M,\act_e\big)$ is an isotropic $T$-module for all $e \in M$, if and only if there exists $e \in M$ such that $\big(M,\act_e\big)$ is isotropic.
\end{remark}

\begin{lemma}\label{lem:annihilator}
Let $T$ be a truss. For any heap of $T$-modules $M$, the set
\[\mathrm{Ann}(M) \coloneqq \big\{z\in T \mid \Lambda(z,m,n) = m, \; \mbox{for all $m,n\in M$}\big\},\]
 if non-empty, is a two-sided ideal in $T$.
\end{lemma}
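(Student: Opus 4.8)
The plan is to reduce the claim to its module-theoretic counterpart, Lemma~\ref{lem:annMod}, by fixing a middle element and passing to the associated $T$-module with absorber.

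First I would dispose of the degenerate case: if $M = \varnothing$ the defining condition is vacuous, so $\mathrm{Ann}(M) = T$, which (when non-empty) is trivially a two-sided ideal of itself. Hence I may assume $M \neq \varnothing$ and fix an arbitrary $e \in M$. By Lemma~\ref{lem:homtoind}, the pair $(M,\cdot_e)$ with $t\cdot_e m = \Lambda(t,e,m)$ is a $T$-module for which $e$ is an absorber.

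The core step is to identify $\mathrm{Ann}(M)$ with the $e$-annihilator
\[
\mathrm{Ann}_e(M,\cdot_e) = \{z \in T \mid \Lambda(z,e,m) = e \text{ for all } m \in M\}
\]
of this module. The inclusion $\mathrm{Ann}(M) \subseteq \mathrm{Ann}_e(M,\cdot_e)$ is immediate upon setting $m = e$ in the defining condition $\Lambda(z,m,n) = m$. For the reverse inclusion I would use the base change property \eqref{eq:basechange}: if $\Lambda(z,e,n) = e$ for all $n \in M$, then for all $m,n \in M$,
\[
\Lambda(z,m,n) \stackrel{\eqref{eq:basechange}}{=} \big[\Lambda(z,e,n),\Lambda(z,e,m),m\big] = [e,e,m] \stackrel{\eqref{eq:malcev}}{=} m,
\]
so that $z \in \mathrm{Ann}(M)$. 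Thus the two sets coincide; note this identification is independent of the chosen $e$ precisely because $\mathrm{Ann}(M)$ was defined without reference to $e$.

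Finally, since $e$ is an absorber of $(M,\cdot_e)$ and $\mathrm{Ann}(M) = \mathrm{Ann}_e(M,\cdot_e)$ is non-empty by hypothesis, Lemma~\ref{lem:annMod} yields that it is a two-sided ideal of $T$, which is exactly the assertion. The only real content is the base-change computation above; everything else is an invocation of the module-level Lemma~\ref{lem:annMod}, and I expect no genuine obstacle beyond getting that identification right.
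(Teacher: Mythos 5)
Your proposal is correct and follows essentially the same route as the paper: reduce the empty case trivially, identify $\mathrm{Ann}(M)$ with $\mathrm{Ann}_e(M,\act_e)$ via the base change property, and invoke Lemma~\ref{lem:annMod} together with the fact that $e$ is an absorber for $(M,\act_e)$. You merely spell out the base-change computation that the paper leaves implicit.
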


\begin{proof}
Analogously to the proof of Lemma \ref{lem:isotropicheap}, if $M = \varnothing$, then $\mathrm{Ann}(M) = T$ and the statement is trivially true. If $M \neq \varnothing$, then for every $e \in M$ we have that $\mathrm{Ann}(M) = \mathrm{Ann}_e(M,\act_e)$ by the base change property \eqref{eq:basechange}, where
\[
\mathrm{Ann}_e(M,\act_e) = \big\{ z \in T \mid z \act_e m = e \textrm{ for all } m \in M \big\}.
\]
The latter is a two-sided ideal by Lemma \ref{lem:annMod}.
\end{proof}

\begin{definition}
For a heap of $T$-modules $M$,
$\mathrm{Ann}(M)$ is called the \emph{annihilator} or \emph{contracting ideal} of $M$.
~
A heap of $T$-modules $M$ is said to be {\em contractible} if $\mathrm{Ann}(M)$ is non-empty.
~
The category of {\em contractible heaps of $T$-modules} $T\HMod^{\mathbf{cn}}$ is the full subcategory of heaps of $T$-modules whose objects are contractible heaps of $T$-modules.\label{p-contr-mod} 
\end{definition}

\begin{proposition}
Let $T$ be a non-empty truss. The following statements are equivalent for a non-empty heap of $T$-modules $M$:
\begin{enumerate}[label=(\arabic*),ref=(\arabic*),leftmargin=0.8cm]
\item\label{item:zero1} $M$ is contractible. 
\item\label{item:zero2} For every $e \in M$, the $T$-module $\big(M,\act_e\big)$ with absorber $e \in M$ is $e$-contractible.
\item\label{item:zero3} There exists  $e \in M$ such that $(M,\act_e)$ is $e$-contractible.
\end{enumerate}
\end{proposition}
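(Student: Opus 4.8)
The plan is to reduce all three conditions to the single set-theoretic identity
\[
\mathrm{Ann}(M) = \mathrm{Ann}_e(M,\act_e),
\]
valid for \emph{every} $e \in M$, which already surfaces in the proof of Lemma~\ref{lem:annihilator}. Once this identity is available, the three statements become literally the same non-emptiness assertion about the same subset of $T$, merely phrased with different quantifiers over $e$; the hypothesis that $M$ be non-empty is precisely what guarantees that such an $e$ exists at all.

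First I would establish the identity by a double inclusion for a fixed $e \in M$. If $z \in \mathrm{Ann}(M)$, then in particular $\Lambda(z,e,n) = e$ for all $n$, so $z \act_e n = e$ and thus $z \in \mathrm{Ann}_e(M,\act_e)$. Conversely, if $z \in \mathrm{Ann}_e(M,\act_e)$, meaning $\Lambda(z,e,n) = e$ for all $n \in M$, then the base change property \eqref{eq:basechange} together with the Mal'cev identities \eqref{eq:malcev} yields
\[
\Lambda(z,m,n) = \big[\Lambda(z,e,n),\Lambda(z,e,m),m\big] = [e,e,m] = m
\]
for all $m,n \in M$, so $z \in \mathrm{Ann}(M)$. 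Hence $\mathrm{Ann}(M) = \mathrm{Ann}_e(M,\act_e)$ for each $e$, exactly as in Lemma~\ref{lem:annihilator}.

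With the identity in hand I would close the cycle $\ref{item:zero1} \Rightarrow \ref{item:zero2} \Rightarrow \ref{item:zero3} \Rightarrow \ref{item:zero1}$. For $\ref{item:zero1} \Rightarrow \ref{item:zero2}$: contractibility of $M$ means $\mathrm{Ann}(M) \neq \varnothing$, whence $\mathrm{Ann}_e(M,\act_e) = \mathrm{Ann}(M) \neq \varnothing$ for every $e$, i.e.\ each $(M,\act_e)$ is $e$-contractible. The step $\ref{item:zero2} \Rightarrow \ref{item:zero3}$ is immediate because $M$ is non-empty, so at least one $e$ exists. Finally, for $\ref{item:zero3} \Rightarrow \ref{item:zero1}$: if $(M,\act_e)$ is $e$-contractible for some $e$, then $\mathrm{Ann}(M) = \mathrm{Ann}_e(M,\act_e) \neq \varnothing$, so $M$ is contractible.

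I do not anticipate a genuine obstacle here: all the content is concentrated in the short base-change computation above, which is moreover recycled from Lemma~\ref{lem:annihilator}. The one point to flag explicitly is the role of the non-emptiness of $M$ (and of $T$): it is what makes $\ref{item:zero2} \Rightarrow \ref{item:zero3}$ non-vacuous and what ensures the subset $\mathrm{Ann}_e(M,\act_e)$ is witnessed by an actually existing element $e$; for $M = \varnothing$ one has $\mathrm{Ann}(M) = T$ and no $e$ to speak of, so the equivalence would degenerate.
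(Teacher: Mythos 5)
Your proof is correct and follows essentially the same route as the paper: the paper likewise dispatches $\ref{item:zero1}\Rightarrow\ref{item:zero2}\Rightarrow\ref{item:zero3}$ as clear and settles $\ref{item:zero3}\Rightarrow\ref{item:zero1}$ by invoking the identity $\mathrm{Ann}(M)=\mathrm{Ann}_e(M,\act_e)$ from the proof of Lemma~\ref{lem:annihilator}. Your explicit double-inclusion verification of that identity via the base change property is exactly the computation underlying that lemma, so nothing is missing.
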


\begin{proof}
The chain of implications \ref{item:zero1} $\Rightarrow$ \ref{item:zero2} $\Rightarrow$ \ref{item:zero3} is clear, while to prove that \ref{item:zero3} $\Rightarrow$ \ref{item:zero1} is enough to note that $\mathrm{Ann}(M) = \mathrm{Ann}_e(M,\act_e) \neq \varnothing$, as in the proof of Lemma \ref{lem:annihilator}.
\end{proof}

\begin{theorem}\label{thm:IUmods}
Let $T$ be a truss and let $M$ be a heap of $T$-modules.
\begin{enumerate}[label=(\arabic*),ref=(\arabic*),leftmargin=0.8cm]
    \item\label{item:IUmod2} 
    If $M$ is an isotropic heap of $T$-modules 
    then, by considering $T$ as a module over itself, $\mathrm{Stab}(T) \subseteq \mathrm{Stab}(M)$. In particular, if $T$ is a unital truss with unit $1_T$, then $1_T \in \mathrm{Stab}(M)$.
    \item\label{item:IUmod3} If $M$ is an isotropic heap of $T$-modules and $T$ is a unital truss, then the set
    \[\mathrm{Stab}(M)^\times \coloneqq \mathrm{Stab}(M) \cap T^\times = \left\{t \in T^\times \mid \Lambda(t,m,n) = n \textrm{ for all } m,n \in M\right\}\]
    is a group with respect to the product in $T$.
    \item\label{item:IUmod1} If $M$ is a contractible heap of $T$-modules and the truss $T$ admits an absorber $0_T$, then $0_T \in \mathrm{Ann}(M)$.
\end{enumerate}
\end{theorem}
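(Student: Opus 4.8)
The plan is to reduce all three parts to the $T$-associativity axiom \eqref{eq:ass}, using in each case a single witness element supplied by the relevant hypothesis together with idempotency \eqref{eq:idempotent}. Recall that $\mathrm{Stab}(M)=\{u\in T\mid \Lambda(u,m,n)=n \text{ for all } m,n\in M\}$, while $\mathrm{Stab}(T)$, for $T$ regarded as a module over itself, is the set of left identities $\{v\in T\mid vt=t \text{ for all } t\in T\}$. For part \ref{item:IUmod2}, isotropy lets me fix some $u_0\in\mathrm{Stab}(M)$, so that $\Lambda(u_0,m,n)=n$ for all $m,n\in M$. Given $v\in\mathrm{Stab}(T)$ one has $vu_0=u_0$, whence
\[
\Lambda(v,m,n)=\Lambda\big(v,m,\Lambda(u_0,m,n)\big)\stackrel{\eqref{eq:ass}}{=}\Lambda(vu_0,m,n)=\Lambda(u_0,m,n)=n
\]
for all $m,n\in M$, so $v\in\mathrm{Stab}(M)$ and $\mathrm{Stab}(T)\subseteq\mathrm{Stab}(M)$. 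In the unital case $1_T\in\mathrm{Stab}(T)$, which gives the ``in particular'' assertion.

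For part \ref{item:IUmod3}, the previous part already places $1_T$ in $\mathrm{Stab}(M)^\times=\mathrm{Stab}(M)\cap T^\times$, so this set contains the identity of $T^\times$. Closure under products is immediate, since $\mathrm{Stab}(M)$ is a sub-truss by Lemma \ref{lem:isotropicheap} (hence multiplicatively closed) and $T^\times$ is closed under products. For inverses, given $t\in\mathrm{Stab}(M)^\times$ I would exploit $\Lambda(t,m,n)=n$ and \eqref{eq:ass} to compute
\[
\Lambda(t^{-1},m,n)=\Lambda\big(t^{-1},m,\Lambda(t,m,n)\big)\stackrel{\eqref{eq:ass}}{=}\Lambda(t^{-1}t,m,n)=\Lambda(1_T,m,n)=n,
\]
so that $t^{-1}\in\mathrm{Stab}(M)$ and therefore $t^{-1}\in\mathrm{Stab}(M)^\times$. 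Associativity being inherited from $T$, this exhibits $\mathrm{Stab}(M)^\times$ as a group.

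For part \ref{item:IUmod1}, contractibility lets me fix $z_0\in\mathrm{Ann}(M)$, so that $\Lambda(z_0,m,n)=m$ for all $m,n\in M$. Using that $0_T$ absorbs, so $0_Tz_0=0_T$, together with \eqref{eq:ass} and idempotency \eqref{eq:idempotent},
\[
\Lambda(0_T,m,n)=\Lambda(0_Tz_0,m,n)\stackrel{\eqref{eq:ass}}{=}\Lambda\big(0_T,m,\Lambda(z_0,m,n)\big)=\Lambda(0_T,m,m)\stackrel{\eqref{eq:idempotent}}{=}m,
\]
whence $0_T\in\mathrm{Ann}(M)$.

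These arguments are short precisely because each hypothesis furnishes the seed element needed to collapse a product inside $\Lambda$ through \eqref{eq:ass}. The only point requiring genuine care is to orient each application of $T$-associativity so that the witness occupies the \emph{inner} slot of $\Lambda$; there is no serious obstacle beyond this. It is worth stressing that the isotropy and contractibility hypotheses are essential, as they guarantee the existence of such a seed: for a non-empty, non-isotropic $M$ over a unital truss the inclusion in part \ref{item:IUmod2} already fails, since $1_T\in\mathrm{Stab}(T)$ but $\mathrm{Stab}(M)=\varnothing$.
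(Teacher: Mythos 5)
Your proposal is correct and follows essentially the same route as the paper's own proof: in each part you pick the witness supplied by isotropy (resp.\ contractibility), place it in the inner slot of $\Lambda$, and collapse via the $T$-associativity axiom \eqref{eq:ass}, with closure of $\mathrm{Stab}(M)^\times$ under products coming from Lemma \ref{lem:isotropicheap} and the inverse computation matching the paper's verbatim. Nothing further is needed.
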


\begin{proof}
\ref{item:IUmod2} If $u_T \in T$ is such that $u_T\, t=t$ for all $t \in T$ and $u_M \in T$ is such that $\Lambda(u_M,m,n) = n$ for all $m,n \in M$, then
\[\Lambda(u_T,m,n) = \Lambda\big(u_T,m,\Lambda(u_M,m,n)\big) \stackrel{\eqref{eq:ass}}{=} \Lambda(u_T\,u_M,m,n) = \Lambda(u_M,m,n) = n\]
for all $m,n\in M$ and hence $u_T \in \mathrm{Stab}(M)$.

\ref{item:IUmod3} In view of Lemma \ref{lem:isotropicheap} we know that $\mathrm{Stab}(M)^\times$ is closed under the product of $T$. In view of \ref{item:IUmod2}, we know that $1_T \in \mathrm{Stab}(M)^\times$. Finally, if $t \in \mathrm{Stab}(M)^\times$ then
\[\Lambda(t^{-1},m,n) = \Lambda\big(t^{-1},m,\Lambda(t,m,n)\big) \stackrel{\eqref{eq:ass}}{=} \Lambda(t^{-1}t,m,n) = \Lambda(1_T,m,n) = n\]
for all $m,n \in M$, whence $t^{-1} \in \mathrm{Stab}(M)^\times$ too.

\ref{item:IUmod1} Suppose that $z_M \in T$ satisfies $\Lambda(z_M,m,n) = m$, for all $m,n\in M$. Then
\[m \stackrel{\eqref{eq:idempotent}}{=} \Lambda(0_T,m,m) = \Lambda\big(0_T,m,\Lambda(z_M,m,n)\big) \stackrel{\eqref{eq:ass}}{=} \Lambda(0_T\,z_M,m,n) = \Lambda(0_T,m,n),\]
for all $m,n \in M$. Thus $0_T \in \mathrm{Ann}(M)$.
\end{proof}
}

\subsection{Crossed products by heaps of modules}

The following proposition extends the construction of \cite[Theorem~4.2]{BrzRyb:con} to heaps of modules.

\begin{proposition}\label{prop:cross}
Let $T$ be a non-empty truss and $(M,\Lambda)$ be a non empty heap of $T$-modules. Then, for all $e\in M$, the Cartesian product of heaps $M\times T$ is a truss with multiplication
\[
(m,s)(n,t) = ([\Lambda(s,e,n),e,m], st),
\]
for all $(m,s),(n,t) \in M\times T$. We denote this truss by $M\overset{e}{\rtimes} T$ and call it a {\em cross product} of $T$ with $M$.
\end{proposition}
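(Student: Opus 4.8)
The plan is to recognise the twisted multiplication as being built from the induced $T$-module action $\act_e = \Lambda(-,e,-)$ and thereby reduce everything to the module situation of \cite[Theorem~4.2]{BrzRyb:con}. By Lemma~\ref{lem:homtoind}, $(M,\act_e)$ is a $T$-module having $e$ as an absorber, so in particular $s\act_e e = e$ for all $s\in T$, and the multiplication reads $(m,s)(n,t) = ([s\act_e n, e, m], st)$. Since $M\times T$ carries the componentwise abelian-heap structure it is already an abelian heap, so it only remains to verify associativity of the multiplication and the two distributive laws.

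For associativity I would expand both $\big((m,s)(n,t)\big)(p,u)$ and $(m,s)\big((n,t)(p,u)\big)$. The second coordinate is $stu$ in either case by associativity in $T$. For the first coordinate, the left-associated product gives $[(st)\act_e p, e, [s\act_e n, e, m]]$, which collapses via heap associativity \eqref{eq:assoc} to $[[(st)\act_e p, e, s\act_e n], e, m]$; the right-associated product gives $[s\act_e[t\act_e p, e, n], e, m]$, and here I would use that $\act_e$ is a heap homomorphism in its module argument \eqref{eq:rightheap} together with $s\act_e e = e$ and the $T$-associativity $(st)\act_e p = s\act_e(t\act_e p)$ to rewrite $s\act_e[t\act_e p,e,n] = [(st)\act_e p, e, s\act_e n]$. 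The two first coordinates then coincide.

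The distributive laws reduce to two identities in the abelian heap $M$. For left distributivity, after invoking that $\act_e$ is a heap map in the module slot \eqref{eq:rightheap} and left distributivity in $T$, the required equality becomes $[[a_1,a_2,a_3],e,m] = [[a_1,e,m],[a_2,e,m],[a_3,e,m]]$ with $a_i = s\act_e n_i$, which holds because the translation $\tau_e^m\colon x\mapsto [x,e,m]$ is a homomorphism of heaps. For right distributivity, using that $\act_e$ is a heap map in its scalar slot \eqref{eq:leftheap} and right distributivity in $T$, the required equality becomes $[[b_1,b_2,b_3],e,[m_1,m_2,m_3]] = [[b_1,e,m_1],[b_2,e,m_2],[b_3,e,m_3]]$, which is exactly the medial (entropic) reshuffling law valid in any abelian heap.

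The computations themselves are routine once the reduction through $\act_e$ is in place; the point I expect to require most care is the bookkeeping of the nested ternary brackets, keeping precise track of where heap associativity \eqref{eq:assoc}, the absorber property $s\act_e e = e$, and especially the abelianity of $M$ (which is precisely what makes right distributivity go through) are each used. This is also where the genuine extension beyond \cite[Theorem~4.2]{BrzRyb:con} lies: there $\Lambda(s,e,-)$ was literally a module action, whereas here it must first be extracted from $\Lambda$ via Lemma~\ref{lem:homtoind} and the base change property \eqref{eq:basechange}.
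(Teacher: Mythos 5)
Your proposal is correct and follows essentially the same route as the paper: both reduce the twisted multiplication to the induced $T$-module action $\act_e=\Lambda(-,e,-)$ of Lemma~\ref{lem:homtoind} and identify the formula with the cross product of \cite[Theorem~4.2]{BrzRyb:con} (the paper does this for an arbitrary base point $a$ and then invokes the base change property \eqref{eq:basechange}, whereas you take $a=e$ and use the absorber property directly). The only difference is that the paper simply cites that theorem for the truss axioms, while you re-verify associativity and the two distributive laws by hand; your computations are correct.
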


\begin{proof}
By Lemma \ref{lem:homtoind}, if $(M,\Lambda)$ is a heap of $T$-modules and $a \in M$, then $(M,\act_a)$ is a $T$-module, where $t \act_a m = \Lambda(t,a,m)$ for all $t \in T, m\in M$. Therefore, by \cite[Theorem~4.2]{BrzRyb:con}, for any $e\in M$,  
$$
\begin{aligned}
(m,s)(n,t) &= \big([m,s\act_a e,s \act_a n],st\big) = \big([\Lambda(s,a,n),\Lambda(s,a,e),m],st\big) \\
 &= \big([\Lambda(s,a,n),\Lambda(s,a,e),e,e,m],st\big) \stackrel{\eqref{eq:basechange}}{=} \big([\Lambda(s,e,n),e,m],st\big)
\end{aligned}
$$
and $M \times T$ is a truss with the Cartesian product heap structure.
\end{proof}

The cross product by a heap of modules has similar properties to those of an extension of a truss by a module listed in \cite[Theorem~4.4]{BrzRyb:con}.

\begin{lemma}\label{lem:cross}
Let $T$ be a non-empty truss and $(M,\Lambda)$ be a non-empty heap of \mbox{$T$-modules.}
\begin{zlist}
\item $M$ is a left $M\overset{e}{\rtimes} T$-module by the action
$$
(m,t)\cdot n = [\Lambda(s,e,n),e,m].
$$
In particular $(m,t)\cdot e =m$, for all $m\in M$ and $t\in T$.
\item The trusses $M\overset{e}{\rtimes} T$ and $M\overset{f}{\rtimes} T$ are isomorphic for all $e,f\in M$.
\item For all $u\in T$, $M_u = M\times \{u\}$ is a paragon in $M\overset{e}{\rtimes} T$, and
$T \cong M\overset{e}{\rtimes} T/M_u$.
\item The heap $T_e =\{e\}\times T$ is a sub-truss and a left paragon of $M\overset{e}{\rtimes} T$, and  $M\cong M\overset{e}{\rtimes} T/T_e$ as left $M\overset{e}{\rtimes} T$-modules.
\end{zlist}
\end{lemma}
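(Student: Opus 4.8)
The plan is to reduce everything to the corresponding facts for the crossed product of a single induced module. The proof of Proposition~\ref{prop:cross} shows that $M\overset{e}{\rtimes} T$ is exactly the crossed product of the $T$-module $(M,\act_e)$---which carries $e$ as an absorber by Lemma~\ref{lem:homtoind}---with the chosen element $e$, in the sense of \cite[Theorem~4.2]{BrzRyb:con}. Hence statements (1), (3) and (4) are direct transcriptions of the analogous assertions in \cite[Theorem~4.4]{BrzRyb:con}: the underlying heap $M$ is a left module over its own crossed product, $M_u=M\times\{u\}$ is a paragon with $T\cong (M\overset{e}{\rtimes}T)/M_u$, and $T_e=\{e\}\times T$ is a sub-truss and left paragon with $M\cong (M\overset{e}{\rtimes}T)/T_e$ as left modules. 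The needed adjustments are purely cosmetic: the action of (1) is obtained from the one in \cite{BrzRyb:con} by rewriting $[m,t\act_e e,t\act_e n]$ through idempotency \eqref{eq:idempotent} (which gives $t\act_e e=\Lambda(t,e,e)=e$) and abelianity of the bracket as $[\Lambda(t,e,n),e,m]$; the special case $(m,t)\cdot e=[\Lambda(t,e,e),e,m]=[e,e,m]=m$ is again \eqref{eq:idempotent}; and that $T_e$ is a sub-truss follows from $(e,s)(e,t)=([\Lambda(s,e,e),e,e],st)=(e,st)$.

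The substantive point is statement (2), the only one that genuinely uses the heap-of-modules structure rather than a fixed induced module. Here I would first observe that $(M,\act_f)$ is precisely the $f$-induced module of the $T$-module $(M,\act_e)$: for all $t\in T$ and $m\in M$, the base change property \eqref{eq:basechange} gives
\[
t\triangleright_f m=[t\act_e m,\,t\act_e f,\,f]=\big[\Lambda(t,e,m),\Lambda(t,e,f),f\big]=\Lambda(t,f,m)=t\act_f m.
\]
Since $e$ is an absorber for $\act_e$, Lemma~\ref{lem:absorber} yields $(M,\act_e)=(M,\triangleright_e)$, so the translation $\tau_e^f\colon(M,\act_e)\to(M,\act_f)$ is an isomorphism of $T$-modules by \cite[Proposition~4.28]{Brz:par} (compare Proposition~\ref{prop:unisim}\eqref{prop:unisim:eq2}). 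Isomorphic $T$-modules produce isomorphic crossed products, which proves (2); concretely the isomorphism is $\Phi\colon M\overset{e}{\rtimes}T\to M\overset{f}{\rtimes}T$, $(m,t)\mapsto(\tau_e^f(m),t)=([m,e,f],t)$, with inverse $(m,t)\mapsto([m,f,e],t)$.

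To keep (2) self-contained I would verify directly that $\Phi$ is multiplicative. Working in the retract $\gG(M;e)$ (write $e=0$ additively, so that $\act_e$ is additive in its second argument and $\Lambda(s,f,x)=s\act_e x-s\act_e f+f$), both first coordinates of $\Phi\big((m,s)(n,t)\big)$ and $\Phi(m,s)\Phi(n,t)$ collapse to $s\act_e n+m+f$ after expanding via the heap-homomorphism property \eqref{eq:rightheap} and idempotency \eqref{eq:idempotent}, while the $T$-coordinate is $st$ on both sides. I expect this calculation to be the main---though entirely routine---obstacle, since it is precisely where the base change property does the work; everything else is bookkeeping or a direct appeal to \cite[Theorem~4.4]{BrzRyb:con}.
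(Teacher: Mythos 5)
Your proposal is correct and follows essentially the same route as the paper, whose proof is the single line ``apply \cite[Theorem~4.4]{BrzRyb:con} to $T$ and the $T$-module $(M,\act_a)$'' exactly as in the proof of Proposition~\ref{prop:cross}. The extra detail you supply for part (2) --- identifying $(M,\act_f)$ with the $f$-induced module of $(M,\act_e)$ via the base change property and transporting the crossed product along $\tau_e^f$ --- is a correct unpacking of what the cited theorem delivers, and your explicit verification of multiplicativity of $\Phi$ checks out.
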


\begin{proof}
As in the proof of Proposition~\ref{prop:cross}, apply \cite[Theorem~4.4]{BrzRyb:con} to $T$ and the $T$-module $(M,\act_a)$ for $a \in M$.
\end{proof}


\section{Heaps of modules and affine spaces}\label{sec:aff}
In this section we will focus on a geometric interpretation of heaps of modules over a truss. In particular, we will highlight the relationship between heaps of $\tT(R)$-modules and affine modules over a ring $R$. We will show that the straightforward extension of the notion of an affine module over a ring $R$ to that of an affine space over a truss $T$ by changing an $R$-module to a $T$-group (and it will be clear soon why this is the natural choice) provides for us a category which turns out to be equivalent to the category of heaps of $T$-modules. Thus affine spaces over a ring $R$ or over a field $\mathbb{F}$ are special cases of heaps of $\tT(R)$ or $\tT(\mathbb{F})$-modules, respectively.

\subsection{Morphisms of heaps of modules as translations}\label{ssec:RHMod}
The first observation to be made is that all morphisms  of heaps of $T$-modules are translations of $T$-modules morphisms, for a particular choice of $T$-modules.

To this aim, recall from \S\ref{ssec:Tgrps} how the category of $T$-groups is isomorphic to the category of $T$-modules with a chosen absorber and morphisms preserving the absorbers.

\begin{proposition}\label{prop:newTHmodTMod}
Let $T$ be a truss and let $M,N$ be two non-empty heaps of $T$-modules. Let $m \in M$ and $n \in N$. Then a function $f \colon M \lra N$ is a morphism of $T$-groups from $\big(\gG(M;m),\act_m\big)$ to $\big(\gG(N;n),\act_n\big)$ if and only if $f$ is a morphism of heaps of $T$-modules such that $f(m) = n$.
\end{proposition}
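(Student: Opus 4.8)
The plan is to unwind both sides into their two defining components -- a group-homomorphism condition and a $T$-linearity condition -- and to observe that, once the base point is matched, the only genuine gap between the two notions is the passage from $\Lambda$-compatibility at the single middle entry $m$ to $\Lambda$-compatibility at \emph{every} middle entry. This gap will be bridged by the base change property \eqref{eq:basechange}. As a preliminary sanity check, note that the statement is well posed: by Lemma~\ref{lem:homtoind} the pair $(M,\act_m)$ is a $T$-module with absorber $m$, so by Theorem~\ref{thm:Tgroups} the abelian group $\gG(M;m)$ equipped with $\act_m$ is indeed a $T$-group, and likewise for $N$. Throughout I would write $\act_m = \Lambda_M(-,m,-)$ and $\act_n = \Lambda_N(-,n,-)$, so that the retract operation of $\gG(M;m)$ is $[-,m,-]$ and a morphism of $T$-groups $(\gG(M;m),\act_m) \to (\gG(N;n),\act_n)$ is exactly a map $f$ that is a group homomorphism for the retract operations and satisfies $f(t\act_m x)=t\act_n f(x)$ for all $t\in T$, $x\in M$.

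For the forward implication I would assume $f$ is a morphism of heaps of $T$-modules with $f(m)=n$. Since $f$ is a heap homomorphism, $f([x,m,y])=[f(x),f(m),f(y)]=[f(x),n,f(y)]$, so $f$ preserves the retract operations and, as $f(m)=n$, the neutral elements; hence $f$ is a group homomorphism $\gG(M;m)\to\gG(N;n)$. Specialising the compatibility \eqref{eq:morph} to the middle entry $m$ and using $f(m)=n$ yields $f(\Lambda_M(t,m,x))=\Lambda_N(t,n,f(x))$, that is, $f(t\act_m x)=t\act_n f(x)$, which is $T$-linearity. Thus $f$ is a morphism of $T$-groups.

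For the converse I would assume $f$ is a morphism of $T$-groups. A group homomorphism preserves neutral elements, so $f(m)=n$, and by Proposition~\ref{prop:AhAb} a group homomorphism $\gG(M;m)\to\gG(N;n)$ is automatically a homomorphism of heaps $M\to N$. It remains to upgrade $T$-linearity -- which only controls the middle argument $m$ -- to the full compatibility \eqref{eq:morph}. Here is the crucial computation: applying the base change property \eqref{eq:basechange} in $M$ at the distinguished element $m$, then that $f$ is a heap homomorphism, then $T$-linearity, and finally \eqref{eq:basechange} in $N$ at $n$, I obtain for all $t\in T$ and $x,y\in M$
\[
\begin{aligned}
f\big(\Lambda_M(t,x,y)\big) &= f\Big(\big[\Lambda_M(t,m,y),\Lambda_M(t,m,x),x\big]\Big) \\
&= \big[f(\Lambda_M(t,m,y)),f(\Lambda_M(t,m,x)),f(x)\big] \\
&= \big[\Lambda_N(t,n,f(y)),\Lambda_N(t,n,f(x)),f(x)\big] \\
&= \Lambda_N\big(t,f(x),f(y)\big).
\end{aligned}
\]
This establishes \eqref{eq:morph}, so $f$ is a morphism of heaps of $T$-modules.

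The main obstacle is precisely this final computation. The definition of a morphism of heaps of $T$-modules demands compatibility with $\Lambda$ for every middle argument, whereas the $T$-group structure only provides compatibility for the single middle entry $m$. The base change property \eqref{eq:basechange} is exactly the assertion that $\Lambda(t,-,-)$ is reconstructible from its values $\Lambda(t,m,-)$ at one fixed middle entry, and this is what propagates the single-entry compatibility to all entries; everything else reduces to Proposition~\ref{prop:AhAb} and the identification $\act_m=\Lambda_M(-,m,-)$ from Lemma~\ref{lem:homtoind}.
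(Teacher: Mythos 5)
Your proof is correct and follows essentially the same route as the paper: the forward direction is identical (specialise \eqref{eq:morph} at the middle entry $m$), and your converse simply inlines what the paper obtains by citing the functoriality of $\cH$ from Proposition~\ref{prop:homia} together with the identity $\cH\big(M,\act_m\big)=(M,\Lambda)$ from Lemma~\ref{lem:homtoind} -- in both cases the base change property \eqref{eq:basechange} is the mechanism that upgrades compatibility at the single middle entry $m$ to compatibility at all middle entries. Your explicit four-line computation is a valid, self-contained substitute for that citation.
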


\begin{proof}
If $f \in T\HMod(M,N)$ and $f(m) = n$, then $f\in \heap(M,N)$ and
\[f\big(t \act_m m'\big) = f\big(\Lambda_M(t,m,m')\big) = \Lambda_N\big(t,n,f(m')\big) = t \act_n f(m')\]
for all $m'\in M$, that is, it is a morphism of $T$-modules from $(M,\act_m)$ to $(N,\act_n)$. Moreover, it preserves the chosen absorbers $m \in M$ and $n \in N$, whence it is a morphism of $T$-groups. Conversely, if $f$ is a morphism of $T$-groups from $\big(\gG(M;m),\act_m\big)$ to $\big(\gG(N;n),\act_n\big)$, then $f = \cH(f)$ is a morphism of heaps of $T$-modules from $\cH\big(\hH\big(\gG(M;m)\big),\act_m\big) = M$ to $\cH\big(\hH\big(\gG(N;n)\big),\act_n\big) = N$ and it clearly satisfies $f(m) = n$.
\end{proof}

The following corollaries of Proposition \ref{prop:newTHmodTMod} highlight how heaps of modules behave as affine versions of $T$-groups. 

\begin{corollary}\label{cor:THmodTMod}
Let $T$ be a truss, $M,N$ be non-empty heaps of $T$-modules and let $f \colon  M \lra N$ be a function. Then 
\[f \in T\HMod(M,N) \quad  \iff \quad \tau_{f(m)}^{n} \circ f \in T\GMod\Big(\big(\gG(M;m),\act_m\big),\big(\gG(N;n),\act_n\big)\Big),\] 
for all $m\in M$ and $n\in N$. In particular, $f\colon M \lra N$ is a morphism of heaps of $T$-modules if and only if $f(-) = F(-) +_n f(m)$ for some morphism of $T$-groups $F\colon \big(\gG(M;m),\act_m\big) \lra \big(\gG(N;n),\act_n\big)$.
\end{corollary}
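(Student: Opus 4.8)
The plan is to reduce everything to Proposition~\ref{prop:newTHmodTMod} by composing $f$ with suitable translation automorphisms, in exact parallel with the passage from Proposition~\ref{prop:AhAb} to Corollary~\ref{cor:AhAb}. The essential ingredient is that, since $N$ is a heap of $T$-modules, the base change property~\eqref{eq:basechange} holds in $N$; hence by Lemma~\ref{lem:trans} every translation automorphism of $N$ is in fact a morphism of heaps of $T$-modules. In particular, both $\tau_{f(m)}^{n}$ and its inverse $\tau_{n}^{f(m)}$ lie in $T\HMod(N,N)$, and this is what will let me move freely between $f$ and $\tau_{f(m)}^{n}\circ f$ without leaving the category of heaps of $T$-modules.

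First I would prove the biconditional for fixed $m \in M$ and $n \in N$. For the forward implication, assume $f \in T\HMod(M,N)$. Then $\tau_{f(m)}^{n}\circ f$ is a composite of morphisms of heaps of $T$-modules, hence itself belongs to $T\HMod(M,N)$, and it sends $m$ to $[f(m),f(m),n]=n$ by the Mal'cev identities~\eqref{eq:malcev}. Applying Proposition~\ref{prop:newTHmodTMod} to the morphism $\tau_{f(m)}^{n}\circ f$, which maps $m$ to $n$, shows that it is a morphism of $T$-groups from $\big(\gG(M;m),\act_m\big)$ to $\big(\gG(N;n),\act_n\big)$. Conversely, if $\tau_{f(m)}^{n}\circ f$ is such a $T$-group morphism, then Proposition~\ref{prop:newTHmodTMod} gives $\tau_{f(m)}^{n}\circ f \in T\HMod(M,N)$; composing on the left with $\tau_{n}^{f(m)}$ and using that $\tau_{n}^{f(m)} = \big(\tau_{f(m)}^{n}\big)^{-1}$, so that $\tau_{n}^{f(m)}\circ\tau_{f(m)}^{n} = \id$, recovers $f$ as a composite of two morphisms of heaps of $T$-modules, whence $f \in T\HMod(M,N)$.

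For the \emph{in particular} clause I would simply re-read the biconditional. Setting $F \coloneqq \tau_{f(m)}^{n}\circ f$, the equivalence just established says that $f \in T\HMod(M,N)$ precisely when $F$ is a morphism of $T$-groups, and in that case $f = \tau_{n}^{f(m)}\circ F$, that is, $f(x) = [F(x),n,f(m)] = F(x) +_n f(m)$ for all $x \in M$, where $+_n$ is the group operation of the retract $\gG(N;n)$. The reverse direction is immediate: given a $T$-group morphism $F$ with $f(-) = F(-) +_n f(m) = \tau_{n}^{f(m)}\circ F$, both $F$ (by Proposition~\ref{prop:newTHmodTMod}) and $\tau_{n}^{f(m)}$ (by Lemma~\ref{lem:trans}) lie in $T\HMod$, so their composite $f$ does too.

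I do not anticipate a genuine obstacle: the mathematical content is entirely carried by Proposition~\ref{prop:newTHmodTMod} together with the observation that translations are morphisms of heaps of $T$-modules. The only points requiring care are the bookkeeping of which translation acts on which side, the identification $\tau_{n}^{f(m)} = \big(\tau_{f(m)}^{n}\big)^{-1}$ from \eqref{comp.trans}, and recognising $[F(x),n,f(m)]$ as the sum $F(x) +_n f(m)$ in the $n$-retract of $N$.
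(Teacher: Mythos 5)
Your argument is correct and follows essentially the same route as the paper: both reduce to Proposition~\ref{prop:newTHmodTMod} after observing via Lemma~\ref{lem:trans} that $\tau_{f(m)}^{n}$ is an invertible morphism of heaps of $T$-modules sending $f(m)$ to $n$, and both obtain the \emph{in particular} clause by writing $f = \tau_{n}^{f(m)}\circ F$ and reading off $[F(-),n,f(m)] = F(-) +_n f(m)$. No gaps.
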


\begin{proof}
Recall from Lemma \ref{lem:trans} that $\tau_{f(m)}^n \in T\HMod(N,N)^{\times}$ and notice that $\big(\tau_{f(m)}^{n} \circ f\big)(m) = n$. Thus, by Proposition \ref{prop:newTHmodTMod}, $\tau_{f(m)}^{n} \circ f \in T\GMod\Big(\big(\gG(M;m),\act_m\big),\big(\gG(N;n),\act_n\big)\Big)$ if and only if $\tau_{f(m)}^{n} \circ f \in T\HMod(M,N)$, if and only if $f \in T\HMod(M,N)$.

To conclude, observe that by setting $F \coloneqq \tau^n_{f(m)} \circ f$ we find
\[
f(-) = \left(\tau_n^{f(m)} \circ \tau_{f(m)}^n \circ f\right)(-) = \left[F(-),n,f(m)\right] = F(-) +_n f(m). \qedhere
\]
\end{proof}

\begin{corollary}\label{cor:ringshoms}
Let $(G,+,0_G,\cdot)$ and $(H,+,0_H,\cdot)$ be $T$-groups, for some truss $T$. Then
\[
f \in T\HMod\big(\cH(\hH(G)), \cH(\hH(H))\big) \quad \iff \quad \tau_{f(0_G)}^{0_H} \circ f \in T\GMod\big(G,H\big).
\]
In particular, for $M,N$ two $T$-modules we have that for any $m\in M$ and $n\in N$,
\[
f\in T\HMod\big(\Hh(M),\Hh(N)\big) \iff \tau_{f(m)}^n\circ f \in T\GMod\big(\big(\gG(M;m),\triangleright_m\big),\big(\gG(N;n),\triangleright_n\big)\big).
\]
\end{corollary}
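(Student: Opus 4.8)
\textbf{The plan} is to recognise both equivalences as immediate instances of Corollary~\ref{cor:THmodTMod}, the only genuine work being the identification of the retract $T$-groups that appear on the right-hand sides. Since Corollary~\ref{cor:THmodTMod} holds for \emph{every} choice of base points $m \in M$ and $n \in N$, the strategy is to feed it the heaps of $T$-modules $\cH(\hH(G))$ and $\cH(\hH(H))$ (respectively $\Hh(M)$ and $\Hh(N)$) together with the distinguished base points $0_G,0_H$ (respectively the arbitrary $m,n$), and then to simplify the resulting data using Lemma~\ref{lem:homtoind}.

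For the first equivalence I would start by recalling that a $T$-group $G$ is precisely an abelian group whose associated heap $\hH(G)$ is a $T$-module admitting $0_G$ as an absorber. Applying the functor $\cH$ yields a heap of $T$-modules $\cH(\hH(G))$ whose underlying heap is still $\hH(G)$, so that $0_G \in \cH(\hH(G))$ is an admissible base point. Corollary~\ref{cor:THmodTMod} then gives
\[
f\in T\HMod\big(\cH(\hH(G)),\cH(\hH(H))\big) \iff \tau_{f(0_G)}^{0_H}\circ f \in T\GMod\Big(\big(\gG(\cH(\hH(G));0_G),\act_{0_G}\big),\big(\gG(\cH(\hH(H));0_H),\act_{0_H}\big)\Big).
\]
It remains to check that $\big(\gG(\cH(\hH(G));0_G),\act_{0_G}\big)$ is nothing but the $T$-group $G$. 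On the one hand, the retract $\gG(\hH(G);0_G)$ is the group $G$ on the nose, because $0_G$ is the neutral element of $G$. On the other hand, by Lemma~\ref{lem:homtoind} the action $\act_{0_G}=\Lambda(-,0_G,-)$ coincides with the $0_G$-induced action $\triangleright_{0_G}$ on $\hH(G)$, and since $0_G$ is an absorber, Lemma~\ref{lem:absorber} identifies $\triangleright_{0_G}$ with the original $T$-action of $G$. The same computation applies to $H$, and this yields the first equivalence.

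For the \emph{in particular} statement I would run the identical argument with $\Hh(M)$, $\Hh(N)$ and arbitrary base points $m\in M$, $n\in N$. Here Lemma~\ref{lem:homtoind} gives directly $\big(\Hh(M),\act_m\big)=(M,\triangleright_m)$, so that $\big(\gG(\Hh(M);m),\act_m\big)=\big(\gG(M;m),\triangleright_m\big)$, and likewise for $N$; substituting these into Corollary~\ref{cor:THmodTMod} reproduces the asserted equivalence verbatim.

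The main (and essentially only) obstacle is the bookkeeping in the first equivalence: one must be careful to invoke both that $0_G$ is the group identity, so that the retract at $0_G$ returns $G$ itself rather than merely a group isomorphic to it, and that $0_G$ is an absorber, so that the induced action $\triangleright_{0_G}$ collapses to the genuine $T$-action through Lemma~\ref{lem:absorber}. Everything else is a direct application of results already established, so this corollary is best read as the group-level shadow of Corollary~\ref{cor:THmodTMod}.
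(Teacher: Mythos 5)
Your proposal is correct and follows essentially the same route as the paper: both reduce the claim to Corollary~\ref{cor:THmodTMod} and use Lemma~\ref{lem:homtoind} together with Lemma~\ref{lem:absorber} (and the fact that the retract at the neutral element returns the group itself) to identify the retract $T$-groups appearing on the right-hand side. The only cosmetic difference is that the paper deduces the second equivalence as a special case of the first (via the $T$-groups $(\gG(M;m),\triangleright_m)$ and $(\gG(N;n),\triangleright_n)$), whereas you re-run the argument directly from Corollary~\ref{cor:THmodTMod}; both are fine.
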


\begin{proof}
Recall from Lemma \ref{lem:homtoind} that $\big(\cH\big(\hH(G,+,0_G),\cdot\big),\cdot_{0_G}\big) = \big(\hH(G,+,0_G),\triangleright_{0_G}\big)$ and the latter is the $T$-group $(G,+,0_G,\cdot)$ again by Lemma \ref{lem:absorber} and Theorem \ref{thm:Tgroups}. Hence, the first claim follows from Corollary \ref{cor:THmodTMod}. The second claim is a particular instance of the first one.
\end{proof}

Now, by the following example we can observe that the analogue of Corollary \ref{cor:ringshoms} for $T$-modules does not hold, in general.

\begin{example}\label{ex:nocorr}
Let us consider a non-empty $T$-module $(M,\cdot)$ without an absorber. For instance, $M = \ZZ$ as a module over itself with truss structure given by $m \cdot n = 2mn + m + n$ (see \cite[Corollary 3.53]{Brz:par}). Consider also the induced submodule $(M,\triangleright_e)$, for an arbitrary element $e\in M$. 
Since the identity morphism and the constant map $\widehat{e}\colon M \lra M$, $m \lto e$, are elements of 
\[T\GMod\Big(\big((M,\triangleright_e),\triangleright_e\big),\big((M,\cdot),\triangleright_e\big)\Big) = T\GMod\big((M,\triangleright_e),(M,\triangleright_e)\big),\]
we know from Corollary \ref{cor:ringshoms} that the identity morphism and all the constant maps $\widehat{m}\colon M \lra M$, $n \lto m$, for $m \in M$ are elements of $T\HMod\big(\Hh(M,\triangleright_e),\Hh(M,\cdot)\big)$. However, $T\Mod\big((M,\triangleright_e),(M,\cdot)\big) = \varnothing$ and so no translation of any morphism in $T\HMod\big(\Hh(M,\triangleright_e),\Hh(M,\cdot)\big)$ can be therein.
\end{example}

Even though the $T$-modules $(M,\cdot)$ and $(M,\triangleright_e)$ are not necessarily isomorphic, unexpectedly, every induced submodule of $(M,\cdot)$ is an induced submodule of $(M,\triangleright_e)$ and {\it vice versa} (see the paragraphs preceding \eqref{eq:indchng}). Thus, we have an equality of heaps of $T$-modules $\Hh(M,\triangleright_e)=\Hh(M,\cdot)$. On the other hand, such an equality always yields an isomorphism of modules in case the truss is a ring or the modules are $T$-groups. This points out how unique and different modules over trusses are from $T$-groups and modules over rings.

\begin{proposition}
The category $\star/T\HMod$ is isomorphic to the category $T\GMod$.
\end{proposition}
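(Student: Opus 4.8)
The plan is to realise both categories on the same underlying data — pointed heaps of $T$-modules — and then to let Proposition~\ref{prop:newTHmodTMod} dispose of the morphisms. First I would unwind the under category. Since $\star=\{*\}$ is a heap of $T$-modules with $\Lambda_\star(t,*,*)=*$, a morphism $\star\lra M$ in $T\HMod$ is completely determined by the image $m$ of $*$, and conversely every $m\in M$ arises this way: the compatibility $f(\Lambda_\star(t,*,*))=\Lambda_M(t,m,m)$ reduces to $m=m$ by \eqref{eq:idempotent} (compare Example~\ref{ex:constmaps}). Hence an object of $\star/T\HMod$ is exactly a pair $(M,m)$ with $M$ a heap of $T$-modules and $m\in M$, and a morphism $(M,m)\lra(N,n)$ is an $f\in T\HMod(M,N)$ with $f(m)=n$. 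Note that any such $M$ is automatically inhabited, so that Proposition~\ref{prop:newTHmodTMod} will apply.

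Next I would define the comparison functor $\Phi\colon\star/T\HMod\lra T\GMod$ on objects by $(M,m)\lto(\gG(M;m),\act_m)$, where $\act_m=\Lambda(-,m,-)$, and on morphisms by the identity underlying function. By Lemma~\ref{lem:homtoind}, $(M,\act_m)$ is a $T$-module with absorber $m$, i.e.\ an object of $\MAbs$; under the isomorphism $\MAbs\cong T\GMod$ of Theorem~\ref{thm:Tgroups} this is precisely the $T$-group $(\gG(M;m),\act_m)$, so $\Phi$ is well defined on objects. That $\Phi$ is well defined and bijective on morphisms is exactly the content of Proposition~\ref{prop:newTHmodTMod}: a function $f$ with $f(m)=n$ lies in $T\HMod(M,N)$ if and only if it is a morphism of $T$-groups from $(\gG(M;m),\act_m)$ to $(\gG(N;n),\act_n)$.

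For the inverse I would set $\Psi\colon T\GMod\lra\star/T\HMod$, $G\lto(\Hh(\hH(G)),0_G)$, again acting as the identity on underlying functions. Here $\hH(G)$ is a $T$-module with absorber $0_G$ by the Remark following Theorem~\ref{thm:Tgroups}, so $\Hh(\hH(G))$ is a heap of $T$-modules by Proposition~\ref{prop:homia}, pointed at $0_G$. It then remains to check that $\Phi$ and $\Psi$ are mutually inverse. On objects, $\Phi\Psi(G)=(\gG(\hH(G);0_G),\act_{0_G})$; since $0_G$ is an absorber, Lemma~\ref{lem:homtoind} and Lemma~\ref{lem:absorber} give $\act_{0_G}=\triangleright_{0_G}=\cdot$, while $\gG(\hH(G);0_G)=G$ (equality, since $0_G$ is the neutral element), so $\Phi\Psi(G)=G$. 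Conversely $\Psi\Phi(M,m)=(\Hh(\hH(\gG(M;m))),m)$; using $\hH(\gG(M;m))=M$ together with $\Hh(M,\act_m)=(M,\Lambda)$ from Lemma~\ref{lem:homtoind}, this equals $(M,\Lambda,m)=(M,m)$. On morphisms both composites are the identity because $\Phi$ and $\Psi$ leave the underlying function untouched, and Proposition~\ref{prop:newTHmodTMod} certifies that the two morphism assignments are mutually inverse bijections.

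The main obstacle — really the only delicate point — is to confirm that this is a \emph{strict} isomorphism of categories rather than a mere equivalence, which forces me to track the object assignment as an honest bijection. This rests on the strict equalities $\hH\circ\gG(-;e)=\mathrm{id}$ on heaps, $\gG(\hH(G);0_G)=G$ on groups, and $\Hh(M,\act_m)=(M,\Lambda)$ of Lemma~\ref{lem:homtoind}, none of which introduces a choice once the base point is fixed. Everything else is a direct invocation of the dictionary between heaps of modules, pointed $T$-modules, and $T$-groups already established in \S\ref{ssec:Tgrps} and \S\ref{subs:first}.
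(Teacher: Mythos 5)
Your proof is correct and follows essentially the same route as the paper's: both identify objects of $\star/T\HMod$ with pointed heaps of $T$-modules and pass to the retract $(\gG(M;m),\act_m)$, with the morphism bijection supplied by Proposition~\ref{prop:newTHmodTMod} (the paper invokes its Corollaries~\ref{cor:THmodTMod} and~\ref{cor:ringshoms}, which carry the same content). Your version merely spells out the strict inverse checks that the paper leaves implicit.
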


\begin{proof}
In view of Corollary \ref{cor:THmodTMod}, the assignment sending every $\star \stackrel{f}{\lra} M$ in $\star/T\HMod$ to $\big(\gG(M;f(*)),\cdot_{f(*)}\big)$ gives rise to a well-defined, fully faithful functor. In the opposite direction, by Corollary \ref{cor:ringshoms}, the assignment sending every $(G,+,0,\cdot)$ in $T\GMod$ to $\star \lra \cH\big(\hH(G),\cdot\big): * \lto 0$, is a well-defined, fully faithful functor, which is the inverse of the previous one.
\end{proof}

Given a ring $R$, with every $R$-module $M$ one can associate a heap of $R$-modules by taking $\Lambda(r,m,m') = r\act m' - r\act m + m$ and $[m,m',m''] = m - m' + m''$, where $r\in R$ and $m,m',m''\in M$. Choosing these operations is the same as taking $\Hh(\tT(M))$ as in Example \ref{ex:modules}\eqref{ex:assomodule} and Proposition \ref{prop:homia}. By Proposition \ref{prop:homia} and Lemma \ref{lem:homtoind}, all heaps of $\tT(R)$-modules arise from $\tT(R)$-modules, but not necessarily from $R$-modules. Thus we are led to consider also the full subcategory $R\HMod$ of $\tT(R)\HMod$ in which all objects are coming from $R$-modules, that is, $\tT(R)$-modules with exactly one absorber (see \cite[Lemma~4.6~(2)(ii)]{BrzRyb:mod}).

\begin{proposition}\label{prop:RHMod}
The category $R\HMod$ is the category $\big(\tT(R)\HMod_{\mathbf{is}}^{\mathbf{cn}}\big)^{\mathbf{in}}$ of inhabited isotropic contractible heaps of $\tT(R)$-modules.
\end{proposition}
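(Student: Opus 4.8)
The plan is to prove that the two categories have the same class of objects; since both are full subcategories of $\tT(R)\HMod$, equality of object classes immediately upgrades to equality of categories. I would therefore argue by double inclusion at the level of objects, with the forward inclusion being a direct computation and the reverse inclusion carrying the actual content.

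For $R\HMod \subseteq \big(\tT(R)\HMod_{\mathbf{is}}^{\mathbf{cn}}\big)^{\mathbf{in}}$, an object is $\Hh(\tT(M_0))$ for some $R$-module $M_0$, so by Example~\ref{ex:modules}\eqref{ex:assomodule} and Proposition~\ref{prop:homia} its action is $\Lambda(r,m,n) = [r\cdot n, r\cdot m, m] = r\cdot n - r\cdot m + m$ computed in the underlying abelian group $M_0$. I would then evaluate at the two distinguished scalars: $\Lambda(1_R,m,n) = n - m + m = n$ for all $m,n$, so $1_R \in \mathrm{Stab}(M)$ and $M$ is isotropic; and $\Lambda(0_R,m,n) = 0 - 0 + m = m$ for all $m,n$, so $0_R \in \mathrm{Ann}(M)$ and $M$ is contractible. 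Since $0 \in M_0$, the heap is inhabited. This direction is purely computational.

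The substance is the reverse inclusion. Given an inhabited isotropic contractible heap of $\tT(R)$-modules $M$, I would fix any $e \in M$ and pass to the $\tT(R)$-module $(M,\act_e)$ with absorber $e$ furnished by Lemma~\ref{lem:homtoind}, which already satisfies $\Hh(M,\act_e) = M$. The two hypotheses enter through Theorem~\ref{thm:IUmods}: isotropy with unitality of $\tT(R)$ gives $1_R \in \mathrm{Stab}(M)$ by \ref{item:IUmod2}, and contractibility with the absorber $0_R \in \tT(R)$ gives $0_R \in \mathrm{Ann}(M)$ by \ref{item:IUmod1}. Using the identifications $\mathrm{Stab}(M)=\mathrm{Stab}(M,\act_e)$ and $\mathrm{Ann}(M)=\mathrm{Ann}_e(M,\act_e)$ from Lemmas~\ref{lem:isotropicheap} and \ref{lem:annihilator} (themselves consequences of the base change property \eqref{eq:basechange}), these say exactly that $1_R\act_e m = m$ and $0_R\act_e m = e$ for all $m \in M$. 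I would then endow the retract group $\gG(M;e)$, with neutral element $e$, with the $R$-action $r\cdot m \coloneqq \Lambda(r,e,m)$ and verify the module axioms: associativity $(rs)\cdot m = r\cdot(s\cdot m)$ is \eqref{eq:ass}; distributivity $r\cdot(m+n) = r\cdot m + r\cdot n$ follows from \eqref{eq:rightheap} together with $\Lambda(r,e,e)=e$ from \eqref{eq:idempotent}; the identity axiom $1_R\cdot m = m$ is $1_R \in \mathrm{Stab}(M)$; and the remaining law $(r+s)\cdot m = r\cdot m + s\cdot m$ is where $0_R \in \mathrm{Ann}(M)$ is essential, since writing $r+s = [r,0_R,s]$ in $\tT(R)$ and applying \eqref{eq:leftheap} collapses the middle term $\Lambda(0_R,e,m)=e$ to the group neutral. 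This yields an honest $R$-module $M_0$, and a final use of the base change property \eqref{eq:basechange} shows $\Lambda = \Lambda_{\Hh(\tT(M_0))}$, so that $M = \Hh(\tT(M_0)) \in R\HMod$.

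The main obstacle is conceptual rather than technical: recognising that isotropy and contractibility are precisely the two conditions forcing $1_R$ to act as the identity and $0_R$ to act as the constant zero map, and that these are the only data missing to upgrade the intrinsic $\tT(R)$-module $(M,\act_e)$ of Lemma~\ref{lem:homtoind} to a genuine $R$-module. Once this is identified, every verification reduces to the heap-morphism axioms \eqref{eq:leftheap}--\eqref{eq:idempotent} and the base change property, with no genuinely hard step; the only point requiring care is to exploit $\Hh(M,\act_e)=M$ so that the heap of modules reconstructed from $\gG(M;e)$ returns the original $\Lambda$ rather than merely an isomorphic copy.
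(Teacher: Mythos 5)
Your proposal is correct and follows essentially the same route as the paper: the forward inclusion by direct computation, and the reverse inclusion by fixing $e\in M$, extracting $1_R\in\stab(M)$ and $0_R\in\Ann(M)$ via Theorem~\ref{thm:IUmods}, equipping the retract $\gG(M;e)$ with the action $r\act_e m$, and recovering $\Lambda$ from the base change property. You spell out the module axioms (in particular the role of $0_R\in\Ann(M)$ in additivity of scalars) more explicitly than the paper, and your object-level characterisation makes the paper's preliminary closure-under-isomorphism step automatic, but the argument is the same.
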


\begin{proof}
First of all, let us show that the category $R\HMod$ is closed under isomorphisms, whence it is the essential image of the composite functor
\[
\xymatrix{
R\Mod \ar[r]^-{\tT} & \tT(R)\Mod \ar[r]^-{\cH} & \tT(R)\HMod
}.
\]
Let $(M,[-,-,-],\Lambda)$ be a heap of $\tT(R)$-modules for which there exist an $R$-module $P$ and isomorphism of heaps of $\tT(R)$-modules  $\varphi \colon \cH(\tT(P)) \lra M$. Then $M$ is non-empty, as $e \coloneqq \varphi(0_P) \in M$, and $1_R \in \stab(M)$ and $0_R \in \Ann(M)$, 
so that it is isotropic and contractible. Thus, we may consider the abelian group $\gG(M;e) = (M, +_e, e)$ and the $R$-action $R \times M \lra M, (r,m) \lto r \act_e m$, which makes $M$ an $R$-module. 
In this setting, $\tT\big((M, +_e, e),\act_e\big) = \big((M,[-,-,-]),\act_e\big)$ and $\cH\big((M,[-,-,-]),\act_e\big) = (M,[-,-,-],\Lambda)$ by Lemma \ref{lem:homtoind}, which shows that $M$ is an object in $R\HMod$.

Now, if $M$ is an $R$-module, then $\cH(\tT(M))$ is an inhabited heap of $\tT(R)$-modules with $1_R \in \stab\big(\cH(\tT(M))\big)$ and $0_R \in \Ann\big(\cH(\tT(M))\big)$. Whence any object of $R\HMod$ is an object in $\big(\tT(R)\HMod_{\mathbf{is}}^{\mathbf{cn}}\big)^{\mathbf{in}}$, too. Conversely, for any object $(M,[-,-,-],\Lambda)$ in $\big(\tT(R)\HMod_{\mathbf{is}}^{\mathbf{cn}}\big)^{\mathbf{in}}$ we have that $M$ is non-empty, so there exists $e \in M$, and that $1_R \in \stab(M)$ and $0_R \in \Ann(M)$ by Theorem \ref{thm:IUmods}. Thus, as above we may consider the abelian group $\gG(M;e) = (M, +_e, e)$ and the $R$-action $R \times M \lra M, (r,m) \lto r \act_e m$, which makes of $M$ an $R$-module. 
Again, it turns out that $(M,[-,-,-],\Lambda) = \cH(\tT(\gG(M;e)))$ and hence it is an object in $R\HMod$.
\end{proof}

The following is a particular instance of Corollary \ref{cor:ringshoms} (see also \cite[Lemma 3.1]{BreBrz:Bae}).

\begin{proposition}\label{prop:morphRing}
Let $R$ be a ring and $M,N$ be $R$-modules. Then
\[
f\in R\HMod\big(\Hh(\tT(M)),\Hh(\tT(N))\big) \
\iff \
[f(-),f(0_M),0_N]\in R\Mod(M,N),
\]
where $0_M$ is the neutral element of the additive group $M$ and $0_N$ is the neutral element of $N$. In particular, $f\colon M \lra N$ is a morphism of heaps of $R$-modules if and only if $f = F + f(0_M)$ for some morphism of $R$-modules $F\colon M \lra N$.
\end{proposition}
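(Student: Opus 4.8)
The plan is to deduce the statement directly from Corollary~\ref{cor:ringshoms} by specialising the truss $T$ to the associated truss $\tT(R)$ and taking as $T$-groups the $R$-modules $M$ and $N$ themselves. First I would recall from Example~\ref{ex:easy} that any $R$-module $M$ is a $\tT(R)$-group: its underlying abelian group is $(M,+,0_M)$ and the action of the multiplicative semigroup of $\tT(R)$ is the scalar multiplication. Under this identification, the $\tT(R)$-module $\hH(M)$ is exactly the module $\tT(M)$ of Example~\ref{ex:modules}\eqref{ex:assomodule}, so the heap of $\tT(R)$-modules $\cH(\hH(M))$ appearing in Corollary~\ref{cor:ringshoms} coincides with $\Hh(\tT(M))$ (the symbol $\cH$ there denotes the functor $\Hh$ of Proposition~\ref{prop:homia}). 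The same applies to $N$ with neutral element $0_N$.

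With these identifications in place, I would apply Corollary~\ref{cor:ringshoms} to $G=M$ and $H=N$, obtaining
\[
f\in \tT(R)\HMod\big(\Hh(\tT(M)),\Hh(\tT(N))\big) \iff \tau_{f(0_M)}^{0_N}\circ f\in \tT(R)\GMod(M,N).
\]
Unfolding the translation via \eqref{eq:trans} gives $\big(\tau_{f(0_M)}^{0_N}\circ f\big)(x)=[f(x),f(0_M),0_N]$, so the right-hand condition says precisely that $[f(-),f(0_M),0_N]$ lies in $\tT(R)\GMod(M,N)$.

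It then remains to make two elementary identifications of morphism sets. On the source side, since $R\HMod$ is by definition the full subcategory of $\tT(R)\HMod$ whose objects arise from $R$-modules, and both $\Hh(\tT(M))$ and $\Hh(\tT(N))$ are such objects, we have $R\HMod\big(\Hh(\tT(M)),\Hh(\tT(N))\big)=\tT(R)\HMod\big(\Hh(\tT(M)),\Hh(\tT(N))\big)$. On the target side, a morphism of $\tT(R)$-groups $M\to N$ is an additive map $g$ with $g(t\cdot x)=t\cdot g(x)$ for all $t\in R$, and since the $\tT(R)$-action is the scalar multiplication this is exactly $R$-linearity, so $\tT(R)\GMod(M,N)=R\Mod(M,N)$. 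Combining these yields the claimed equivalence. This step — verifying that the $\tT(R)$-group structure supplied by Example~\ref{ex:easy} has $0_M$ as neutral element and reproduces the $R$-action, and hence that the two pairs of categories match on objects and on morphisms — is the only real content; I expect it to be the main (though mild) obstacle, the rest being a direct invocation of Corollary~\ref{cor:ringshoms}.

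For the final ``in particular'' clause I would simply invert the translation. Writing $F\coloneqq[f(-),f(0_M),0_N]=\tau_{f(0_M)}^{0_N}\circ f$, which is $R$-linear by the equivalence just established, the inverse of $\tau_{f(0_M)}^{0_N}$ is $\tau_{0_N}^{f(0_M)}$, whence $f=\tau_{0_N}^{f(0_M)}\circ F$ and $f(x)=[F(x),0_N,f(0_M)]=F(x)+f(0_M)$ in the additive group of $N$. This exhibits $f$ as $F+f(0_M)$ for the $R$-module morphism $F$, as required.
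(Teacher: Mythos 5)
Your proposal is correct and follows exactly the route the paper intends: the paper states Proposition~\ref{prop:morphRing} as ``a particular instance of Corollary~\ref{cor:ringshoms}'' without further argument, and you have simply spelled out the specialisation $T=\tT(R)$, the identification of $R$-modules with $\tT(R)$-groups, and the matching of the morphism sets $R\HMod=\tT(R)\HMod$ (by fullness) and $\tT(R)\GMod(M,N)=R\Mod(M,N)$. All of these verifications are accurate, including the unwinding of the translation in the ``in particular'' clause.
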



\subsection{Heaps of modules as affine spaces}\label{ssec:aff}
In Section \ref{ssec:RHMod}, we have shown that every homomorphism of heaps of $T$-modules $f:\Hh(M)\lra\Hh(N)$ is a homomorphism of some particular choice of $T$-modules up to a translation $\tau$. Moreover, in the case of rings, or in more general setting of $T$-modules with absorbers, we can find a homomorphism of native $T$-modules $M$ and $N$ which up to a particular choice of translation $\tau$ is equal to $f$. In fact, this suggests a deep connection between heaps of modules and affine spaces. Thus our aim of this section is to reveal this connection. To motivate this discussion we adapt \cite[page 45]{Ostermann-Schmidt} to recall the following notion.

\begin{definition}\label{def:affinemods}
An \emph{affine module over a ring $R$} is a set $M$ together with maps $[-,-,-]\colon M \times M \times M \lra M$ and $\Lambda\colon  R \times M \times M \lra M$ such that
\begin{enumerate}[label=P\arabic*.,ref=P\arabic*,leftmargin=0.8cm]
\item\label{item:P1} $[b,a,c] = [c,a,b]$,
\item\label{item:P2} $[b,a,a] = b$,
\item\label{item:P3} $[[b,a,c],c,e] = [b,a,e]$,
\end{enumerate}
\begin{enumerate}[label=V\arabic*.,ref=V\arabic*,leftmargin=0.8cm]
\item\label{item:V0} $\big[\Lambda(r,a,b),a,c\big] = \Lambda\big(r,c,[b,a,c]\big)$
\item\label{item:V1} $\big[\Lambda(r,a,b),a,\Lambda(r,a,c)\big] = \Lambda\big(r,a,[b,a,c]\big)$,
\item\label{item:V2} $\big[\Lambda(r,a,b),a,\Lambda(s,a,b)\big] = \Lambda(r+s,a,b)$,
\item\label{item:V3} $\Lambda(rs,a,b) = \Lambda(r,a,\Lambda(s,a,b))$,
\item\label{item:V4} $\Lambda(1,a,b) = b$,
\end{enumerate}
for all $a,b,c,e \in M$ and $r,s\in R$.
\end{definition}

\begin{proposition}\label{prop:affinemods}
An affine $R$-module is exactly an isotropic and contractible heap of $\tT(R)$-modules.
\end{proposition}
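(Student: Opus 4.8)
The plan is to verify that the two lists of axioms describe the very same structure on a set $M$ equipped with operations $[-,-,-]$ and $\Lambda$, matching them block by block: the point-axioms \ref{item:P1}--\ref{item:P3} should be equivalent to ``$(M,[-,-,-])$ is an abelian heap'', while the vector-axioms \ref{item:V0}--\ref{item:V4} (together with the heap structure) should be equivalent to ``$\Lambda$ makes $M$ into an isotropic and contractible heap of $\tT(R)$-modules''. The degenerate case $M=\varnothing$ is dispatched at once: the affine axioms hold vacuously, and by Lemma~\ref{lem:isotropicheap} and Lemma~\ref{lem:annihilator} one has $\mathrm{Stab}(\varnothing)=\mathrm{Ann}(\varnothing)=\tT(R)\neq\varnothing$, so the empty heap of modules is isotropic and contractible. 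For the rest I would fix a base point $a\in M$ and set $t\act_a m\coloneqq\Lambda(t,a,m)$.

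For the heap part, \ref{item:P1} is precisely commutativity and \ref{item:P2} is one Mal'cev identity (the other following from \ref{item:P1}); the only genuine work is to extract full associativity \eqref{eq:assoc} from \ref{item:P3}, and this is where I expect the main obstacle to lie. The plan is first to massage \ref{item:P3} using \ref{item:P1} (twice) into the cancellation identity $[p,q,[q,s,r]]=[p,s,r]$, and then to chain it with \ref{item:P3} itself through the intermediate term $[[a,b,c],c,[c,d,e]]$ to obtain $[[a,b,c],d,e]=[a,b,[c,d,e]]$. In the reverse direction \ref{item:P3} is immediate from associativity and \eqref{eq:malcev}.

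Next I would show that $(M,\act_a)$ is a $\tT(R)$-module: associativity of the action is \ref{item:V3}; heap-linearity in the truss variable amounts to $\Lambda(-,a,m)$ being a homomorphism of abelian groups $(R,+)\to\gG(M;a)$, which \ref{item:V2} delivers (and which forces $\Lambda(0_R,a,m)=a$ by cancellation); and heap-linearity in the module variable, namely that $\Lambda(t,a,-)$ is a heap endomorphism, follows from \ref{item:V1} via Proposition~\ref{prop:AhAb} with $e=e'=a$, which simultaneously yields the idempotency $\Lambda(t,a,a)=a$. The remaining key identity is the base change property \eqref{eq:basechange}: I would obtain it by specialising \ref{item:V0} (substituting so that the inner bracket collapses by \eqref{eq:malcev}), expanding through the heap-linearity just established, and simplifying with \ref{item:P3}. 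With \eqref{eq:basechange} available, Proposition~\ref{prop:homia} gives $(M,\Lambda)=\Hh(M,\act_a)$, a heap of $\tT(R)$-modules. Isotropy and contractibility then fall out of \eqref{eq:basechange}: $\Lambda(1_R,m,n)=[\,n,m,m\,]=n$ by \ref{item:V4}, so $1_R\in\mathrm{Stab}(M)$, while $\Lambda(0_R,m,n)=[\,a,a,m\,]=m$ since $\Lambda(0_R,a,x)=a$, so $0_R\in\mathrm{Ann}(M)$.

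For the converse, assume $M$ is an isotropic contractible heap of $\tT(R)$-modules. Then \ref{item:P1},\ref{item:P2} are the abelian heap axioms and \ref{item:P3} follows as noted; \ref{item:V3} is \eqref{eq:ass}; \ref{item:V1} is \eqref{eq:rightheap} combined with the idempotency \eqref{eq:idempotent} of Lemma~\ref{lem:idem-inter}; and \ref{item:V0} is recovered from \eqref{eq:basechange}, \eqref{eq:rightheap} and associativity exactly as in the direct verification above. The two axioms where the ring structure of $R$ genuinely intervenes are \ref{item:V4} and \ref{item:V2}, and these are precisely where isotropy and contractibility are used: by Theorem~\ref{thm:IUmods} one has $1_R\in\mathrm{Stab}(M)$ and $0_R\in\mathrm{Ann}(M)$, so $\Lambda(1_R,a,b)=b$ gives \ref{item:V4}, while writing $r+s=[r,0_R,s]$ in $\hH(R)$ and applying \eqref{eq:leftheap} together with $\Lambda(0_R,a,b)=a$ yields $\Lambda(r+s,a,b)=[\Lambda(r,a,b),a,\Lambda(s,a,b)]$, which is \ref{item:V2}.
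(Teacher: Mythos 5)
Your proposal is correct and follows the same overall decomposition as the paper's proof: the empty case is handled identically, the forward direction (isotropic contractible heap of $\tT(R)$-modules $\Rightarrow$ affine $R$-module) uses exactly the paper's ingredients (Theorem~\ref{thm:IUmods} for \ref{item:V4} and $\Lambda(0,a,b)=a$, the explicit computation with \eqref{eq:rightheap}, \eqref{eq:idempotent} and \eqref{eq:basechange} for \ref{item:V0}, and the observation that \ref{item:V1}--\ref{item:V3} are immediate). The one place where you genuinely diverge is the converse: the paper dismisses it as ``a simple check'' and delegates the substance to \cite[Satz 1]{Ostermann-Schmidt}, which asserts that $[-,a,-]$ and $\Lambda(-,a,-)$ define an $R$-module structure for each choice of origin $a$, whereas you carry out that verification from scratch --- extracting full heap associativity from \ref{item:P3} via the cancellation identity $[p,q,[q,s,r]]=[p,s,r]$, reading the module axioms off \ref{item:V1}--\ref{item:V3} with the help of Proposition~\ref{prop:AhAb}, and deriving the base change property from \ref{item:V0} before invoking Proposition~\ref{prop:homia}. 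I checked these steps and they go through; what your route buys is a self-contained argument that does not lean on the external reference, at the cost of a longer verification that the paper deliberately compresses.
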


\begin{proof}
Clearly, any abelian heap satisfies \ref{item:P1}--\ref{item:P3}. By Theorem \ref{thm:IUmods} \ref{item:IUmod2} and \ref{item:IUmod1} any isotropic contractible heap of $\tT(R)$-modules satisfies \ref{item:V4} and $\Lambda(0,a,b) = a$ for all $a,b\in M$.  
Thus, the only non-trivial check of \ref{item:V0}-\ref{item:V3} consists in observing that
\[
\begin{aligned}
\Lambda\big(r,c,[b,a,c]\big) & \stackrel{\eqref{eq:rightheap}}{=} \big[\Lambda(r,c,b),\Lambda(r,c,a),\Lambda(r,c,c)\big] \\
& \stackrel{\eqref{eq:idempotent}}{=} \big[\Lambda(r,c,b),\Lambda(r,c,a),a,a,c\big] \\
& \stackrel{\eqref{eq:basechange}}{=}\big[\Lambda(r,a,b),a,c\big].
\end{aligned}
\]
Thus an isotropic and contractible heap of $\tT(R)$-modules is an affine $R$-module.

Conversely, assume that $(M,[-,-,-],\Lambda)$ satisfies \ref{item:P1}--\ref{item:P3} and \ref{item:V0}--\ref{item:V4}. If $M$ is empty, then trivially $M$ is also a heap of $\tT(R)$-modules and $\stab(M)=R=\Ann(M)$. If $M$ is non-empty, then the fact that $(M,[-,-,-],\Lambda)$ is a heap of $\tT(R)$-modules such that $\Lambda(1,a,b) = b = \Lambda(0,b,a)$ for all $a,b \in M$ is a simple check. In fact, one may take advantage of the fact that for any chosen $a \in M$, all $b,c \in M$ and $r \in R$
\[b +_a c \coloneqq [b,a,c] \qquad \text{and} \qquad r\act_a b \coloneqq \Lambda(r,a,b)\]
 define an $R$-module structure on $M$ (see \cite[Satz 1]{Ostermann-Schmidt}). 
\end{proof}

As a consequence, it is natural to define morphisms of affine $R$-modules as morphisms of the corresponding heap of $\tT(R)$-module structure and Proposition \ref{prop:morphRing} confirms that they coincide with the intuitive idea of maps which are linear up to a constant.

\begin{corollary}\label{cor:summRmodules}
If $R$ is a ring, then the category of affine $R$-modules is (isomorphic to) the full subcategory $\tT(R)\HMod_{\mathbf{is}}^{\mathbf{cn}}$ of \label{p-contr-is-heaps} $\tT(R)\HMod$ consisting of isotropic contractible heaps of $\tT(R)$-modules. Moreover, the category of non-empty affine $R$-modules is (isomorphic to) the category $R\HMod$. 
\end{corollary}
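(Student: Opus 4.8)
The plan is to assemble the statement entirely from results already at hand: Proposition~\ref{prop:affinemods} takes care of the objects, the stipulated definition of morphisms of affine $R$-modules takes care of the hom-sets, and Proposition~\ref{prop:RHMod} then yields the second assertion directly.

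First I would establish the isomorphism of categories between affine $R$-modules and $\tT(R)\HMod_{\mathbf{is}}^{\mathbf{cn}}$. On objects, Proposition~\ref{prop:affinemods} tells us that a triple $(M,[-,-,-],\Lambda)$ satisfies the axioms \ref{item:P1}--\ref{item:P3} and \ref{item:V0}--\ref{item:V4} of an affine $R$-module if and only if it is an isotropic and contractible heap of $\tT(R)$-modules; this includes the empty case, where $\stab(M) = R = \Ann(M)$, so no object is lost on either side. On morphisms, we have \emph{defined} a morphism of affine $R$-modules to be precisely a morphism of the underlying heaps of $\tT(R)$-modules, so the two hom-sets agree by construction. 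Consequently the assignment that is the identity on objects and on morphisms is a well-defined isomorphism of categories, which proves the first claim.

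For the second claim I would observe that a heap of $\tT(R)$-modules is non-empty exactly when it is inhabited. Under the identification just established, the category of non-empty affine $R$-modules therefore corresponds to the full subcategory $\big(\tT(R)\HMod_{\mathbf{is}}^{\mathbf{cn}}\big)^{\mathbf{in}}$ of inhabited isotropic contractible heaps of $\tT(R)$-modules. By Proposition~\ref{prop:RHMod} this subcategory coincides with $R\HMod$, which completes the argument.

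Since every nontrivial ingredient has already been proved in the cited results, there is no genuine obstacle here; the only point that warrants a moment's care is verifying that passing between the two descriptions neither drops nor creates morphisms, and this is immediate because morphisms of affine $R$-modules were set up to be exactly morphisms of heaps of modules.
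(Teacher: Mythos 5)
Your argument is correct and follows the paper's own route: the paper derives this corollary exactly from Proposition~\ref{prop:affinemods} (objects, with morphisms agreeing by definition) together with Proposition~\ref{prop:RHMod} (the inhabited case). Your write-up merely spells out the same two-step reduction in more detail.
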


\begin{proof}
It follows from Proposition \ref{prop:RHMod} and Proposition \ref{prop:affinemods}.
\end{proof}

\begin{remark}
By Proposition \ref{prop:morphRing}, all morphisms of inhabited isotropic contractible heaps of $\tT(R)$-modules (that is, of affine $R$-modules) are translations of $R$-module homomorphisms. In both cases of affine $R$-modules and heaps of modules the empty set is the initial object.
\end{remark}

Inspired by the fact that, by Proposition \ref{prop:newTHmodTMod}, morphisms of heaps of $T$-modules are essentially morphisms of $T$-groups up to fixing an "origin" and since considering affine $R$-modules amounts to considering (isotropic and contractible) heaps of $\tT(R)$-modules, let us consider the following extension of the well-known definition of an affine space as a free transitive action of the additive group of a vector space on a set (see \cite[Appendix \S2]{MacLane-Birk}).

\begin{definition}\label{def:Taffinespace}
Let $T$ be a truss. A {\it $T$-affine space} is a triple $(A,G,\varrho_A)$,
where $A$ is a set, $G$ is a $T$-group and $\varrho_A:G\xhookrightarrow{\varrho_A} \Sym A$ is a group monomorphism into the symmetric group of $A$, such that the shear map
\[G\times A\xrightarrow{(\varrho_A,\pi_2)} A\times A,\qquad (g,a)\lto (\varrho_A(g)(a),a) \]
is bijective.
\end{definition}

\begin{example}
Let $A$ be a non-empty set. Then $(A,G,\varrho_A)$ is a $T$-affine space
if and only if $G$ acts freely and transitively on $A$ by $\varrho_A$.
For the empty set there exists a unique $T$-affine space $(\varnothing,\{*\},\{*\}\hookrightarrow\{\id\})$.
\end{example}

\begin{remark}
Affine spaces over a truss $T$ are pseudo-torsors over $\{*\}$ in the sense of \cite[16.5.15]{Grothendieck}.
\end{remark}

\begin{example}
Recall that an affine space over a field $\FF$ is a triple $(A,V,\circ)$, where $A$ is a set, $V$ is a vector space over $\FF $ and $\circ\colon V\times A\lra A$ is a free and transitive action of the additive group of $V$ on $A$. If $G$ is a vector space over $\FF$ and $A$ is a non-empty set, then $(A,G,\circ)$ is an affine space if and only if $(A,G,\varrho_A)$ is $\tT(\FF)$-affine space with $\varrho_A(g)(a) = g\circ a$.
\end{example}

\begin{definition}\label{def:homos}
A {\it homomorphism of $T$-affine spaces} $(A,G,\varrho_A)$ and $(B,H,\varrho_B)$ is a pair $(F,f)\in\Set(A,B)\times T\GMod(G,H)$ such that $f$ is equivariant, that is, the diagram
\[
\xymatrix @R=20pt{
A\ar[rr]^{\varrho_A(g)}\ar[d]_-F && A \ar[d]^-F\\
A\ar[rr]^{\varrho_B(f(g))} &&B
}\]
commutes for all $g\in G$. The $T$-affine spaces and their homomorphisms form a category, which we denote by $\Aff_T$.\label{p-aff-t}
\end{definition}

\begin{remark}
The empty $T$-affine space in $\Aff_T$ is the initial object. This follows by the fact that $\{*\}$ and $\varnothing$ are initial objects in the respective categories.
\end{remark}

\begin{remark}\label{rem:affk}
Let us denote by $\Aff_{\FF}$ the category of affine spaces over a field $\FF$. \label{p-aff-F-space}
Recall that a homomorphism between two affine spaces $(A,V,\circ)$ and $(B,W,\bullet)$ is a map $F\colon A\lra B$ such that for all $a\in A$ and $v\in V$, $F(v\circ a)=f(v)\bullet F(a)$, where $f\colon V\lra W$ is a homomorphism of vector spaces. 
\note{As $V$ acts freely and transitively on $A$, for all $a,b\in A$ there exists unique $v\in V$ such that $v\circ a=b$; we denote it by $b - a$. Thus, $f(v) = f(b-a) = F(b)-F(a)$ for any pair $a,b\in A$ such that $v\circ a=b$.} 
Since every affine space over $\FF$ is a $\tT(\FF)$-affine space and $F(v\circ a)=f(v)\bullet F(a)$ is the equivariant condition in the sense of Definition \ref{def:homos}, we conclude that $(F,f)$ is a homomorphism of $\tT(\FF)$-affine spaces.
Hence $\Aff_\FF$ is a full subcategory of $\Aff_{\tT(\FF)}$.
\end{remark}

\begin{example}
 The truss $\star = \{*\}$ acts on any group $G$ by $*\cdot g=g$ as in Example \ref{ex:easy}\ref{item:easy2} and any group homomorphism is equivariant with respect to the unital action of $\star$. Thus the category of abelian groups is isomorphic to the category $\star\GMod_{\mathbf{is}}$.
\end{example}

The main aim of this section is to relate $T$-affine spaces to heaps of $T$-modules. We start by extending the translation group functor \eqref{trans.funct} to a functor between heaps of $T$-modules and $T$-groups.

\begin{lemma}\label{lem:Trans}
Let $(M,\Lambda)$ be a heap of $T$-modules. Then $\mathrm{Trans}(M)$ is a $T$-group with $T$-action
\begin{equation}\label{def.trans.act}
t\cdot \tau_a^b=\tau_a^{\Lambda(t,a,b)} \colon  m\lto [m,a,\Lambda(t,a,b)].
\end{equation}
Moreover, if $f\colon (M,\Lambda_M)\lra (N,\Lambda_N)$ is a morphism of heaps of $T$-modules, then the group homomorphism $\mathrm{Trans}(f)$ is $T$-linear. In particular, the functor $\mathrm{Trans}\colon \heap\lra \grp$ induces a functor $T\HMod\lra T\GMod$ which we denote by $\mathrm{Trans}$ again.
\end{lemma}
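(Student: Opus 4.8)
The plan is to reduce everything to the retract $\gG(M;e)$ for a fixed but arbitrary $e\in M$ and to transport structure along a canonical group isomorphism, so that the $T$-group axioms come essentially for free and the only genuine computation is an identity guaranteed by the base change property.

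First I would fix $e\in M$ and recall that, since $M$ is an abelian heap, $\mathrm{Trans}(M)$ is an abelian group (see \eqref{comp.trans} and the surrounding discussion). The map $\phi\colon \gG(M;e)\lra\mathrm{Trans}(M)$, $b\lto\tau_e^b$, is then a group isomorphism: surjectivity and injectivity follow from the identity $\tau_a^b=\tau_e^{[e,a,b]}$ (a consequence of \eqref{eq:middleass}), and the homomorphism property follows from \eqref{comp.trans} together with abelianity. By Lemma~\ref{lem:homtoind}, $(M,\act_e)$ is a $T$-module with absorber $e$, hence an object of $\MAbs$; via the isomorphism $\MAbs\cong T\GMod$ of Theorem~\ref{thm:Tgroups} its underlying group $\gG(M;e)$ is a $T$-group with action $t\cdot b=\Lambda(t,e,b)$. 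Transporting this action along $\phi$ endows $\mathrm{Trans}(M)$ with a $T$-group structure for which $t\cdot\tau_e^b=\tau_e^{\Lambda(t,e,b)}$, and all the $T$-group axioms \eqref{eq:TGrp} hold automatically because $\phi$ is a group isomorphism.

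The crux is then to show that this transported action agrees with the stated formula \eqref{def.trans.act} at an arbitrary base point, which simultaneously proves that \eqref{def.trans.act} is well defined (independent of the chosen representative $(a,b)$ of the translation). Writing $\tau_a^b=\tau_e^{[e,a,b]}$, the transported action gives $t\cdot\tau_a^b=\tau_e^{\Lambda(t,e,[e,a,b])}$, whereas $\tau_a^{\Lambda(t,a,b)}=\tau_e^{[e,a,\Lambda(t,a,b)]}$; so I must verify
\[
\Lambda(t,e,[e,a,b])=[e,a,\Lambda(t,a,b)].
\]
I would check this inside $\gG(M;e)$, writing $+$ for $[-,e,-]$: the base change property \eqref{eq:basechange} gives $\Lambda(t,a,b)=t\cdot_eb-t\cdot_ea+a$, while additivity of the $T$-group action (together with $t\cdot_e(-a)=-(t\cdot_ea)$, which is deduced from additivity and $t\cdot_e e=e$) gives $\Lambda(t,e,[e,a,b])=t\cdot_e(b-a)=t\cdot_eb-t\cdot_ea$; both sides collapse to $t\cdot_eb-t\cdot_ea$ by abelianity. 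This is the step I expect to be the only real work, and the base change property \eqref{eq:basechange} is exactly what makes it succeed for base points other than $e$.

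Finally, for functoriality I would note that $T$-linearity of $\mathrm{Trans}(f)$ is immediate from the definition \eqref{trans.funct}: one has $\mathrm{Trans}(f)(t\cdot\tau_a^b)=\tau_{f(a)}^{f(\Lambda_M(t,a,b))}$ and $t\cdot\mathrm{Trans}(f)(\tau_a^b)=\tau_{f(a)}^{\Lambda_N(t,f(a),f(b))}$, and these coincide precisely by the morphism axiom \eqref{eq:morph}, since two translations with the same base point agree if and only if their top entries do. As $\mathrm{Trans}$ is already a functor $\Ah\lra\Ab$ preserving identities and composition, and it now sends heaps of $T$-modules to $T$-groups and their morphisms to $T$-linear maps, it restricts to the asserted functor $T\HMod\lra T\GMod$.
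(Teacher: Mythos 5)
Your proof is correct, but it takes a genuinely different route from the paper's. The paper verifies the $T$-group axioms for $\mathrm{Trans}(M)$ directly: associativity of the action from \eqref{eq:ass}, the first identity in \eqref{eq:TGrp} from \eqref{eq:leftheap} together with \eqref{comp.trans}, and the second from the explicit computation $\Lambda\big(t,a,[b,a',b']\big) = \big[\Lambda(t,a,b),a',\Lambda(t,a',b')\big]$, which is its only nontrivial step. You instead transport the $T$-group structure of $\big(\gG(M;e),\act_e\big)$ (available from Lemma~\ref{lem:homtoind} and Theorem~\ref{thm:Tgroups}) along the isomorphism $b \lto \tau_e^b$, so the axioms \eqref{eq:TGrp} are automatic, and you correctly isolate the genuine content as the base-point independence $\Lambda(t,e,[e,a,b]) = [e,a,\Lambda(t,a,b)]$, which your computation in $\gG(M;e)$ via \eqref{eq:basechange} establishes; this also makes explicit the well-definedness of \eqref{def.trans.act}, which the paper leaves implicit. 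What your approach buys is economy (no axiom re-verification) and a conceptual explanation of why $\mathrm{Trans}(M) \cong \gG(M;e)$ as $T$-groups; what the paper's buys is self-containedness and validity without choosing a base point --- note that your argument fixes $e \in M$ and so the degenerate case $M = \varnothing$, where $\mathrm{Trans}(\varnothing)$ is the trivial group with the trivial action, should be disposed of separately. The functoriality argument via \eqref{eq:morph} is the same in both proofs.
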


\begin{proof}
Let $(M,\Lambda_M)$ be a heap of $T$-modules. We already know from Section~\ref{ssec:heaps} that $\mathrm{Trans}(M)$ is an abelian group. The associativity of the action \eqref{def.trans.act} follows by the associative law  \eqref{eq:ass} for $\Lambda$. 
The distributivity of the heap operation of $T$ over the action \eqref{def.trans.act}, that is, the first of the properties \eqref{eq:TGrp}, follows by the fact that $\Lambda$ is a heap morphism  in the first argument \eqref{eq:leftheap} and by equation \eqref{comp.trans} combined with the commutativity of the group operation $\circ$ on $\mathrm{Trans}(M)$. 
Finally, 
\[
\begin{aligned}
\Lambda\big(t,a,[b,a',b']\big)
 & \stackrel{\eqref{eq:rightheap}}{=} \Big[\Lambda\big(t,a,b\big),\Lambda\big(t,a,a'\big),\Lambda\big(t,a,b'\big)\Big] \\
 & \stackrel{\phantom{\eqref{eq:basechange}}}{=} \Big[\Lambda\big(t,a,b\big),a',a',\Lambda\big(t,a,a'\big),\Lambda\big(t,a,b'\big)\Big] \\
 & \stackrel{\phantom{\eqref{eq:basechange}}}{=} \Big[\Lambda\big(t,a,b\big),a',\Lambda\big(t,a,b'\big),\Lambda\big(t,a,a'\big),a'\Big] \\
  &\stackrel{\eqref{eq:basechange}}{=} \Big[\Lambda(t,a,b),a',\Lambda(t,a',b')\Big],
\end{aligned}
\]
for all $t\in T$ and $a,b,a',b'\in M$. Combining this equality with \eqref{comp.trans} one obtains the distributivity of the $T$-action over the group operation, i.e.\ the second of properties \eqref{eq:TGrp}.

Concerning the morphisms, it is clear that if $f \colon  M \lra N$ is a morphism of heaps of $T$-modules, then
\[
\begin{aligned}
\mathrm{Trans}(f)\left(t\act \tau_a^b\right) & = \mathrm{Trans}(f)\big(\tau_a^{\Lambda(t,a,b)}\big) = \tau_{f(a)}^{f\big(\Lambda(t,a,b)\big)} 
 = \tau_{f(a)}^{\Lambda\big(t,f(a),f(b)\big)} = t \act \mathrm{Trans}(f)\left(\tau_a^b\right),
\end{aligned}
\]
and hence $\mathrm{Trans}(f)$ is morphism of $T$-groups.
\end{proof}

The following theorem gives a geometric interpretation of heaps of $T$-modules.

\begin{theorem}\label{thm:affT}
The categories $\Aff_{T}$ and $T\HMod$ are equivalent.
\end{theorem}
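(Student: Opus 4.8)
The plan is to exhibit an explicit pair of quasi-inverse functors $\Phi\colon T\HMod \lra \Aff_T$ and $\Psi\colon \Aff_T \lra T\HMod$. For $\Phi$ I would send a heap of $T$-modules $(M,\Lambda)$ to the triple $\big(M,\mathrm{Trans}(M),\iota_M\big)$, where $\mathrm{Trans}(M)$ carries the $T$-group structure of Lemma~\ref{lem:Trans} and $\iota_M\colon \mathrm{Trans}(M)\hookrightarrow \Sym{M}$ is the inclusion of the translation group into the symmetric group of the underlying set. This is a $T$-affine space: $\iota_M$ is a group monomorphism by construction, and the shear map is bijective because $\tau_a^b$ is the unique translation with $\tau_a^b(a)=[a,a,b]=b$, so $(b,a)$ has the unique preimage $(\tau_a^b,a)$. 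On a morphism $f$ of heaps of $T$-modules I set $\Phi(f)=(f,\mathrm{Trans}(f))$; the group map $\mathrm{Trans}(f)$ is $T$-linear by Lemma~\ref{lem:Trans}, and the required equivariance reduces to $f([x,a,b])=[f(x),f(a),f(b)]$, which holds since $f$ is a heap homomorphism.

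For $\Psi$ I would use the free and transitive action. Given $(A,G,\varrho)$ with $A$ nonempty, bijectivity of the shear map produces, for every $a,b\in A$, a unique $v_{a,b}\in G$ with $\varrho(v_{a,b})(b)=a$; I then define a heap operation and a $\Lambda$ on $A$ by $[a,b,c]:=\varrho(v_{a,b})(c)$ and $\Lambda(t,a,b):=\varrho(t\cdot v_{b,a})(a)$, and on morphisms I put $\Psi(F,f)=F$. Rather than verifying axioms \ref{item:HM1}--\ref{item:HM3} by hand, I would reduce to a known object: fixing any $o\in A$, the map $\theta_o\colon G\lra A,\ g\lto \varrho(g)(o)$ is a bijection, and a short computation (using that $0_G$ is an absorber, so $t\cdot(-g)=-t\cdot g$) shows that $\theta_o$ transports the canonical heap of $T$-modules $\Hh(\hH(G))$ of Proposition~\ref{prop:homia} precisely onto $\big(A,[-,-,-],\Lambda\big)$. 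Since transport of structure along a bijection preserves the defining properties, $\Psi(A,G,\varrho)$ is a genuine heap of $T$-modules; functoriality of $\Psi$ follows from equivariance and $T$-linearity of $f$, which together force $F$ to be a morphism of heaps of $T$-modules. The empty $T$-affine space is sent to the empty heap of $T$-modules, matching $\Phi(\varnothing)$.

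I would then check that the two functors are mutually quasi-inverse. On objects, $\Psi\Phi=\Id_{T\HMod}$ holds on the nose: unwinding the definitions, the reconstructed bracket is $[a,b,c]=\tau_b^a(c)=[c,b,a]=[a,b,c]$ and the reconstructed action is $\tau_a^{\Lambda(t,a,b)}(a)=\Lambda(t,a,b)$, while $\Psi\Phi(f)=f$. For $\Phi\Psi$ I would produce the natural isomorphism $\eta_{(A,G,\varrho)}=(\id_A,\phi)$, where $\phi\colon \mathrm{Trans}(\Psi(A,G,\varrho))\lra G$ sends $\tau_a^b$ to $v_{b,a}$. The crucial point is that, because $G$ is abelian, every $\varrho(g)$ is itself a translation of the reconstructed heap (one checks $\varrho(g)(x)=[x,a,\varrho(g)(a)]$ using commutativity of $G$), so $g\lto\varrho(g)$ identifies $G$ with $\mathrm{Trans}(A)$; this makes $\phi$ an isomorphism of $T$-groups, the pair $(\id_A,\phi)$ an isomorphism of $T$-affine spaces, and naturality is routine. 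Combined with $\Psi\Phi=\Id$ this yields the asserted equivalence.

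The main obstacle I anticipate is the well-definedness of $\Psi$ on objects: showing that the base-point-free $\Lambda$ satisfies the base change property \eqref{eq:basechange} and is independent of any auxiliary choice. The cleanest way around it is exactly the reduction to $\Hh(\hH(G))$ described above, which converts the axiom check into transport of structure along $\theta_o$; the only genuinely load-bearing algebraic input there, and again in the identification $\mathrm{Trans}(A)=\varrho(G)$, is the commutativity of the abelian group $G$ together with the absorber identity $t\cdot 0_G=0_G$ satisfied by every $T$-group.
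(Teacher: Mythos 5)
Your proposal is correct and, at the level of architecture, coincides with the paper's proof: the same pair of functors (with the names $\Phi$ and $\Psi$ swapped relative to the paper), the same base-point-free formulas $[a,b,c]=\varrho(v_{a,b})(c)$ and $\Lambda(t,a,b)=\varrho(t\cdot v_{b,a})(a)$, the assignment $(F,f)\lto F$ and $F\lto (F,\mathrm{Trans}(F))$ on morphisms, the identity on one composite, and the counit $(\id_A,\epsilon_G)$ with $\epsilon_G(\tau_a^b)=v_{b,a}$ on the other. The one genuine difference is how you certify that $(A,[-,-,-],\Lambda)$ is a heap of $T$-modules: the paper verifies \eqref{eq:leftheap}, \eqref{eq:rightheap}, \eqref{eq:ass} and \eqref{eq:basechange} by direct computation with $\varrho$, whereas you transport the structure $\Hh(\hH(G))$ of Proposition~\ref{prop:homia} along the bijection $\theta_o\colon G\lra A$, $g\lto\varrho(g)(o)$, and only check that the transported operations agree with the base-point-free ones. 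That check does go through: writing $a=\varrho(g_a)(o)$ and so on, freeness gives $v_{a,b}=g_a-g_b$, and $t\cdot v_{b,a}=t\cdot g_b-t\cdot g_a$ follows from $t\cdot 0_G=0_G$ and hence $t\cdot(-g)=-t\cdot g$; since the axioms are universally quantified identities they survive transport, so your route is sound and replaces the most laborious page of the paper's argument by a structural observation, with independence of the choice of $o$ carried by the fact that the displayed formulas never mention $o$. Your identification $\mathrm{Trans}(A)=\varrho(G)$ via commutativity of $G$ is exactly the paper's (implicit) argument that $\epsilon_G$ is a well-defined bijection, so the remaining verifications match the paper's.
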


\begin{proof}
Let us begin by defining a functor from $\Aff_T$ to $T\HMod$. The empty $T$-affine space is sent to the unique empty heap of $T$-modules. Let $(A,G,\varrho_A)$ be a non-empty $T$-affine space and let us define the ternary operations $[-,-,-]$ and $\Lambda$ as follows. For all $a,b,c \in A$, there exists a unique $u \in (G,+)$ such that $\varrho_A(u)(b) = a$, and then we set
\begin{equation}\label{heap}
   [a,b,c] \coloneqq \varrho_{A}(u)(c). 
\end{equation}
Let us prove that this makes of $A$ a (non-empty) abelian heap. First of all, if $v \in G$ is the unique element such that $\varrho_A(v)(b) = c$, then
\[
\begin{aligned}
\varrho_A(u)(c) &= \varrho_A(u)(\varrho_A(v)(b))=(\varrho_A(u)\circ\varrho_A(v))(b) =\varrho_A(u+v)(b)\\ &=\varrho_A(v+u)(b)= \varrho_A(v)(\varrho_A(u)(b))= \varrho_A(v)(a),
\end{aligned}
\]
whence
\[[a,b,c] = \varrho_A(u)(c) = \varrho_A(v)(a) = [c,b,a].\]
Furthermore, it is clear that $[b,b,a] = \varrho_A(0)(a) = a$. Finally, let $a,b,c,d,e \in A$ and let $u,v \in (G,+)$ be such that $ \varrho_A(u)(b)=a$ and $\varrho_A(v)(d) = c$. In particular, \mbox{$\varrho_A(u)(c)=\varrho_A(u+v)(d)$.} Then we have
\[
[a,b,[c,d,e]] =\varrho_A(u)(\varrho_A(v)(e))=\varrho_A(u+v)(e)=[\varrho_A(u)(c),d,e]=[[a,b,c],d,e].
\]

Given $a,b \in A$, let $u \in G$ be the unique element such that $b = \varrho_A(u)(a)$. Then we can define
\[\Lambda(t,a,b) \coloneqq \varrho_A(t\act u)(a).\]
Let us check that $\Lambda$ makes  $(A,[-,-,-])$ a heap of $T$-modules. First,
$$
\begin{aligned}
\big[\Lambda(t,a,b),\Lambda(t',a,b),\Lambda(t'',a,b)\big] &\stackrel{\phantom{\eqref{eq:TGrp}}}{=} \big[\varrho_A(t\act u)(a),\varrho_A(t'\act u)(a),\varrho_A(t''\act u)(a)\big]\\ 
&\stackrel{\phantom{\eqref{eq:TGrp}}}{=} \varrho_A\big((t\act u)-(t'\act u)\big)\big(\varrho_A(t''\act u)(a)\big)\\
&\stackrel{\phantom{\eqref{eq:TGrp}}}{=} \varrho_A(t\act u - t'\act u + t''\act u)(a) \\
 &\stackrel{\eqref{eq:TGrp}}{=} \varrho_{A}\big([t,t',t'']\act u\big)(a) = \Lambda([t,t',t''],a,b).
\end{aligned}
$$
Second, for $a,b,c,d \in A$ let $u,v,w \in G$ be such that $b = \varrho_{A}(u)(a) $, $c = \varrho_A(v)(a)$, $d= \varrho_A(w)(a)$, then 
 $$
 [b,c,d] = \varrho_A(u-v)(d) = \varrho_A(u-v+w)(a),$$ 
 and so
\begin{multline*}
\big[\Lambda(t,a,b),\Lambda(t,a,c),\Lambda(t,a,d)\big] = \big[\varrho_A(t\act u)(a),\varrho_A(t\act v)(a),\varrho_A(t\act w)(a)\big] \\
 = \varrho_A\big(t\act u-t\act v\big)\big(\varrho_A(t\act w)(a)\big) \stackrel{\eqref{eq:TGrp}}{=} \varrho_A\big(t\act (u - v + w)\big)(a) = \Lambda(t,a,[b,c,d]).
\end{multline*}
Summing up, $\Lambda$ is a heap homomorphism in the first and third entry. Furthermore, with the same assumptions as above,
\[\Lambda(t,a,\Lambda(t',a,b)) = \Lambda\big(t,a,\varrho_A (t'\act u)(a)\big) = \varrho_A(t\act (t'\act u))(a)  =\varrho_A((tt')\act u) (a) = \Lambda(tt',a,b).\]
Finally, 
\begin{multline*}
\big[\Lambda(t,a,b),\Lambda(t,a,c),c\big] = \big[\varrho_A(t\act u)(a),\varrho_A(t\act v)(a) ,\varrho_A(v)(a)\big] = \varrho_A\big(t\act u-t\act v +v\big)(a) \\
 = \varrho_A\big(t\act (u-v)\big)\big(\varrho_A(v)(a)\big) =\varrho_A\big(t\act (u-v)\big)(c)= \Lambda(t,c,b).
\end{multline*}

Therefore, the assignment $(A,G,\varrho_A) \lto (A,[-,-,-],\Lambda)$ from $\Aff_T$ to $T\HMod$ is well-defined on objects. 

Now, recall that a morphism of affine spaces $(A,G,\varrho_A) \lra (B,G',\varrho_B)$ is given by a function $F\colon A \lra B$ and a $T$-group homomorphism $f\colon G \lra G'$ such that
\[F\big(\varrho_A(v)(a)\big) = \varrho_B(f(v))(F(a)) \]
for all $a \in A$, $v \in G$. Therefore, if $c = \varrho_A(v)(b)$ then $F(c) = \varrho_B(f(v))(F(b))$ and hence
\[\big[F(a),F(b),F(c)\big] = \varrho_B(f(v))(F(a)) = F\big(\varrho_A(v)(a)\big) = F\big([a,b,c]\big)\]
for all $a,b,c \in A$. Moreover, if $b = \varrho_A(u)(a)$ then $F(b) = \varrho_B(f(u))(F(a))$ and hence
\[
\begin{aligned}
\Lambda(t,F(a),F(b)) &= \varrho_B(t\act f(u))(F(a)) = \varrho_B(f(t\act u))(F(a))\\
&= F\big(\varrho_A(t\act u)(a)\big) = F\big(\Lambda(t,a,b)\big).
\end{aligned}
\]
Therefore, the assignment $(F,f)\lto F$ gives a well-defined function 
\[\Aff_T\big((A,G,\varrho_A),(B,G',\varrho_B)\big) \lra T\HMod\big((A,[-,-,-],\Lambda_A),(B,[-,-,-],\Lambda_B)\big).\]
Summing up, we have constructed a functor
\[\Phi\colon \Aff_T \lra T\HMod, \qquad \begin{cases} (A,G,\varrho_A) \lto (A,[-,-,-],\Lambda) \\ (F,f) \lto F \end{cases}.\]

In the opposite direction, let $(M,[-,-,-],\Lambda)$ be a heap of $T$-modules and let us consider the $T$-group $\mathrm{Trans}(M)$ with the action 
$\overline{\varrho_M} \colon \mathrm{Trans}(M) \lra \Sym M,$
given by the canonical inclusion. The shear map
\[\mathrm{Trans}(M) \times M \lra M \times M, \qquad \big(\tau_a^b,m\big) \lto \big(\tau_a^b(m),m\big),\]
has the inverse
$M \times M \lra \mathrm{Trans}(M) \times M$, $(m,n) \lto \big(\tau_n^m,n\big).$
Thus, $(M,\mathrm{Trans}(M),\overline{\varrho_M})$ is a $T$-affine space 

Moreover, if $F\colon  (M,\Lambda_M) \lra (N,\Lambda_N)$ 
is a morphism of heaps of $T$-modules, then the pair $(F,\mathrm{Trans}(F))$ is a morphism of $T$-affine spaces directly from the definition of $\mathrm{Trans}(F)$ and the actions $\overline{\varrho_M}$ and $\overline{\varrho_N}$.
Summing up, we have constructed a functor
\[\Psi\colon T\HMod \lra \Aff_T, \qquad \begin{cases} (M,[-,-,-],\Lambda_M) \lto (M,\mathrm{Trans}(M),\overline{\varrho_M}) \\ F \lto (F,\mathrm{Trans}(F)) \end{cases}.\]

To show that $\Psi$ is the quasi-inverse of $\Phi$, notice first of all that the heap operation and the action  on $\Phi\Psi(M,\Lambda)$ turn out to be
\[
\overline{\varrho_M}\big(\tau_a^b\big)(m) = [m,a,b] \quad \text{and}  \quad
\overline{\varrho_M}\big(t \act \tau_a^b\big)(a) = \overline{\varrho_M}\Big(\tau_a^{\Lambda(t,a,b)}\Big)(a) = \Lambda(t,a,b)
.
\]
Hence $\Phi\Psi$ is the identity on objects. 

On the other hand, 
$\Psi\Phi(A,G,\varrho_A)  = (A,\mathrm{Trans}(A),\overline{\varrho_A}).$   The definition \eqref{heap} of the heap operation on $A$ gives that $\tau_a^b = \varrho_A(g)$ for all $a,b\in M$, where $g$ is the unique element in $G$ such that $\varrho_A(g)(a)  = b$. Hence   $\mathrm{Trans}(A) = \{\varrho_A(g)\; |\; g\in G\}$.  In view of this and the arguments used to prove the associative law for the heap operation on $A$  the assignment
\[\epsilon_G\colon \mathrm{Trans}(A) \lra G, \qquad \tau_a^b \lto g, \quad \mbox{with} \quad \varrho_A(g)(a)  = b, \]
is an isomorphism of abelian groups. It commutes with $T$-actions since
for $g \in G$ such that $\varrho_A(g)(a)  = b$, 
    \[\epsilon_G(t\act \tau_a^b) = \epsilon_G\Big(\tau_a^{\Lambda(t,a,b)}\Big) = \epsilon_G(\tau_a^{\varrho_A(t\cdot g)(a)}) = t\act g = t \act \epsilon_G\big(\tau_a^b\big).\]
    
Therefore, we can consider the pair $(\mathrm{id}_A,\epsilon_G) \colon  (A,\mathrm{Trans}(A),\overline{\varrho_A}) \lra (A,G,\varrho_A)$, which is an isomorphism of $T$-affine spaces since
\[\mathrm{id}_A\Big(\overline{\varrho_A}\big(\tau_a^b\big)(m)\Big) = \varrho_A(g)(m) = \mathrm{id}_A(m) \circ \epsilon_G\big(\tau_a^b\big)\]
for all $a,b,m \in A$, where $g \in G$ is the unique element such that $\varrho_A(g)(a) = b$. We leave to the reader checking the naturality of $(\mathrm{id}_A,\epsilon_G)$.
\end{proof}

\begin{corollary}\label{cor:isotropic}
For a truss $T$, the categories $\Aff_{T}^{\mathbf{is}}$ and $T\HMod_{\mathbf{is}}$ are equivalent.
\end{corollary}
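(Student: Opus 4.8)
The plan is to show that the equivalence $\Phi \colon \Aff_T \rightleftarrows T\HMod \colon \Psi$ constructed in Theorem \ref{thm:affT} restricts to an equivalence between the full subcategories $\Aff_T^{\mathbf{is}}$ and $T\HMod_{\mathbf{is}}$. Here I read $\Aff_T^{\mathbf{is}}$ as the full subcategory of those $T$-affine spaces $(A,G,\varrho_A)$ whose underlying $T$-group $G$ is isotropic, matching the definition of $T\GMod_{\mathbf{is}}$. Since both subcategories are \emph{full} and the natural isomorphisms $\Phi\Psi \cong \mathrm{id}$ and $\Psi\Phi \cong \mathrm{id}$ of Theorem \ref{thm:affT} automatically restrict to any pair of full subcategories that $\Phi$ and $\Psi$ preserve, the whole argument collapses to a single object-level verification: that $\Phi$ sends isotropic objects to isotropic objects, and that $\Psi$ does the same in the reverse direction.

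For $\Psi$, I would start from an isotropic heap of $T$-modules $(M,\Lambda)$, so there is $u \in \mathrm{Stab}(M)$, i.e.\ $\Lambda(u,m,n) = n$ for all $m,n \in M$. Recalling from Lemma \ref{lem:Trans} that the $T$-action on $\mathrm{Trans}(M)$ is $t \cdot \tau_a^b = \tau_a^{\Lambda(t,a,b)}$, evaluating at $t = u$ gives $u \cdot \tau_a^b = \tau_a^{\Lambda(u,a,b)} = \tau_a^b$ for every translation, so $u$ witnesses that $\mathrm{Trans}(M)$ is an isotropic $T$-group and $\Psi(M,\Lambda) \in \Aff_T^{\mathbf{is}}$. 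For $\Phi$, I would take $(A,G,\varrho_A) \in \Aff_T^{\mathbf{is}}$, so $G$ is isotropic with some $t_0 \in T$ satisfying $t_0 \cdot g = g$ for all $g \in G$. Since the action of $\Phi(A,G,\varrho_A)$ is $\Lambda(t,a,b) = \varrho_A(t \cdot u)(a)$, where $u \in G$ is the unique element with $\varrho_A(u)(a) = b$, setting $t = t_0$ and using $t_0 \cdot u = u$ yields $\Lambda(t_0,a,b) = \varrho_A(u)(a) = b$ for all $a,b \in A$. Hence $t_0 \in \mathrm{Stab}(\Phi(A,G,\varrho_A))$ and the image is isotropic.

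I expect no genuine obstacle: the striking feature is that \emph{the same} element of $T$ witnesses isotropy on both sides, which is forced by the way $\Phi$ and $\Psi$ are built so that the $T$-action on translations mirrors the action $\Lambda$. The only points needing a moment's care are bookkeeping, rather than mathematics. First, confirming the reading of ``isotropic'' on the affine side so that it corresponds, through the equivalence, exactly to $\mathrm{Stab}(M) \neq \varnothing$. Second, the degenerate objects: the empty heap and the empty $T$-affine space $(\varnothing,\{*\},\{*\}\hookrightarrow\{\id\})$ correspond under the equivalence, and one checks they are simultaneously isotropic or not, since $\mathrm{Stab}(\varnothing) = T$ is non-empty precisely when the trivial $T$-group $\{*\}$ admits some $t \in T$ acting as the identity, i.e.\ precisely when $T \neq \varnothing$. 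Thus the two notions agree even in this case, and the restricted functors $\Phi$ and $\Psi$ deliver the desired equivalence $\Aff_T^{\mathbf{is}} \simeq T\HMod_{\mathbf{is}}$.
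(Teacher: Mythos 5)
Your proposal is correct and follows essentially the same route as the paper: restrict the equivalence $(\Phi,\Psi)$ of Theorem \ref{thm:affT} to the full subcategories, checking that the same element $u\in T$ with $u\cdot g=g$ for all $g\in G$ satisfies $\Lambda(u,a,b)=\varrho_A(u\cdot g)(a)=b$, and conversely that $u\in\mathrm{Stab}(M)$ gives $u\cdot\tau_a^b=\tau_a^{\Lambda(u,a,b)}=\tau_a^b$. Your extra remark on the empty objects is a harmless addition the paper leaves implicit.
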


\begin{proof}
If $(A,G,\varrho_A)$ is an object in $\Aff_{T}^{\mathbf{is}}$ and if $u \in T$ is such that $u\cdot g = g$ for every $g \in G$, then for any $a,b\in A$ and $g\in G$ such that $\varrho_A(g)(a)=b$, we get that
$\Lambda(u,a,b)=\varrho_A(u\act g)(a)=\varrho_A(g)(a)=b$. Thus, $u \in \stab(A)$ and $(A,[-,-,-],\Lambda)$ is an isotropic heap of $T$-modules. Conversely, if $(M, [-,-,-],\Lambda)$ is an isotropic heap of $T$-module and $u \in \stab(M)$, then $\mathrm{Trans}(M)$ is an isotropic $T$-group as for all $a,b\in M$,
$
u \cdot \tau_a^b = \tau_a^{\Lambda(u,a,b)}=\tau_a^b.
$
Therefore $(M,\mathrm{Trans},\overline{\varrho_M})$ is an object in $\Aff_{T}^{\mathbf{is}}$. Moreover, since $\Aff_{T}^{\mathbf{is}}$ and $T\HMod_{\mathbf{is}}$ are full subcategories of $\Aff_{T}$ and $T\HMod$, respectively, the restrictions $\Phi|_{\Aff_{T}^{\mathbf{is}}}$ and $\Psi|_{T\HMod_{\mathbf{is}}}$ are well-defined, fully faithful, adjoint functors. Hence they form an equivalence. 
\end{proof}

Consider further the category $\Aff_{\star}^{\mathbf{is}}$ of affine spaces over the truss $T = \star$ whose objects are triples $(A,G,\varrho_A)$, where $G$ is an isotropic $\star$-group. \label{p-af-star-is}

\begin{corollary}\label{cor:classic}
The categories $\Aff_{\star}^{\mathbf{is}}$ and $\star\HMod_{\mathbf{is}}$ are equivalent.
\end{corollary}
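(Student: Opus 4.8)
The plan is to observe that the singleton $\star=\{*\}$ is itself a truss, so that Corollary~\ref{cor:isotropic}, which is stated for an arbitrary truss $T$, applies verbatim with $T=\star$; the statement is therefore nothing but this specialization. First I would check that the category $\Aff_{\star}^{\mathbf{is}}$ just introduced coincides with the instance at $T=\star$ of the full subcategory $\Aff_{T}^{\mathbf{is}}\subseteq\Aff_{T}$ occurring in Corollary~\ref{cor:isotropic}: both consist precisely of those $T$-affine spaces $(A,G,\varrho_A)$ whose structure group $G$ is an \emph{isotropic} $T$-group. Since these two descriptions agree for $T=\star$, there is nothing to establish beyond transporting the equivalence.

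Concretely, I would invoke the mutually quasi-inverse, fully faithful functors $\Phi|_{\Aff_{\star}^{\mathbf{is}}}$ and $\Psi|_{\star\HMod_{\mathbf{is}}}$ furnished by the proof of Corollary~\ref{cor:isotropic}: there $\Psi$ sends an isotropic heap of $\star$-modules $(M,\Lambda)$ to the $\star$-affine space $\big(M,\mathrm{Trans}(M),\overline{\varrho_M}\big)$, and $\Phi$ reverses the construction. As shown in that proof, the isotropy condition translates correctly under both functors---an isotropic structure group $G$ produces an element $u\in\stab(A)$, while $u\in\stab(M)$ makes $\mathrm{Trans}(M)$ an isotropic $\star$-group---so the two restricted functors land in the intended subcategories and remain an equivalence.

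Since the argument is a pure instantiation, I do not expect any genuine obstacle; the only point requiring a moment of care is the bookkeeping identification of $\Aff_{\star}^{\mathbf{is}}$ with $\Aff_{T}^{\mathbf{is}}$ evaluated at $T=\star$. It is worth recording, although strictly as commentary rather than as part of the proof, what the two sides unwind to: because $\stab(M)\neq\varnothing$ forces $\stab(M)=\star$, an isotropic heap of $\star$-modules carries the unique action $\Lambda(*,m,n)=n$ and is hence just an abelian heap, while an isotropic $\star$-group is merely an abelian group with the trivial unital action. Thus the corollary is the conceptual source of the equivalence between (inhabited) abelian heaps and torsors over abelian groups highlighted in the introduction, a consequence I would spell out in a following remark.
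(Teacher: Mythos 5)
Your proposal is correct and matches the paper's intent exactly: the paper states Corollary~\ref{cor:classic} without proof, as an immediate specialization of Corollary~\ref{cor:isotropic} to the truss $T=\star$, with the definition of $\Aff_{\star}^{\mathbf{is}}$ given just beforehand precisely so that the instantiation is verbatim. Your closing commentary on unwinding the two sides (isotropic heaps of $\star$-modules being abelian heaps, isotropic $\star$-groups being abelian groups) is likewise the content of the remark the paper places immediately after the corollary.
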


\begin{remark}
Since $\star\HMod_{\mathbf{is}}$ is isomorphic to $\ahrd$ and the category of inhabited isotropic $\star$-affine spaces is isomorphic to the category of torsors over abelian groups, Corollary \ref{cor:classic} recovers the 
equivalence between inhabited abelian heaps and abelian torsors.
\end{remark}

\begin{corollary}\label{cor:afffield}
The category $\Aff_\FF$ of affine spaces over a field $\FF$ is equivalent to the category $\left(\tT(\FF)\HMod_{\mathbf{is}}^{\mathbf{cn}}\right)^{\mathbf{in}}$ of 
inhabited isotropic contractible heaps of $\tT(\FF)$-modules.
\end{corollary}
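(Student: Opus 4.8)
The plan is to realise the desired equivalence as a restriction of the equivalence $\Phi\colon \Aff_{\tT(\FF)} \lra \tT(\FF)\HMod$ from Theorem~\ref{thm:affT}. By Remark~\ref{rem:affk}, $\Aff_\FF$ is a \emph{full} subcategory of $\Aff_{\tT(\FF)}$, and $\left(\tT(\FF)\HMod_{\mathbf{is}}^{\mathbf{cn}}\right)^{\mathbf{in}}$ is a full subcategory of $\tT(\FF)\HMod$; since both defining classes of objects are invariant under isomorphism, it suffices to check that $\Phi$ carries the objects of the former \emph{exactly} onto the objects of the latter. Fullness and faithfulness are then inherited from $\Phi$, and the quasi-inverse $\Psi$ restricts to give essential surjectivity. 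Concretely, by Remark~\ref{rem:affk} and the example preceding it, an object $(A,G,\varrho_A)$ of $\Aff_{\tT(\FF)}$ lies in $\Aff_\FF$ precisely when $A$ is non-empty and the $\tT(\FF)$-group $G$ is (the $\tT(\FF)$-group underlying) a vector space over $\FF$, so the whole task reduces to matching this description against \emph{inhabited}, \emph{isotropic} and \emph{contractible}.

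Writing $\Phi(A,G,\varrho_A) = (A,[-,-,-],\Lambda)$ with $\Lambda(t,a,b) = \varrho_A(t\act u)(a)$ for the unique $u\in G$ with $\varrho_A(u)(a)=b$, the underlying heap has underlying set $A$, so \emph{inhabited} matches $A\neq\varnothing$ tautologically. For the other two conditions I would invoke Theorem~\ref{thm:IUmods}: when the heap is isotropic, part~\ref{item:IUmod2} gives $1_\FF\in\stab(M)$, i.e.\ $\Lambda(1_\FF,a,b)=b$, which by freeness of $\varrho_A$ means $1_\FF\act g = g$ for all $g\in G$; and when it is contractible, part~\ref{item:IUmod1} gives $0_\FF\in\Ann(M)$, i.e.\ $\Lambda(0_\FF,a,b)=a$, which likewise means $0_\FF\act g = 0_G$ for all $g$. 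The converse implications are immediate, since a non-empty $\stab$ (resp.\ $\Ann$) is all that isotropy (resp.\ contractibility) requires. Hence $\Phi(A,G,\varrho_A)$ is inhabited, isotropic and contractible if and only if $A\neq\varnothing$, $1_\FF$ acts as the identity on $G$, and $0_\FF$ acts as $0$ on $G$. The remaining ingredient is the \emph{Key Lemma}: a $\tT(\FF)$-group on which $1_\FF$ acts as the identity and $0_\FF$ acts as zero is exactly a vector space over $\FF$. Here the semigroup action and the second identity in \eqref{eq:TGrp} supply associativity of scalars and distributivity over vector addition, the action of $1_\FF$ supplies unitality, and the last axiom $(a+b)\act g = a\act g + b\act g$ follows from the first identity in \eqref{eq:TGrp} applied to $a+b=[a,0_\FF,b]$ together with $0_\FF\act g=0_G$; the converse is clear. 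Combining this with the object-matching above identifies $\Aff_\FF$ with $\left(\tT(\FF)\HMod_{\mathbf{is}}^{\mathbf{cn}}\right)^{\mathbf{in}}$.

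The main delicate point is not computational but conceptual: the empty object. The empty $\tT(\FF)$-affine space has trivial fibre $\{*\}$, a zero-dimensional vector space, yet $\Phi$ sends it to the non-inhabited empty heap (which is vacuously isotropic and contractible). One therefore reads $\Aff_\FF$ in the standard convention that affine spaces are non-empty, which is exactly what the superscript $\mathbf{in}$ records on the other side. It is worth noting that the Key Lemma never uses that $\FF$ is a field — the same argument holds over any unital ring $R$, producing an $R$-module — so the field hypothesis only fixes the classical terminology. Accordingly, the statement can equivalently be obtained by specialising Corollary~\ref{cor:summRmodules} and Proposition~\ref{prop:RHMod} to $R=\FF$ and identifying non-empty affine $\FF$-modules with affine spaces over $\FF$ through Proposition~\ref{prop:affinemods}; I would mention this as a shortcut confirming the result.
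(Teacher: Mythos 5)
Your proposal is correct and follows essentially the same route as the paper: both restrict the equivalence $(\Phi,\Psi)$ of Theorem~\ref{thm:affT}, checking via $\Lambda(1,a,b)=b$ and $\Lambda(0,a,b)=a$ that $\Phi$ lands in the inhabited isotropic contractible heaps, and verifying that $\mathrm{Trans}(M)$ becomes an $\FF$-vector space for such $M$ --- your ``Key Lemma'' is exactly the computation the paper performs for $G=\mathrm{Trans}(M)$ (scalar additivity from the first identity in \eqref{eq:TGrp} together with $0_\FF$ acting as zero, unitality from isotropy). The only caveat is your closing aside: deriving the result from Corollary~\ref{cor:summRmodules} and Proposition~\ref{prop:affinemods} still requires identifying affine spaces over $\FF$ (group-action definition) with non-empty affine $\FF$-modules (ternary-operation definition), which the paper obtains only as a consequence of this very corollary, so that ``shortcut'' is not independent within the paper's logical structure.
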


\begin{proof}
We keep the notation introduced in Remark \ref{rem:affk}.
By definition, an affine space $(A,V,\circ)$ over $\FF$ is a non-empty $\tT(\FF)$-affine space, whence the functor $\Phi$ sends it to the inhabited heap of modules $(A,[-,-,-],\Lambda)$ with
\[[a,b,c] = v \circ c \qquad \text{and} \qquad \Lambda(k,a,b) = kv \circ a,\]
where $v = b-a \in V$ is the unique element such that $v \circ a = b$. It is clear that
\[\Lambda(1,a,b) = v\circ a = b \qquad \text{and} \qquad \Lambda(0,a,b) = 0 \circ a = a\]
for all $a,b \in A$. Therefore, $\Phi$ restricts to
\[\Phi' \colon  \Aff_\FF \lra \left(\tT(\FF)\HMod_{\mathbf{is}}^{\mathbf{cn}}\right)^{\mathbf{in}}.\]
In the opposite direction, the functor $\Psi$ assigns every non-empty object $(M,[-,-,-],\Lambda)$ in $\tT(\FF)\HMod_{\mathbf{is}}^{\mathbf{cn}}$ to the $\tT(\FF)$-affine space $(M,\mathrm{Trans}(M),\overline{\varrho_M})$, where $\mathrm{Trans}(M)$ is a $\tT(\FF)$-group with action
\[k \cdot \tau_m^n = \tau_m^{\Lambda(k,m,n)}, \qquad \forall\,k \in \FF, m,n \in M,\]
and $\overline{\varrho_M}\colon \mathrm{Trans}(M) \lra \mathrm{Aut}(M)$ induces a free and transitive action
\[\mathrm{Trans}(M) \times M \lra M, \qquad \big(\tau_m^n,p\big) \lto [p,m,n].\]
In this case, and in view of Theorem \ref{thm:IUmods}, $\mathrm{Trans}(M)$ becomes a $\FF$-vector space because
$$
(k+k')\act\tau_a^b = \tau_a^{\Lambda\big([k,0,k'],a,b\big)}
= \tau_a^{[\Lambda(k,a,b),a,\Lambda(k',a,b)]} = \big(k\act \tau_a^b\big)\circ\big(k'\act\tau_a^b\big)
$$
and $1\cdot \tau_a^b = \tau_a^{\Lambda(1,a,b)} = \tau_a^b$.
Thus, $(M,\mathrm{Trans}(M),\circ)$ is an affine space over $\FF$ and $\Psi$ restricts to
\[\Psi'\colon \big(\tT(\FF)\HMod_{\mathbf{is}}^{\mathbf{cn}}\big)^{\mathbf{in}} \lra \Aff_\FF.\]
The pair of functors $(\Phi',\Psi')$ gives the stated equivalence.
\end{proof}

In this way, we may also recover the following fact.

\begin{corollary}
The category of affine spaces over a field $\FF$ described in terms of a binary operation $V \times A \lra A$ of a vector space on a set is equivalent to the category of affine spaces over $\FF$ described in terms of two ternary operations on a single non-empty set $A$ as in Definition \ref{def:affinemods}.
\end{corollary}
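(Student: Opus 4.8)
The plan is to recognise both categories named in the statement as categories already analysed in this section, and then to invoke the equivalence of Corollary~\ref{cor:afffield}. The category of affine spaces over $\FF$ described via a binary operation $V \times A \lra A$ of a vector space on a set is, by definition, the category $\Aff_\FF$ discussed in Remark~\ref{rem:affk}. On the other hand, the category of affine spaces over $\FF$ described via two ternary operations on a single non-empty set $A$ is, by Definition~\ref{def:affinemods} together with the convention adopted just after Proposition~\ref{prop:affinemods} (namely that morphisms of affine $\FF$-modules are precisely morphisms of the associated heaps of $\tT(\FF)$-module structures), exactly the category of non-empty affine $\FF$-modules.

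First I would apply Proposition~\ref{prop:affinemods} with $R = \FF$: an affine $\FF$-module is exactly an isotropic and contractible heap of $\tT(\FF)$-modules, and since morphisms on both sides are by definition morphisms of heaps of $\tT(\FF)$-modules, this identification is an isomorphism of categories. Restricting to inhabited objects, this says that the category of non-empty affine $\FF$-modules is (isomorphic to) $\left(\tT(\FF)\HMod_{\mathbf{is}}^{\mathbf{cn}}\right)^{\mathbf{in}}$; this is precisely the content of Corollary~\ref{cor:summRmodules} specialised to $R = \FF$ (together with Proposition~\ref{prop:RHMod} identifying $\FF\HMod$ with the inhabited isotropic contractible heaps of $\tT(\FF)$-modules).

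Finally, Corollary~\ref{cor:afffield} provides an equivalence between $\Aff_\FF$ and $\left(\tT(\FF)\HMod_{\mathbf{is}}^{\mathbf{cn}}\right)^{\mathbf{in}}$. Composing this equivalence with the isomorphism of the previous step yields the desired equivalence between the binary-operation description $\Aff_\FF$ and the ternary-operation description of Definition~\ref{def:affinemods}. I do not expect any substantial obstacle here, since all of the essential content has already been established; the only care needed is bookkeeping, namely verifying that the ``non-empty set'' hypothesis in the statement corresponds exactly to the inhabited restriction (superscript $\mathbf{in}$) and that the morphisms coincide on both sides, which holds because in each description morphisms are taken, by convention, to be the morphisms of heaps of $\tT(\FF)$-modules.
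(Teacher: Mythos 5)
Your proposal is correct and follows exactly the paper's route: the paper's proof is the one-line observation that both categories are equivalent to $\big(\tT(\FF)\HMod_{\mathbf{is}}^{\mathbf{cn}}\big)^{\mathbf{in}}$, via Corollary~\ref{cor:afffield} on one side and Corollary~\ref{cor:summRmodules} (through Propositions~\ref{prop:affinemods} and~\ref{prop:RHMod}) on the other. Your additional bookkeeping about morphisms and the inhabited restriction is accurate but merely spells out what the paper leaves implicit.
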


\begin{proof}
Both are equivalent to the category $\big(\tT(\FF)\HMod_{\mathbf{is}}^{\mathbf{cn}}\big)^{\mathbf{in}}$.
\end{proof}

\section{Examples and applications}\label{sec:appl}


We conclude by providing examples and applications of heaps of modules which are coming from the study of the set-theoretic solutions of the Yang-Baxter equation, non-commutative geometry and classical ring theory.

\subsection{The Baer-Kaplansky Theorem for \texorpdfstring{$T$}{T}-groups}\label{ssec:BK}

This first subsection is entirely devoted to an application of the theory developed in this paper, which represents the natural extension of the results from \cite{BreBrz:Bae} in view of what we proved in Section \ref{sec:aff}.

Let $S,T$ be trusses, $M$ be a $T$-group, $N$ be an $S$-group and let us denote by $\eE_T(M)$ the truss $T\HMod(\cH(M))$ of endomorphisms of the heap of $T$-modules $\cH(M)$.\label{page:ETM}

\begin{theorem}\label{thm:BK}
The trusses $\eE_T(M)$ and $\eE_S(N)$ are isomorphic if and only if there exists an isomorphism $\varphi\colon M \lra N$ of abelian groups and an isomorphism $\phi\colon T\GMod(M) \lra S\GMod(N)$ of trusses such that 
\begin{equation}\label{eq:varphilinear}
\varphi\big(F(m)\big) = \phi(F) \big(\varphi(m)\big)
\end{equation}
for all $m \in M$ and $F \in T\GMod(M)$.
\end{theorem}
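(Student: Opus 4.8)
The plan is to describe $\eE_T(M)$ explicitly as a truss of affine endomorphisms and then run a Baer--Kaplansky type recovery argument. By Corollary~\ref{cor:ringshoms} applied with $G=H=M$, a map $f\colon M\lra M$ belongs to $\eE_T(M)=T\HMod(\cH(\hH(M)))$ if and only if $f(x)=F(x)+c$ for a uniquely determined linear part $F=\pi(f)\coloneqq\tau^{0_M}_{f(0_M)}\circ f\in T\GMod(M)$ and translation $c=f(0_M)\in M$, where $+$ and $0_M$ are those of the abelian group underlying $M$. Computing composition gives $(F,c)\circ(G,d)=\big(FG,\,F(d)+c\big)$, while the ternary heap operation is pointwise. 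First I would record three structural facts. (i) By Example~\ref{ex:constmaps} the constant maps $\widehat m$ ($m\in M$) lie in $\eE_T(M)$, and they are exactly the affine maps with $F=0$; moreover $f$ is constant if and only if $f\circ g=f$ for all $g\in\eE_T(M)$ (constants satisfy this, and conversely $g=\widehat{m_0}$ gives $f=f\circ\widehat{m_0}=\widehat{f(m_0)}$). (ii) The set $C_M$ of constant maps is a two-sided ideal and $\pi\colon\eE_T(M)\lra T\GMod(M)$ is a surjective homomorphism of trusses with $\pi^{-1}(0)=C_M$, so $\eE_T(M)/C_M\cong T\GMod(M)$ as trusses. (iii) The map $\widehat m\lto m$ is an isomorphism of abelian heaps $C_M\cong M$.

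For the implication $(\Leftarrow)$ I would set $\Theta(f)\coloneqq\varphi\circ f\circ\varphi^{-1}$. If $f(x)=F(x)+c$ then, using that $\varphi$ is a group homomorphism together with \eqref{eq:varphilinear},
\[
\Theta(f)(y)=\varphi\big(F(\varphi^{-1}(y))\big)+\varphi(c)=\phi(F)(y)+\varphi(c),
\]
so $\Theta(f)$ is affine with linear part $\phi(F)\in S\GMod(N)$ and hence lies in $\eE_S(N)$ by Corollary~\ref{cor:ringshoms}. Conjugation preserves composition, and since $\varphi$ is a homomorphism of heaps it preserves the pointwise operation; thus $\Theta$ is a homomorphism of trusses. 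Its inverse is conjugation by $\varphi^{-1}$, which lands in $\eE_T(M)$ because \eqref{eq:varphilinear} transfers to $\phi^{-1}$ (set $F=\phi^{-1}(G)$ and $m=\varphi^{-1}(n)$). Hence $\Theta$ is an isomorphism of trusses.

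The substance is the converse $(\Rightarrow)$. Given a truss isomorphism $\Theta\colon\eE_T(M)\lra\eE_S(N)$, the multiplicative characterisation in (i) is preserved, so $\Theta$ restricts to a bijection $C_M\lra C_N$; through (iii) this yields a heap isomorphism $\bar\varphi\colon M\lra N$ with $\Theta(\widehat m)=\widehat{\bar\varphi(m)}$. Since $C_M$ is an ideal, $\Theta$ descends to the quotients and, through the identifications in (ii), produces a truss isomorphism $\phi\colon T\GMod(M)\lra S\GMod(N)$; explicitly, $\phi(F)$ is the linear part of $\Theta(\tilde F)$ for any affine lift $\tilde F$ of $F$. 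Writing $n_0\coloneqq\bar\varphi(0_M)$, I would then define $\varphi\coloneqq\tau^{0_N}_{n_0}\circ\bar\varphi$, which by Corollary~\ref{cor:AhAb} is an isomorphism of abelian groups sending $0_M$ to $0_N$. To obtain \eqref{eq:varphilinear} I would use the identity, valid for every affine lift $\tilde F$ of $F$ with $c=\tilde F(0_M)$,
\[
\big[\tilde F\circ\widehat m,\;\tilde F\circ\widehat{0_M},\;\widehat{0_M}\big]=\widehat{F(m)},
\]
which follows from $\tilde F\circ\widehat m=\widehat{F(m)+c}$, $\tilde F\circ\widehat{0_M}=\widehat{c}$, and the fact that the heap operation on $C_M$ is that of $M$. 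Applying $\Theta$ to both sides, using $\Theta(\widehat{0_M})=\widehat{n_0}$ and that $\Theta(\tilde F)$ has linear part $\phi(F)$, the right-hand side simplifies (because $\phi(F)$ is additive) to $\widehat{\phi(F)(\varphi(m))+n_0}$, whence $\bar\varphi(F(m))=\phi(F)(\varphi(m))+n_0$. As $\bar\varphi(F(m))=\varphi(F(m))+n_0$, this is precisely $\varphi(F(m))=\phi(F)(\varphi(m))$.

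I expect the main obstacle to be this final extraction of compatibility: the affine description of $\eE_T(M)$ conceals both the base point $0_M$ and the distinguished linear lifts, and an abstract truss isomorphism need not preserve the constant at $0_M$ nor the splitting into linear part and translation. What unlocks the argument is that the constants, the ideal $C_M$, and the quotient $T\GMod(M)$ are all characterised by the multiplicative structure alone, so they are automatically transported by $\Theta$, and that $\widehat{F(m)}$ has a lift-independent expression in terms of composition and the heap operation; this is exactly what forces \eqref{eq:varphilinear} to hold for the translated isomorphism $\varphi$ rather than for $\bar\varphi$.
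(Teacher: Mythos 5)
Your proof is correct and follows essentially the same route as the paper's: decompose each element of $\eE_T(M)$ into a linear part plus a constant via Corollary~\ref{cor:ringshoms}, recover $\varphi$ from the action of the truss isomorphism on the constant maps (which you rightly characterise purely multiplicatively by the absorption property $f\circ g=f$), and read off $\phi$ from the induced map on linear parts, your conjugation $\Theta(f)=\varphi\circ f\circ\varphi^{-1}$ being exactly the paper's $\Phi(f)=\phi(F)+\varphi(f(0))$. The paper defers the verification of \eqref{eq:varphilinear} to \cite{BreBrz:Bae}; your lift-independent identity $\big[\tilde F\circ\widehat m,\tilde F\circ\widehat{0_M},\widehat{0_M}\big]=\widehat{F(m)}$ supplies precisely that missing computation.
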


\begin{proof}
The proof follows closely that of \cite[Theorem 3.3]{BreBrz:Bae}, whence we just sketch it and we leave the details to the interested reader.

Suppose that there exists an isomorphism $\varphi\colon M \lra N$ of abelian groups and an isomorphism $\phi\colon T\GMod(M) \lra S\GMod(N)$ of trusses such that  \eqref{eq:varphilinear} is satisfied. By Corollary \ref{cor:ringshoms}, $f \in \eE_T(M)$ if and only if $F \coloneqq \tau_{f(0)}^{0} \circ f \in T\GMod(M)$ and hence $f = F + f(0)$. Thus we may define
\[\Phi\colon \eE_T(M) \lra \eE_S(N), \qquad f \mapsto \phi(F) + \varphi(f(0)).\]
Since $\Phi(f)(0) = \varphi(f(0))$ because $\phi(F)(0) = 0$, it follows that  $\tau_{\Phi(f)(0)}^{0}\circ \Phi(f) = \phi(F)$ and hence $\Phi$ is well-defined by Corollary \ref{cor:ringshoms} again. The proof that it is an isomorphism of trusses is identical to the first part of the proof of \cite[Theorem 3.3]{BreBrz:Bae}.

Conversely, suppose that we have an isomorphism of trusses $\Phi\colon \eE_T(M) \lra \eE_S(N)$. By Example \ref{ex:constmaps} we know that the constant map $\widehat{m}$ is in $\eE_T(M)$ and one can show, as in the proof of \cite[Theorem 2.2]{BreBrz:Bae}, that for every $m \in M$, there exists a unique $\psi(m) \in N$ such that $\Phi(\widehat{m}) = \widehat{\psi(m)}$. This induces an isomorphism of abelian heaps $\psi\colon M \lra N$ which, by setting $\varphi \coloneqq \psi - \psi(0)$, induces an isomorphism $\varphi\colon M \lra N$ of abelian groups (see Corollary \ref{cor:AhAb}). Furthermore, by Corollary \ref{cor:ringshoms}, the assignment
\[\phi \colon T\GMod(M) \lra S\GMod(N), \qquad F \lto \tau_{\Phi(F)(0)}^0 \circ \Phi(F),\]
is a well-defined function. As in the second part of the proof of \cite[Theorem 3.3]{BreBrz:Bae}, one may show that $\phi$ is an isomorphism of trusses satisfying \eqref{eq:varphilinear},
ending the proof.
\end{proof}

\begin{remark}
The core of the argument for which Theorem \ref{thm:BK} holds may be expressed as follows. Any $T$-group $M$ is a module over its endomorphism truss $T\GMod(M)$. By \cite[Theorem~4.2]{BrzRyb:con}, we may consider the crossed product $M\overset{0}{\rtimes} T\GMod(M)$. Then Corollary \ref{cor:ringshoms} entails that the assignment
\[\eE_T(M) \lra M\overset{0}{\rtimes} T\GMod(M), \qquad f \lto \big(f(0),F\big)\]
is an isomorphism of trusses with inverse
\[M\overset{0}{\rtimes} T\GMod(M) \lra \eE_T(M), \qquad \big(m',F\big) \lto \big[m \mapsto F(m) + m'\big]. \qedhere\]
\end{remark}

We conclude this subsection with a somehow finer result that can be obtained in the commutative framework. Notice that, for $T$ a commutative truss and $M$ a $T$-group, the truss $\eE_T(M)$ gains a $T$-module structure given by $(t\cdot f)(m) = t \cdot f(m)$ for all $t \in T, f \in \eE_T(M)$ and $m \in M$, making of it a $T$-group (the distinguished absorber being $\widehat{0_M}$). In this setting, $\eE_T(M)$ can be seen as a heap of $T$-modules $\cH\big(\hH\big(\eE_T(M)\big)\big)$ as in Proposition \ref{prop:homia} and the binary composition law $\circ \colon \eE_T(M) \times \eE_T(M) \lra \eE_T(M)$ (granting the truss structure) is a morphism of heap of $T$-modules in both entries separately. That is to say, $\eE_T(M)$ is what we may call a \emph{heap of modules truss} over $T$.

\begin{theorem}
Let $T$ be a commutative truss and let $M$ and $N$ be $T$-groups. Then $M \cong N$ as $T$-groups if and only if $\eE_T(M) \cong \eE_T(N)$ as heap of modules trusses over $T$. Furthermore, for every isomorphism $\Psi \colon \eE_T(M) \lra \eE_T(N)$ of trusses which is also $T$-linear there exists a unique isomorphism $\psi\colon M \lra N$ of $T$-modules such that $\Psi(f) = \psi \circ f \circ \psi^{-1}$ for all $f \in \eE_T(M)$.
\end{theorem}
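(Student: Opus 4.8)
The plan is to prove the two implications of the biconditional together with the final clause, since the whole of the work lies in reconstructing a map $\psi\colon M\to N$ out of a given isomorphism $\Psi$, the reverse direction being a routine verification that conjugation works. So I would first dispatch the easy direction: given an isomorphism of $T$-groups $\psi\colon M\to N$, the assignment $\Psi(f)\coloneqq \psi\circ f\circ\psi^{-1}$ is well defined on $\eE_T(M)$ because $\psi,\psi^{-1}$ are morphisms of heaps of $T$-modules, and a direct check shows that $\Psi$ preserves composition, the pointwise heap operation, and the $T$-action (the last using $T$-linearity of $\psi$), so $\Psi$ is an isomorphism of heap of modules trusses; moreover $\Psi(\widehat{0_M})=\widehat{\psi(0_M)}=\widehat{0_N}$. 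This settles the ``only if'' part of the biconditional and the existence of an isomorphism $\Psi$ realised by conjugation.

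For the substantial direction, suppose $\Psi\colon\eE_T(M)\to\eE_T(N)$ is a $T$-linear isomorphism of trusses. The first step, as in the proof of \cite[Theorem 2.2]{BreBrz:Bae}, is to characterise the constant maps of Example~\ref{ex:constmaps} \emph{intrinsically}: each $\widehat{m}$ is a left absorber for composition, i.e.\ $\widehat{m}\circ g=\widehat{m}$ for all $g$, and conversely any $g$ with $g\circ f=g$ for all $f$ is forced to be constant by taking $f=\widehat{n}$, since then $g=g\circ\widehat{n}$ is the constant map at $g(n)$. As this purely truss-theoretic property is preserved by $\Psi$, it carries constant maps bijectively onto constant maps, and I define $\psi\colon M\to N$ by $\Psi(\widehat{m})=\widehat{\psi(m)}$. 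Applying $\Psi$ to $\widehat{[a,b,c]}=[\widehat{a},\widehat{b},\widehat{c}]$ shows $\psi$ is an isomorphism of heaps, while applying $\Psi$ to the identity $t\cdot\widehat{m}=\widehat{t\cdot m}$ and using $T$-linearity of $\Psi$ gives $\psi(t\cdot m)=t\cdot\psi(m)$, so $\psi$ is an isomorphism of $T$-modules. The conjugation formula then drops out of $f\circ\widehat{m}=\widehat{f(m)}$: applying $\Psi$ gives $\Psi(f)\circ\widehat{\psi(m)}=\widehat{\psi(f(m))}$, that is $\Psi(f)(\psi(m))=\psi(f(m))$ for all $m$, which rearranges to $\Psi(f)=\psi\circ f\circ\psi^{-1}$. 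Uniqueness is immediate: if $\psi_1,\psi_2$ both satisfy the formula, then $\alpha\coloneqq\psi_2^{-1}\psi_1$ commutes with every endomorphism, in particular with every $\widehat{m}$, forcing $\alpha(m)=m$ and hence $\psi_1=\psi_2$.

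It remains to upgrade the $T$-module isomorphism $\psi$ to a genuine isomorphism of $T$-groups in order to conclude $M\cong N$, and this is the one point I expect to require care. The map $\psi$ need not send $0_M$ to $0_N$; however, since $0_M$ is an absorber of the $T$-group $M$ and $\psi$ is $T$-linear, $e_0\coloneqq\psi(0_M)$ satisfies $t\cdot e_0=\psi(t\cdot 0_M)=\psi(0_M)=e_0$ for all $t$, so $e_0$ is an absorber of $N$. Setting $\psi'\coloneqq\tau_{e_0}^{0_N}\circ\psi$, so that $\psi'(m)=\psi(m)-e_0$ in the additive group of $N$, we obtain $\psi'(0_M)=0_N$, whence $\psi'$ is a group isomorphism by Proposition~\ref{prop:AhAb}; and it remains $T$-linear precisely because $t\cdot e_0=e_0$ gives $t\cdot\psi'(m)=t\cdot\psi(m)-t\cdot e_0=t\cdot\psi(m)-e_0=\psi'(t\cdot m)$ (using $t\cdot(a-b)=t\cdot a-t\cdot b$ in a $T$-group). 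Thus $\psi'\colon M\to N$ is an isomorphism of $T$-groups, completing the biconditional.

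The main obstacle is exactly this last manoeuvre. Conjugation, and hence the constant-map reconstruction, naturally produces only a $T$-\emph{module} isomorphism $\psi$, and the delicate point is that translating $\psi$ to fix the origin would in general destroy $T$-linearity were it not for the absorber property of $\psi(0_M)$; it is this property that reconciles the $T$-module statement of the ``Furthermore'' clause (which uses $\psi$ verbatim, since translating it would spoil the conjugation identity) with the $T$-group statement of the biconditional (which uses the corrected $\psi'$). All remaining verifications — that conjugation is a truss map, that $\psi$ is a heap map, and the distributivity identities in $T$-groups — are routine.
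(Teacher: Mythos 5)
Your proof is correct and follows essentially the same route as the paper, which handles the easy direction by conjugation and delegates the converse to the constant-maps (left-absorber) reconstruction of $\psi$ from the Baer--Kaplansky argument of Theorem~\ref{thm:BK} and \cite[Theorem 2.2]{BreBrz:Bae}. You have merely made explicit the two points the paper leaves implicit -- that $T$-linearity of $\Psi$ transfers to $\psi$ via $t\cdot\widehat{m}=\widehat{t\cdot m}$, and that $\psi(0_M)$ is an absorber of $N$ so that the translation $\tau_{\psi(0_M)}^{0_N}\circ\psi$ remains $T$-linear and yields the $T$-group isomorphism -- which is exactly what the paper's phrase ``the isomorphism $\varphi$ from the proof of Theorem~\ref{thm:BK} is of $T$-groups'' relies on.
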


\begin{proof}
If $\varphi \colon M \lra N$ is an isomorphism of $T$-groups, then 
\[
\Phi \colon \eE_T(M) \lra \eE_T(N), \qquad f \lto \varphi \circ f \circ \varphi^{-1}, 
\]
is an isomorphism of trusses and of (heap of) $T$-modules as well, whence it is of heap of modules trusses over $T$ (observe that $\Phi$ is the morphism induced by $\varphi$ and $\phi\colon T\GMod(M) \lra T\GMod(N)$, $F \lto \varphi \circ F \circ \varphi^{-1}$, as in Theorem \ref{thm:BK}).
Conversely, if $\Phi \colon \eE_T(M) \lra \eE_T(N)$ is an isomorphism of heap of modules trusses over $T$, then 
the isomorphism $\varphi \colon M \lra N$ from the proof of Theorem \ref{thm:BK} is of $T$-groups.
Concerning the last statement, it follows by the same argument used to prove \cite[Theorem 2.2]{BreBrz:Bae}.
\end{proof}

\subsection{Shelves, spindles, racks, quandles and the Yang-Baxter equation}\label{ssec:spindle}
Heaps of modules give rise to examples of spindles and quandles that play an important role in the algebraic approach to knot theory and that also lead to solutions of the set-theoretic Yang-Baxter equation. We begin by recalling the necessary notions.

\begin{definition}[Shelves, spindles, racks and quandles]\label{def:quandle}
A {\em left shelf} is a set $X$ together with a left self-distributive binary operation $\diamond\colon  X\times X \lra X$, that is, for all $x,y,z\in X$,
\begin{equation}\label{self.distr}
x\diamond(y\diamond z) = (x\diamond y)\diamond(x\diamond z).
\end{equation}

A left shelf  $(X,\diamond)$ is called a {\em left spindle} provided that the operation $\diamond$ is idempotent, that is, for all $x\in X$, 
\begin{equation}\label{idem}
x\diamond x=x.
\end{equation}

A left shelf $(X,\diamond)$  (respectively spindle) is called a {\em left rack} (respectively {\em left quandle}) if $\diamond$ admits left division, that is, for all $x,y \in X$, there exists unique $x\bs y \in X$ such that
\begin{equation}\label{div}
x\diamond(x\bs y ) = y.
\end{equation}

A shelf, spindle, rack or quandle  $(X,\diamond)$ is said to be {\em entropic} (or {\em medial} or {\em abelian}), if, for all $x,y,z,w\in X$,
\begin{equation}\label{entropy.q}
(x\diamond y)\diamond(z\diamond w) = (x\diamond z)\diamond(y\diamond w).
\end{equation}

Morphisms of shelves, spindles, racks, quandles, are naturally defined. We denote by $\spin$ and $\quandle$ the categories of left spindles and quandles, respectively.\label{p-quandle}
\end{definition}

\begin{remark}\label{rem:entropy}
Note that the conditions \eqref{idem} and \eqref{entropy.q} imply the left self-distributivity \eqref{self.distr} as well as the right self-distributivity of $\diamond $. So, an entropic left spindle can be safely referred to simply as an entropic spindle. 
\end{remark}

\begin{remark}\label{rem:history}
Self-distributive operations that form a shelf have appeared already in logic in the works of C.S.\ Peirce \cite{Pei:alg}, but their possibly first systematic study is presented in \cite{BurMay:dis}. 

The notion of a quandle was introduced by D. Joyce \cite{Joy:cla} as an algebraic system that encodes the Reidemeister moves of knot theory.  In this context a spindle describes the first and the third Reidemeister moves. The term {\em rack} was coined by Fenn and Rourke in \cite{FenRou:rac}. They attribute this notion to J.C.\ Conway and G.C.\ Wraith who, in their unpublished correspondence, refer to it as a  {\em wrack}.
\end{remark}

\begin{example}\label{ex:affspin}
Let $(A,+)$ be an abelian group. For every endomorphism $f$ of $A$, define the operation $x\diamond _f y=x+f(y-x)$. Then $(A,\diamond _f)$ is an entropic spindle that will be called \textit{the affine spindle induced by $f$}. It enjoys the interesting property $x\diamond _f y=y\diamond _{\id-f} x$.

More generally, if $(H,[-,-,-])$ is an abelian heap and $f$ is an endomorphism of $H$, then $x\diamond _f y \coloneqq [f(y),f(x),x]$ defines a structure of entropic left spindle over $H$.
\end{example}

Recall from \cite{Dri:uns} that a function $r\colon  X\times X \lra X\times X$ is said to be a solution to the set-theoretic Yang-Baxter equation provided that the equality
\[
(r\times \id)\circ (\id \times r) \circ (r\times \id) = (\id\times r) \circ (r\times \id)\circ (\id \times r)
\]
holds in $X\times X\times X$. We do not ask for $r$ to be invertible, in general. We say that $r(x,y) = \big(r_1(x,y),r_2(x,y)\big)$ is non-degenerate \cite[Definition 1.1]{EtiSchSol-STS} if $r$ is invertible and 
\[x \lto r_2(x,y) \qquad \text{and} \qquad x \lto r_1(y,x)\]
are invertible as functions $X \lra X$ for any fixed $y \in X$.

Bearing in mind the role that the Yang-Baxter equation plays in the knot theory \cite{Tur:Yan} and that both self-distributivity and the Yang-Baxter equation correspond to the third Reidemeister move, the following lemma is not surprising.

\begin{lemma}[{\cite[\S9, Example 2]{Dri:uns}, see also \cite[Lemma 61]{Crans}}]\label{lem:rack.sol}
Let $X$ be a set with a binary operation $\diamond \colon  X \times X \lra X$. Then $(X,\diamond )$ is a left shelf if and only if the function
$$
r\colon  X\times X\lra X\times X, \qquad (x,y)\lto (x\diamond y,x),
$$
is a solution to the set-theoretic Yang-Baxter equation.
\end{lemma}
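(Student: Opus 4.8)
The plan is to prove the equivalence by a direct evaluation of the two composite maps appearing in the Yang--Baxter equation on an arbitrary triple $(x,y,z) \in X\times X\times X$, followed by a componentwise comparison. The only point requiring care is the bookkeeping: at each stage one must track which adjacent pair of slots the map $r$ acts upon, since $r\times \id$ modifies the first two entries while $\id\times r$ modifies the last two, and $r(x,y) = (x\diamond y, x)$ always copies the \emph{first} argument into the second output slot.

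First I would compute the left-hand side $(r\times \id)\circ (\id\times r)\circ (r\times \id)$ applied to $(x,y,z)$, obtaining successively
\[(x,y,z)\ \lto\ (x\diamond y,\, x,\, z)\ \lto\ (x\diamond y,\, x\diamond z,\, x)\ \lto\ \big((x\diamond y)\diamond(x\diamond z),\; x\diamond y,\; x\big).\]
Then I would compute the right-hand side $(\id\times r)\circ (r\times \id)\circ (\id\times r)$ on the same triple, obtaining
\[(x,y,z)\ \lto\ (x,\, y\diamond z,\, y)\ \lto\ \big(x\diamond(y\diamond z),\; x,\; y\big)\ \lto\ \big(x\diamond(y\diamond z),\; x\diamond y,\; x\big).\]

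Comparing the two outputs, the second and third components coincide unconditionally, so the Yang--Baxter equation holds on every triple if and only if the first components agree for all $x,y,z\in X$, that is, if and only if
\[(x\diamond y)\diamond(x\diamond z) = x\diamond(y\diamond z).\]
This is exactly the left self-distributivity \eqref{self.distr} defining a left shelf, which delivers both implications simultaneously. I do not expect any genuine obstacle here: the entire content lies in the careful symbolic evaluation of the six-fold composite, and the fact that left self-distributivity emerges as precisely the constraint on the first coordinate (while the remaining coordinates match automatically) is what makes the correspondence transparent.
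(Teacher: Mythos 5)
Your computation is correct: both composites are evaluated accurately, the last two coordinates agree automatically, and the first coordinates coincide for all triples precisely when left self-distributivity holds, which gives both implications. The paper cites this result to Drinfeld and Crans without reproducing a proof, and your direct verification is exactly the standard argument behind that citation.
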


The key for unlocking the connection between heaps of modules and spindles and quandles can be found in the following result.

\begin{theorem}\label{thm:quandle}
Let $(M,\Lambda)$ be a heap of modules over a truss $T$ and let $u,v\in T$ be such that for some (equivalently, all) $m\in M$ and for all $n\in M$,
 $\Lambda (uv,m,n) =\Lambda(vu,m,n)$. Then, the operations 
\[
\diamond _u, \diamond _v\colon  M\times M \lra M, \qquad m\diamond _un \coloneqq \Lambda(u,m,n), \quad m\diamond _vn \coloneqq \Lambda(v,m,n),
\]
are idempotent and satisfy the entropic law, for all $x,y,z,w\in M$:
\[
(x\diamond _uy)\diamond _v(z\diamond _uw) = (x\diamond _vz)\diamond _u(y\diamond _vw).
\]

In particular, for any $u\in T$, $(M, \diamond _u)$ is an entropic {left} spindle and the assignment $(M,\Lambda) \lto (M,\diamond _u)$ induces a fully faithful functor $\Ff_u\colon T\HMod \lra \spin$. Consequently, for any $u\in T$, the function
$$
r_u\colon  M\times M\lra M\times M, \qquad (m,n)\lto (\Lambda(u,m,n),m),
$$
is a solution to the set-theoretic Yang-Baxter equation.

If, in addition, there exists $\bar{u}\in T$ such that, for all $m,n$,
\begin{equation}\label{div.lam}
\Lambda(u\bar{u}, m,n) = n,
\end{equation}
then $(M,\diamond _u)$ is an entropic left quandle and the functor $\Ff_u\colon T\HMod \lra \spin$ induces a fully faithful functor $\Ff_u\colon T\HMod \lra \quandle$. Consequently, for such an $u\in T$, the function
$$
r_u\colon  M\times M\lra M\times M, \qquad (m,n)\lto (\Lambda(u,m,n),m),
$$
is a non-degenerate solution to the set-theoretic Yang-Baxter equation with inverse
$$
r^{-1}_u\colon  M\times M\lra M\times M, \qquad (m,n)\lto (n,\Lambda(\bar{u},n,m)).
$$
\end{theorem}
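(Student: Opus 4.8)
The plan is to deduce the whole statement from the entropy Lemma~\ref{lem:entropy}, the shelf/Yang--Baxter correspondence of Lemma~\ref{lem:rack.sol}, and $T$-associativity, treating $\diamond_u$ and $\diamond_v$ on the same footing. Idempotency is immediate from \eqref{eq:idempotent}: $m\diamond_u m=\Lambda(u,m,m)=m$, and likewise for $\diamond_v$. For the mixed entropic identity I would invoke Lemma~\ref{lem:entropy} with $t=v$ and $t'=u$; its hypothesis is exactly $\Lambda(vu,m,n)=\Lambda(uv,m,n)$, which is our standing assumption (and, as the proof of that lemma records, holds for all middle terms once it holds for one), while its conclusion
\[
\Lambda\big(v,\Lambda(u,x,y),\Lambda(u,z,w)\big)=\Lambda\big(u,\Lambda(v,x,z),\Lambda(v,y,w)\big)
\]
reads precisely as $(x\diamond_u y)\diamond_v(z\diamond_u w)=(x\diamond_v z)\diamond_u(y\diamond_v w)$.

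Specialising to $v=u$ (where the commutation hypothesis becomes vacuous) gives the entropic law \eqref{entropy.q} for $\diamond_u$, and combined with idempotency this yields left self-distributivity through Remark~\ref{rem:entropy}, so $(M,\diamond_u)$ is an entropic left spindle. As a spindle is a left shelf, Lemma~\ref{lem:rack.sol} then delivers the Yang--Baxter solution $r_u(m,n)=(\Lambda(u,m,n),m)$ for free. Functoriality of $\Ff_u$ is routine: any $f\in T\HMod(M,N)$ satisfies $f(m\diamond_u n)=f(\Lambda_M(u,m,n))=\Lambda_N(u,f(m),f(n))=f(m)\diamond_u f(n)$ by \eqref{eq:morph}, so $\Ff_u(f)\coloneqq f$ is a spindle morphism, with identities and composites preserved verbatim; faithfulness is clear since $\Ff_u$ leaves the underlying map unchanged.

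For the quandle refinement I would posit $x\bs y\coloneqq\Lambda(\bar u,x,y)$ as the left division. Its defining property is $T$-associativity \eqref{eq:ass}: $x\diamond_u(x\bs y)=\Lambda(u,x,\Lambda(\bar u,x,y))=\Lambda(u\bar u,x,y)=y$ by \eqref{div.lam}. To obtain uniqueness of divisions (hence a genuine quandle) and to force both composites $r_u\circ r_u^{-1}$ and $r_u^{-1}\circ r_u$ to be the identity, I need $\bar u$ to be a two-sided companion, i.e.\ $\Lambda(\bar u u,m,n)=\Lambda(u\bar u,m,n)=n$; this I read off from \eqref{div.lam} together with the commutation hypothesis applied to the pair $(u,\bar u)$. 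Granting it, $\Lambda(\bar u,x,-)$ inverts $\Lambda(u,x,-)$ on both sides, the formula $r_u^{-1}(m,n)=(n,\Lambda(\bar u,n,m))$ follows from the same two associativity computations, and non-degeneracy holds because $x\mapsto r_2(x,y)=x$ is the identity while $x\mapsto r_1(y,x)=\Lambda(u,y,x)$ is inverted by $x\mapsto\Lambda(\bar u,y,x)$.

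The step I expect to be the real obstacle is the fullness of $\Ff_u$. Here one is handed only a map $g\colon M\to N$ intertwining $\Lambda(u,-,-)$ and must upgrade it to a morphism of heaps of $T$-modules, that is, show it preserves the bracket $[-,-,-]$ and every scalar action $\Lambda(t,-,-)$. My plan would be to fix a basepoint $e$, pass to the induced $T$-modules $(M,\act_e)$ and $(N,\act_{g(e)})$ of Lemma~\ref{lem:homtoind}, and reconstruct the bracket and the remaining actions from $\diamond_u$ by means of idempotency and the base-change property \eqref{eq:basechange}. I anticipate this reconstruction to be the crux: for an unconstrained $u$ the spindle operation can retain strictly less information than $\Lambda$ (the trivial action $\Lambda(u,h,h')=h$ of Example~\ref{ex:trivial} makes $\diamond_u$ a mere projection), so closing the argument in full generality seems to require genuinely exploiting such a reconstruction, or else a regularity condition on $u$ ensuring that $\diamond_u$ determines $\Lambda$.
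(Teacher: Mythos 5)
For the parts of the statement that the paper actually verifies, your route is exactly the paper's: idempotency of $\diamond_u$ from \eqref{eq:idempotent}, the mixed entropic identity from Lemma~\ref{lem:entropy} with $t=v$, $t'=u$, self-distributivity via Remark~\ref{rem:entropy}, the Yang--Baxter solution via Lemma~\ref{lem:rack.sol}, and the one-line check that a morphism of heaps of $T$-modules is a spindle morphism. Nothing to add there.

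The step you single out as the crux --- fullness of $\Ff_u$ --- is precisely the step the paper's proof does not address: it only shows that $\Ff_u$ is well defined on morphisms (faithfulness being obvious, as you say). Your worry is not a defect of your argument but a genuine issue with the claim. Indeed, your own observation already settles it: for the trivial action $\Lambda(t,h,h')=h$ of Example~\ref{ex:trivial}, the operation $\diamond_u$ is the left projection, so every set map $M\lra N$ is a morphism of the associated spindles, whereas $T\HMod(M,N)$ then coincides with $\Ah(M,N)$, a proper subset of $\Set(M,N)$ for abelian heaps with at least three elements. Hence no reconstruction of $\Lambda$ from $\diamond_u$ can exist in this generality, and ``full'' requires a restriction on $u$ or on the objects; you have not missed an idea that the paper possesses.

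On the quandle refinement you are again more careful than the source. Existence of the division $x\bs y=\Lambda(\bar u,x,y)$ follows from \eqref{eq:ass} and \eqref{div.lam} exactly as you compute, but uniqueness in \eqref{div} (injectivity of $\Lambda(u,x,-)$) and the composite $r_u^{-1}\circ r_u=\id$ both need $\Lambda(\bar u u,m,n)=n$; the hypothesis \eqref{div.lam} only gives the other composite. Your proposed derivation of the two-sided condition from ``the commutation hypothesis applied to $(u,\bar u)$'' is not licensed by the statement, which assumes commutation only for the fixed pair $(u,v)$; one must add $\Lambda(\bar u u,m,n)=\Lambda(u\bar u,m,n)$ as a hypothesis (automatic in the intended applications, where $\bar u=u^{-1}$ in a ring). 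The paper's proof is silent on this point as well, so here your attempt and the paper share the same gap --- yours at least names the missing ingredient explicitly.
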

\begin{proof}
The idempotent property of $\diamond _u$ follows from \eqref{eq:idempotent}, the entropic property \eqref{entropy.q} follows by Lemma~\ref{lem:entropy} and thus the self-distributivity holds by Remark~\ref{rem:entropy}. 

Furthermore, if $f\colon (M,\Lambda_M) \lra (N,\Lambda_N)$ is a morphism of heaps of $T$-modules, then
\[f\left(m \diamond _u n\right) = f\left(\Lambda_M(u,m,n)\right) \stackrel{\eqref{eq:morph}}{=} \Lambda_N(u,f(m),f(n)) = f(m) \diamond _u f(n)\]
for all $m,n \in M$, whence it is a morphism of left spindles.
The fact that $r_u$ is a solution of the set-theoretic Yang-Baxter equation follows by Lemma~\ref{lem:rack.sol}.

If there exists $\bar{u}$ such that \eqref{div.lam} holds, then, for all $m,n\in M$, we can define $m\bs n =\Lambda(\bar{u},m,n)$. The associativity of $\Lambda$ \eqref{eq:ass} together with \eqref{div.lam} imply \eqref{div}.
\end{proof}

\begin{example}\label{ex:HModaffspindle}
Let $(A,+)$ be an abelian group. Then the set of all affine spindles defined on $A$ as in Example \ref{ex:affspin} is induced by the structure of heap of $\tT(\mathrm{End}(A))$-modules on $A$ coming from Proposition \ref{prop:homia}, as in Theorem \ref{thm:quandle}. In fact, since $\Lambda(f,a,b) = \big[f(b),f(a),a\big]$ for all $a,b \in A$ and $f \in \mathrm{End}(A)$, we arrive exactly at $\Lambda(f,a,b) = a\diamond _f b$.
This allows us to organize the set of all the affine spindle structures on an abelian group $A$ into a heap of $\tT(\mathrm{End}(A))$-modules structure on $A$ itself.
It follows from Proposition \ref{prop:morphRing} that an endomorphism $\alpha$ of the induced abelian heap $(A,[-,-,-])$ is an endomorphism of this heap of $\tT(\mathrm{End}(A))$-modules structure if and only if there exists a central endomorphism $\overline{\alpha}$ of $(A,+)$ such that $\alpha(a)=\overline{\alpha}(a)+\alpha(0)$ for all $a\in A$.

A similar conclusion holds, more generally, for $H$ an abelian heap, considered as a heap of $\eE(H)$-modules as in Proposition \ref{prop:homia}.

Moreover, if $A$ is a left module over a ring $R$, the same construction induces a heap of $\mathrm{End}_R(A)$-modules. We observe that all multiplications $t_r \colon A\lra A$, $t_r(a)=r \cdot a$,, are morphisms of heap of $\mathrm{End}_R(A)$-modules.
\end{example}

\subsection{Connections, splittings and contractions}\label{ssec:split}

We now proceed to give some examples of heaps of modules which arise in a classical ring theory, starting with an example motivated by (non-commutative) geometry.

Let $(\Omega= \bigoplus_{n\in \NN} \Omega^n,d)$ be a (unital) differential graded algebra over a commutative ring $\k$. Denote by $\cO=\Omega^0$, the zero degree subalgebra of $\Omega$. Recall, for instance from \cite{Con:ncg}, that a {\em connection} on a left $\cO$-module $M$ is a $\k$-linear degree one map $\nabla\colon  \Omega\ot_\cO M \lra \Omega\ot_\cO M$ such that
\begin{equation}\label{connection}
\nabla(\omega \ot m) = d(\omega)\ot m + (-1)^k \omega \nabla(1\ot m),
\end{equation}
for all $\omega \in \Omega^k$ and $m\in M$. We denote the $k$-th component of $\nabla$ by $\nabla^k$. That is,
$$
\nabla^k\colon  \Omega^k\ot_\cO M \lra \Omega^{k+1}\ot_\cO M.
$$
Thanks to the connection Leibniz rule \eqref{connection}, $\nabla$ is fully determined by its zeroth component $\nabla^0$ (of which we can think as of a map $\nabla^0\colon  M\lra \Omega^1\ot_\cO M$).

In part dually, a {\em hom-connection} on a right $\cO$-module $M$ is defined as follows \cite{Brz:con}. Starting with a differential graded algebra  as before, we set $\Xi_k(M) = \lhom \cO {\Omega^{k}} M$, $k\in \NN$ and form the $\NN$-graded right  $\Omega$-module, 
$$
\Xi(M) =  \bigoplus_{k\in \NN} \Xi_k(M), \qquad  \big(\xi\cdot \omega\big)(\omega') = \xi(\omega\omega'), \quad \xi \in \Xi_{k+l}(M),\; \omega \in \Omega^k,\;\omega'\in \Omega^l.
$$
A hom-connection on $M$ is then a degree minus one $\k$-linear endomorphism $\Delta$ of $\Xi(M)$ such that, for all $\omega \in \Omega^{k}$ and $\xi \in \Xi_{k+1}(M)$,
 \[
 \Delta(\xi) (\omega) = \Delta(\xi\cdot \omega) + (-1)^{k+1} \xi(d\omega).
 \]
 The $k$-th component of $\Delta$, i.e., $\Delta\mid_{\Xi_{k+1}}$ is denoted by $\Delta_k$.

\begin{exlemma}[Heap of connections]\label{ex:connection}
Let $(\Omega= \bigoplus_{n\in \NN} \Omega^n,d)$ be a differential graded algebra over a commutative ring $\k$, $A$ be an algebra over $\k$, and $M$ be an $\cO$-$A$-bimodule. Consider  $\lend \k {\Omega\ot_\cO M}$  as a left $A$-module with the dual action
$
(a \act\gamma)(\omega \ot m) = \gamma(\omega\ot m\act a),
$
for all $a\in A$, $\gamma\in\lend{\k} {\Omega\ot_\cO M}$,  $\omega\in \Omega$ and $m\in M$.  

Let $\Gamma^k(M)\subseteq \lhom\k{\Omega^k\ot_\cO M}{\Omega^{k+1}\ot_\cO M}$ denote the $\k$-module of all $k$-components of connections on $M$. Then $\Gamma^k(M)$ is an induced $\tT(A)$-submodule of $\lend \k {\Omega\ot_\cO M}$, and hence a heap of $\tT(A)$-modules with the action
\begin{equation}\label{lam.conn}
\Lambda(a,\nabla^k_1,\nabla^k_2) = \nabla^k_1- a\act \nabla^k_1 +a\act \nabla^k_2.
\end{equation}

Furthermore, for all $u\in A$, $\Gamma^k(M)$ is a left spindle with the operation 
$$
\nabla_1^k\diamond _u\nabla_2^k = \nabla_1^k -u\act \nabla_1^k + u\act \nabla_2^k.
$$
Consequently, the function
$$
r_u\colon  \Gamma^k{(M)}\times \Gamma^k{(M)}\lra \Gamma^k{(M)}\times \Gamma^k(M), \quad (\nabla_1^k,\nabla_2^k)\lto  (\nabla_1^k -u\act\nabla_1^k + u\act\nabla_2^k,\nabla_1^k),
$$
is  a solution to the set-theoretic Yang-Baxter equation.

If $u$ is a unit in $A$, then $(\Gamma^k(M),\diamond _u)$ is an entropic quandle.
\end{exlemma}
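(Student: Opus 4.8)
The plan is to reduce the statement to the machinery already developed in the section, so that the only genuinely new work lies in the first assertion. The dual action makes $\lend{\k}{\Omega\ot_\cO M}$ a left $A$-module, hence a $\tT(A)$-module by Example~\ref{ex:modules}\eqref{ex:assomodule}, and this action preserves degrees, so it restricts to the space $\lhom{\k}{\Omega^k\ot_\cO M}{\Omega^{k+1}\ot_\cO M}$ of degree-raising maps sitting inside $\lend{\k}{\Omega\ot_\cO M}$. Granting that $\Gamma^k(M)$ is an induced $\tT(A)$-submodule, Proposition~\ref{prop:homia} immediately turns it into a heap of $\tT(A)$-modules with action $\Lambda(a,\nabla_1^k,\nabla_2^k)=a\triangleright_{\nabla_1^k}\nabla_2^k=\nabla_1^k-a\act\nabla_1^k+a\act\nabla_2^k$, which is exactly \eqref{lam.conn}; and all the spindle, quandle and Yang--Baxter assertions are then instances of Theorem~\ref{thm:quandle}. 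Thus the substantial point is the induced-submodule claim, which I address first (assuming $\Gamma^k(M)\neq\varnothing$, the empty case being trivial).

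First I would record the affine description of connection components. The Leibniz rule \eqref{connection} forces $\nabla^k(\omega\ot m)=d\omega\ot m+(-1)^k\omega\,\phi(m)$ for $\omega\in\Omega^k$, $m\in M$, where $\phi\coloneqq\nabla^0\colon M\to\Omega^1\ot_\cO M$ satisfies $\phi(rm)=dr\ot m+r\phi(m)$ for $r\in\cO$, and conversely every such $\phi$ comes from a connection. Consequently the difference of any two elements of $\Gamma^k(M)$ has the form $\Theta^k\colon\omega\ot m\mapsto(-1)^k\omega\,\theta(m)$ with $\theta\colon M\to\Omega^1\ot_\cO M$ left $\cO$-linear, and conversely adding such a $\Theta^k$ to a connection component yields again a connection component. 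Hence, fixing one $\nabla_0^k\in\Gamma^k(M)$, we get $\Gamma^k(M)=\nabla_0^k+V$, where $V$ is the $\k$-submodule of all maps of the above form; in particular $\Gamma^k(M)$ is a sub-heap of $\lend{\k}{\Omega\ot_\cO M}$, since $\nabla_1^k-\nabla_2^k+\nabla_3^k=\nabla_1^k+(\nabla_3^k-\nabla_2^k)\in\nabla_1^k+V=\Gamma^k(M)$.

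The crux is stability under the induced action. As the dual action is additive in its module argument, for $\nabla_1^k,\nabla_2^k\in\Gamma^k(M)$ and $a\in A$ we have $a\triangleright_{\nabla_1^k}\nabla_2^k=\nabla_1^k+a\act(\nabla_2^k-\nabla_1^k)$ with $\nabla_2^k-\nabla_1^k\in V$, so it suffices to prove $a\act V\subseteq V$. For $\Theta^k\colon\omega\ot m\mapsto(-1)^k\omega\,\theta(m)$ in $V$ one computes $(a\act\Theta^k)(\omega\ot m)=\Theta^k(\omega\ot m\act a)=(-1)^k\omega\,\theta(m\act a)$, and the map $m\mapsto\theta(m\act a)$ is again left $\cO$-linear because $(rm)\act a=r(m\act a)$ by bimodule associativity and $\theta$ is $\cO$-linear. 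Hence $a\act\Theta^k\in V$ and $a\triangleright_{\nabla_1^k}\nabla_2^k\in\nabla_1^k+V=\Gamma^k(M)$, so by Definition~\ref{def:induced} the induced-submodule property holds. This interplay between the dual right $A$-action and the left $\cO$-linearity of the difference maps is the only genuine obstacle; everything else is formal.

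The remaining assertions follow from Theorem~\ref{thm:quandle} with $T=\tT(A)$. Taking $v=u$ makes the commutation hypothesis $\Lambda(uv,m,n)=\Lambda(vu,m,n)$ trivially satisfied, so for every $u\in A$ the operation $\nabla_1^k\diamond_u\nabla_2^k=\Lambda(u,\nabla_1^k,\nabla_2^k)=\nabla_1^k-u\act\nabla_1^k+u\act\nabla_2^k$ makes $\Gamma^k(M)$ an entropic left spindle, and the associated $r_u$ is a set-theoretic Yang--Baxter solution. Finally, if $A$ is unital and $u\in A$ is a unit, I would take $\bar u\coloneqq u^{-1}$: since $m\act 1_A=m$ yields $1_A\act\gamma=\gamma$, we obtain $\Lambda(u\bar u,\nabla_1^k,\nabla_2^k)=\Lambda(1_A,\nabla_1^k,\nabla_2^k)=\nabla_1^k-\nabla_1^k+\nabla_2^k=\nabla_2^k$, which is precisely hypothesis \eqref{div.lam}. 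Theorem~\ref{thm:quandle} then upgrades $(\Gamma^k(M),\diamond_u)$ to an entropic left quandle, finishing the proof.
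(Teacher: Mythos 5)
Your proof is correct, and it reaches the key point --- stability of $\Gamma^k(M)$ under the induced action --- by a route that is organized differently from the paper's. The paper verifies head-on that $\Lambda(a,\nabla_1,\nabla_2)=\nabla_1-a\act\nabla_1+a\act\nabla_2$ satisfies the connection Leibniz rule \eqref{connection}, by expanding $\Lambda(a,\nabla_1,\nabla_2)(\omega\ot m)$ term by term and watching the $d(\omega)\ot m$ contributions cancel in pairs; that $\Gamma^k(M)$ is an induced submodule is then immediate, and (as in your argument) the remaining claims are delegated to Theorem~\ref{thm:quandle}. You instead invoke the classical affine description $\Gamma^k(M)=\nabla_0^k+V$, with $V$ the $\k$-module of tensorial maps $\omega\ot m\mapsto(-1)^k\omega\,\theta(m)$ for $\theta$ left $\cO$-linear, and reduce everything to the single inclusion $a\act V\subseteq V$, which follows from the bimodule identity $(rm)\act a=r(m\act a)$. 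The two arguments carry the same computational content, but yours isolates exactly where the $\cO$-$A$-bimodule hypothesis enters and makes explicit the submodule for which $\Gamma^k(M)$ is a congruence class --- something the paper only records afterwards, in the Remark following the example, where that submodule is named $S(k)$. The price is that you must also justify (as you do, briefly) that differences of connection components are precisely the tensorial maps, a characterisation the paper's direct computation never needs. Your handling of the spindle, Yang--Baxter and quandle assertions, including the choice $\bar u=u^{-1}$ to verify \eqref{div.lam}, coincides with the paper's.
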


\begin{proof}
It is clear that if $\nabla_1^k$, $\nabla_2^k$, $\nabla_3^k$ are the $k$-th components of connections $\nabla_1,\nabla_2,\nabla_3$ on $M$, respectively, then so does their heap bracket $[\nabla_1^k, \nabla_2^k, \nabla_3^k] = \nabla_1^k- \nabla_2^k +\nabla_3^k$. Thus we only need to check if the map $\Lambda(a,\nabla^k_1,\nabla^k_2)$ in \eqref{lam.conn} is the $k$-th component of a connection on $M$. Set $\Lambda(a,\nabla_1,\nabla_2) \coloneqq \nabla_1 - a\cdot \nabla_1 + a \cdot \nabla_2$. For all $\omega\in \Omega^k$ and $m\in M$ we observe that
$$
\begin{aligned}
\Lambda(a,\nabla_1,\nabla_2) &(\omega \ot m) = \nabla_1(\omega \ot m)- \nabla_1(\omega \ot m
\act a) +\nabla_2(\omega \ot m\act a)\\
&= d(\omega)\ot m + (-1)^k \omega \nabla_1(1\ot m) - d(\omega)\ot m\act a\\
& \quad + (-1)^{k+1} \omega \nabla_1(1\ot m\act a) + d(\omega)\ot m\act a 
+ (-1)^{k} \omega \nabla_2(1\ot m\act a)\\
&= d(\omega)\ot m + (-1)^k \omega \Big(\nabla_1(1\ot m)
-  \nabla_1(1\ot m\act a) 
+  \nabla_2(1\ot m\act a)\Big)\\
&= d(\omega)\ot m + (-1)^k\omega \Lambda(a,\nabla_1,\nabla_2)(1 \otimes m).
\end{aligned}
$$
It follows then that $\Lambda(a,\nabla_1,\nabla_2)$ is still a connection on $M$ whose $k$-th component is exactly $\Lambda(a,\nabla^k_1,\nabla^k_2)$, as required.

The second part of example follows by Theorem~\ref{thm:quandle}.
\end{proof}

\begin{remark}
For the sake of notation, write $\omega \in \Omega$ as $\sum_{k}\omega_k$, with $\omega_k \in \Omega^k$.
Notice that $f \in \lend \k {\Omega\ot_\cO M}$ is the $k$-component of a connection on $M$ if and only if there exists a $\k$-linear map $f^0\colon  M \lra \Omega^1\ot_\cO M$ such that
\[f(\omega \ot_\cO m) = d(\omega_k) \ot_\cO m + (-1)^k\omega_k f^0(m).\]
Since $\Gamma^k(M)$ is an induced $\tT(A)$-submodule of $\lend \k {\Omega\ot_\cO M}$, there exists a congruence for which $\Gamma^k(M)$ is a congruence class. The submodule of $\lend \k {\Omega\ot_\cO M}$ for which $\Gamma^k(M)$ is a congruence class is the submodule 
$$S(k) \coloneqq \left\{f\in\lend \k {\Omega\ot_\cO M} ~\left\vert~ \begin{gathered} \exists\, f^0\in \Hom_\k(M,\Omega^1 \ot_\cO M) \\ \textrm{and } f(\omega\otimes_\cO m)=(-1)^k\omega_k f^0(m), \\ \forall \, \omega\in\Omega \textrm{ and } m\in M  \end{gathered} \right.\right\}.$$ 
Also the $\k$-submodule $\Gamma(M) \subseteq \lend \k {\Omega\ot_\cO M}$ of all the connections on $M$ is a heap of $T(A)$-modules with the same structure and it is a congruence class with respect to the submodule
\[
S \coloneqq \left\{f\in\lend \k {\Omega\ot_\cO M} ~\left\vert~ \begin{gathered} f(\omega\otimes m) = {\textstyle \sum_k }(-1)^k\omega_k f(1\otimes m) \\  \forall\,  \omega\in\Omega \textrm{ and }  m\in M \end{gathered} \right.\right\}. \qedhere
\]
\end{remark}

\begin{example}[Truss of connections]
As in Example~\ref{ex:connection},
let $(\Omega= \bigoplus_{n\in \NN} \Omega^n,d)$ be a differential graded algebra over a commutative ring $\k$, $A$ be an algebra of $\k$, $M$ be an $\cO$-$A$-bimodule that is finitely generated and projective as a left $\cO$-module. It is well known (see e.g.\ \cite[Section~8]{CunQui:alg}) that the set of connections $\Gamma^1(M)$ is not empty. In particular, for any finite dual basis $e_i\in M$,  $\eps^i\in \lhom \cO M \cO$, there is the associated Grassmannian connection,
\[
\nabla\colon  M\lra \Omega^1\otimes_\cO M, \qquad m \lto \sum_i d(\eps^i(m)) \ot e_i.
\]
Using the heap of $A$-modules structure of $\Gamma^1(M)$ described in Example~\ref{ex:connection} we can construct the cross product truss $\cT(\nabla)\coloneqq \Gamma^1(M)\overset{\nabla}{\rtimes} \tT(A)$ as in Proposition~\ref{prop:cross} with the product:
$$
(\nabla_1,a)(\nabla_2,b) = \left(\nabla_1+ a\cdot \Big(\nabla_2 - \sum_i d\circ \eps^i\ot e_i\Big), ab\right).
$$
In view of Lemma~\ref{lem:cross}, $\Gamma^1(M)$ is a left $\cT(\nabla)$-module with the action
$$
(\nabla_1,a)\cdot \nabla_2 = \nabla_1+ a\cdot (\nabla_2 - \sum_i d\circ \eps^i\ot e_i),
$$
that is, for all $m\in M$,
$$
\Big((\nabla_1,a)\cdot \nabla_2\Big)(m) = \nabla_1(m)+  \nabla_2(m\act a) - \sum_i d(\eps^i(m\act a))\ot e_i.
$$
\end{example}

\begin{example}[Heap of hom-connections]\label{ex:hom-connection}
Let $(\Omega= \bigoplus_{n\in \NN} \Omega^n,d)$ be a differential graded algebra over  a commutative ring $\FF$, $A$ be an algebra of $\k$, $M$ be an $A$-$\cO$-bimodule. Using the left $A$-action on $M$ we consider  $\lend \k {\Xi( M)}$  as a left $A$-module in the standard way.
Then, 
the $\k$-module of all $k$-components of hom-connections on $M$, is an induced $\tT(A)$-submodule of $\lend \k {\Xi(M)}$, and hence a heap of $\tT(A)$-modules with the action
\[
\Lambda(a,\Delta_k,\Delta'_k) = \Delta_k - a\act \Delta_k +a\act \Delta'_k.
\]
By Theorem~\ref{thm:quandle},  $\Gamma_k(M)$  is a left spindle with the operation  
$\Delta_k\diamond _u\Delta'_k = \Delta^k -u\act \Delta_k + u\act \Delta'_k$, for all $u\in A$ (an entropic quandle if $u$ is a unit in $A$). Consequently, the function
$$
r_u\colon  \Gamma_k{(M)}\times \Gamma_k{(M)}\lra \Gamma_k{(M)}\times \Gamma_k(M), \quad (\Delta_k,\Delta'_k)\lto  (\Delta_k -u\act \Delta_k + u\act \Delta'_k,\Delta_k),
$$
is  a solution to the set-theoretic Yang-Baxter equation.
\end{example}

\begin{example}[Heap of splittings of short exact sequences]\label{ex:split}
Let $B$ be a $\k$-algebra over a commutative ring $\k$ and let $P,Q,R$ be left $B$-modules. Assume that
\[
 0 \lra P \overset{\iota}\lra Q \overset{\pi}\lra R \lra 0,
\]
is a short exact sequence of left $B$-modules, that is to say, an extension of $R$ by $P$ in $B$-$\textbf{Mod}$. If $R$ is a $B$-$A$ bimodule, for an (unital) algebra $A$, then the $\k$-module $\Hom_B(R,Q)$ is a  left $A$-module via
\[(a\cdot \phi)(x) = \phi(x\cdot a). \]
Consider the $\k$-submodule  $\Sigma(\pi) \coloneqq \{\sigma\colon R \lra Q \mid \pi \circ \sigma = \id_R\} \subseteq \Hom_B(R,Q)$. It is a sub-heap of the heap $\mathrm{H}\big(\Hom_B(R,Q)\big)$ because
\[\pi \circ [\rho,\sigma,\tau] = \pi\circ \rho - \pi \circ \sigma + \pi \circ \tau = \id_R,\]
and it becomes a heap of $\tT(A)$-modules over the truss $\tT(A)$ associated with $A$ with the ternary operation defined as follows
\[
\Lambda\colon  \tT(A) \times \Sigma(\pi) \times \Sigma(\pi) \lra \Sigma(\pi), \qquad (a,\tau,\sigma) \lto a\cdot \sigma + (1 -a )\cdot \tau \eqqcolon a \triangleright_\tau \sigma.
\]

Furthermore, by Theorem~\ref{thm:quandle}, for all $u\in A$, $\Sigma(\pi)$ is a spindle with the operation
$$
\tau \diamond _u\sigma = u\cdot \sigma + (1 -u )\cdot \tau, \qquad r\lto \sigma(ru) +\tau(r(1-u)).
$$
Consequently, the function
$$
r_u\colon  \Sigma(\pi)\times \Sigma(\pi)\lra \Sigma(\pi)\times \Sigma(\pi), \qquad (\tau,\sigma) \lto (u\cdot \sigma + (1 -u )\cdot \tau, \tau),
$$
is  a solution to the set-theoretic Yang-Baxter equation.

Whenever $u$ is a unit in $A$, $(\Sigma(\pi), \diamond _u)$ is an entropic quandle.

\noindent Observe that $\Sigma(\pi)$ is not a $T(A)$-submodule since, in general,
\[\pi\circ \left(a\cdot \sigma\right)(r) = r \cdot a \neq r.\]

\noindent In this case the submodule for which $\Sigma(\pi)$ is a congruence class is the submodule  
$$
S(\pi)\colon =\{f\in \Hom_A(R,Q)\ |\ \im(f)\subseteq\ker(\pi)\}
$$
of $\Hom_A(R,Q)$.

There is some overlap of this example with Example~\ref{ex:connection}. Recall for instance from \cite{Con:ncg} that a {\em universal differential envelope} of an algebra $B$ is a tensor algebra of the $B$-bimodule $\Omega^1 \coloneqq \ker m_B$, where $m_B\colon B\ot_\k B \lra B$ is the multiplication map of $B$, with the differential given on $B$ by $d(b) = 1\ot b - b\ot 1$, and extended to the whole of $\Omega$ by the graded Leibniz rule. It has been observed in \cite{CunQui:alg} that the degree-zero components of connections on a left $B$-module $M$ with respect to the universal envelope $\Omega$ are in bijective correspondence with the splittings of the short exact sequence
$$
\xymatrix{
0 \ar[r] & \Omega^1\ot_B M\ar[r]^-\iota & B\ot_\k M \ar[r]^-\pi & M \ar[r] & 0,}
$$
where
$$
\iota \colon  \left(\sum_ib_i\ot b'_i\right) \ot m \lto \sum_i b_i\ot b'_i\act m, \qquad \pi\colon  b\ot m \lto b\act m.
$$
Thus $\Gamma^0(M) = \Sigma(\pi)$.

\end{example}

\begin{example}[Truss of splittings]
In the set-up of Example~\ref{ex:split} consider a split exact sequence of left $B$-modules
$$
\xymatrix{0 \ar[r] & P\ar[r]^\iota & Q\ar@<0.5ex>[r]^\pi &R \ar@<0.5ex>[l]^{\varrho} \ar[r]& 0}.
$$
If $R$ is a $B$-$A$-bimodule, then denote by $\Sigma(\pi)$ the heap of $A$-modules of all splittings of $\pi$. Then, by Proposition~\ref{prop:cross}, $\Sigma(\pi)\overset{\varrho}{\rtimes} \tT(A)$ is a truss on the heap $\Sigma(\pi)\times \tT(A)$ with the product
$$
(\sigma_1,a)(\sigma_2,b) = (\sigma_1+a\cdot(\sigma_2 -\varrho), ab).
$$
\end{example}

\begin{example}[Heap of retractions]\label{ex:retraction}
Let $B$ be a $\k$-algebra  and let 
$$
 0 \lra P \overset{\iota}\lra Q \overset{\pi}\lra R \lra 0,
 $$
be a short exact sequence of right $B$-modules.
If $P$ is an $A$-$B$-bimodule, for an (unital) algebra $A$, then the $\k$-module $\Hom_B(Q,P)$ is a left $A$-module via
$(a\cdot \phi)(x) = a\cdot\phi(x).$
Consider the $\k$-submodule  
$$
\Rr(\iota) \coloneqq \{\rho\colon Q \lra P \in \Hom_B(Q,P) \mid \rho \circ \iota = \id_P\} \subseteq \Hom_B(Q,P).
$$
It is a sub-heap of the heap $\mathrm{H}\big(\Hom_B(Q,P)\big)$ because
\[[\rho,\sigma,\tau] \circ \iota = \rho \circ \iota - \sigma \circ \iota + \tau\circ \iota = \id_P,\]
and it becomes a heap of $\tT(A)$-modules through
$$
\Lambda\colon  \tT(A) \times \Rr(\iota) \times \Rr(\iota) \lra \Rr(\iota), \qquad (a,\tau,\sigma) \lto a\cdot \sigma + (1 -a )\cdot \tau \eqqcolon a \triangleright_\tau \sigma.
$$

There is also an overlap between this and Example~\ref{ex:hom-connection}. As shown in \cite[Section~3.9]{Brz:con} and \cite[Theorem~2.2]{BrzElK:int}, in the case of the universal differential graded algebra, there is a bijective correspondence between hom-connections on a right $B$-module $M$ and  retractions of the map $\iota \colon  M\lra \lhom \k {B} M$, given by $\iota(m)(b) = m\act b$. 

\end{example}

\begin{example}[Heap of chain contractions]
Let $A,B$ be two $\k$-algebras. Let $(C_\bullet,c_\bullet)$  be a chain complex of $B$-$A$-bimodules and $(D_\bullet,d_\bullet)$ be a chain complex of $B$-modules. Let also $f_\bullet \colon  C_\bullet \lra D_\bullet$ be a chain map of the underlying chain complexes of left $B$-modules, that is, for every $n \in \ZZ$ the function $f_n\colon  C_n \lra D_n$ is a morphism of left $B$-modules and all the squares in the following diagram of left $B$-modules  are commutative
\[
\xymatrix{
\cdots \ar[r] & C_{n+1} \ar[r]^-{c_{n+1}} \ar[d]_-{f_{n+1}} & C_n \ar[r]^-{c_n} \ar[d]^-{f_n} & C_{n-1} \ar[r] \ar[d]^-{f_{n-1}} & \cdots \\
\cdots \ar[r] & D_{n+1} \ar[r]_-{d_{n+1}} & D_n \ar[r]_-{d_n} & D_{n-1} \ar[r] & \cdots
}
\]
Denote by $\Sigma(f)$ the set of all chain contractions of $f$ (see \cite[Definition 1.4.3]{Weibel}), 
\[\Sigma(f) = \Big\{\{s_n\colon C_n \lra D_{n+1} \textrm{ of }B\textrm{-modules}\mid n\in\ZZ\} ~\Big\vert~ d_{n+1} \circ s_n + s_{n-1}\circ c_n = f_n\Big\}.\]
It is a sub-heap of the heap $\mathrm{H}\left(\Hom_B^{\mathrm{gr}}\left(\bigoplus_{n}C_n,\bigoplus_nD_{n+1}\right)\right)$ 
of degree-one morphisms of left $B$-modules,
because (dropping the subscripts for the sake of clarity)
\[d \circ [s,s',s''] + [s,s',s'']\circ c = (d\circ s + s \circ c) - (d \circ s' + s'\circ c) + (d \circ s'' + s'' \circ c) = f.\]
In addition, $\Sigma(f)$ is a heap of $\mathrm{T}(A)$-modules via 
\[\Lambda\colon \tT(A) \times \Sigma(f) \times \Sigma(f) \lra \Sigma(f), \quad (a,s,s') \lto a\cdot s' + (1-a) \cdot s.\]
In fact, for every $s\in \Sigma(f)$ and $a \in A$, the function $a\cdot s$ defined by
\[(a\cdot s)_n\colon C_n \lra D_{n+1}, \qquad x \lto s_n(x \cdot a),\]
is still left $B$-linear and it satisfies
\begin{multline*}
\left(d_{n+1} \circ (a \cdot s)_n + (a \cdot s)_{n-1}\circ c_n\right)(x) = d_{n+1}\big(s_{n}(x \cdot a)\big) + s_{n+1}\big(c_n(x)\cdot a\big) \\
= d_{n+1}\big(s_{n}(x \cdot a)\big) + s_{n+1}\big(c_n(x\cdot a)\big) = f_n(x \cdot a) = (a\cdot f_n)(x)
\end{multline*}
for all $x \in C_n$ and all $n \in \ZZ$. Therefore,
\begin{multline*}
d \circ \left(a\cdot s' + (1-a) \cdot s\right) + \left(a\cdot s' + (1-a) \cdot s\right) \circ c \\
= \big(d\circ (a \cdot s') + (a \cdot s')\circ c\big) + \big(d\circ ((1-a) \cdot s) + ((1-a) \cdot s)\circ c\big) \\
 = a \cdot f + (1-a) \cdot f = f.
\end{multline*}
In the same way, one may prove that if $(C_\bullet,c_\bullet)$ is a chain complex of right $A$-modules, $(D_\bullet,d_\bullet)$ is a chain complex of $B$-$A$-bimodules and $f_\bullet \colon  C_\bullet \lra D_\bullet$ is a chain map of the underlying chain complexes of right $A$-modules, then the set $\Sigma(f)$ of all chain contractions of $f$ is still a sub-heap of the heap $\mathrm{H}\big(\Hom_A^{\mathrm{gr}}(\bigoplus_{n}C_n,\bigoplus_nD_{n+1})\big)$ and a heap of $\mathrm{T}(B)$-modules with respect to the ternary operation
\[\Lambda\colon \tT(B) \times \Sigma(f) \times \Sigma(f) \lra \Sigma(f), \quad (b,s,s') \lto b\cdot s' + (1-b) \cdot s.\]
Notice that Examples \ref{ex:split} and \ref{ex:retraction} are particular cases of the present example. Regarding Example \ref{ex:split}, if we consider the chain morphism
\[\xymatrix{
\cdots \ar[r] & 0 \ar[r] & 0 \ar[r] \ar[d] & 0 \ar[r] \ar[d] & R \ar[r] \ar@{=}[d] \ar@{.>}[dl]^-{\sigma} & 0 \ar[r] & \cdots \\
\cdots \ar[r] & 0 \ar[r] & P \ar[r]^-{\iota} & Q \ar[r]^-{\pi} & R \ar[r] & 0 \ar[r] & \cdots,
}\]
then chain contractions $(\ldots,0,\sigma,0,\ldots)$ are in bijective correspondence with splittings $\sigma$ of the short exact sequence of left $B$-modules $0 \lra P \xrightarrow{\iota} Q \xrightarrow{\pi} R \lra 0$. Concerning Example \ref{ex:retraction} instead, if we consider the chain morphism (now in the category of right $B$-modules)
\[\xymatrix{
\cdots \ar[r] & 0 \ar[r] & P \ar[r]^-{\iota} \ar@{=}[d] & Q \ar[r]^-{\pi} \ar[d] \ar@{.>}[dl]^-{\rho} & R \ar[r] \ar[d] & 0 \ar[r] & \cdots \\
\cdots \ar[r] & 0 \ar[r] & P \ar[r] & 0 \ar[r] & 0 \ar[r] & 0 \ar[r] & \cdots ,
}\]
then chain contractions $(\ldots,0,\rho,0,\ldots)$ are in bijective correspondence with retractions $\rho$ of $\iota$ in the short exact sequence of right $B$-modules $0 \lra P \xrightarrow{\iota} Q \xrightarrow{\pi} R \lra 0$.
\end{example}

\begin{exlemma}[Heap of derivations]
Let $T$ be a truss. A {\em derivation} of $T$ is a heap endomorphism $D:T\lra T$, such that, for all $s,t\in T$,
\begin{equation}\label{Leibniz}
  D(st) = [D(s)t, st, sD(t)].
\end{equation}
As observed in \cite{Brz:Lie} the set of all derivations of $T$, $\mathrm{Der}(T)$, is a sub-heap of $\eE(T)$. If $T$ is commutative, then $\mathrm{Der}(T)$ is a heap of $T$-modules with the action
$$
\Lambda(t,D_1,D_2) = [D_1,t\act D_1,t\act D_2] : s\lto [D_1(s),tD_1(s),tD_2(s)],
$$
for all $D_1,D_2 \in \mathrm{Der}(T)$ and $s,t\in T$.
\end{exlemma}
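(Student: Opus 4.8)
The plan is to exhibit $\mathrm{Der}(T)$ as an induced $T$-submodule of the endomorphism truss $\eE(T)$, regarded as the $T$-module of Example~\ref{ex:E(T)}, and then appeal to Proposition~\ref{prop:homia}. Recall that $\eE(T)$ is a $T$-module through $(r\act f)(s)=rf(s)$, and that the heap of $T$-modules structure attached to it by Proposition~\ref{prop:homia} is $\Lambda(r,f,f')=[r\act f',r\act f,f]$; by abelianity of the bracket this coincides with $[f,r\act f,r\act f']$, which is precisely the operation written in the statement. Since $\mathrm{Der}(T)$ is already a sub-heap of $\eE(T)$, and it is non-empty because $\id\in\mathrm{Der}(T)$ (indeed $\id(st)=st=[st,st,st]$ by \eqref{eq:malcev}), Definition~\ref{def:induced} reduces the whole claim to a single point: that $\mathrm{Der}(T)$ is closed under the induced action. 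Concretely, I must show that for all $D_1,D_2\in\mathrm{Der}(T)$ and all $r\in T$ the endomorphism $D\coloneqq\Lambda(r,D_1,D_2)$, given by $D(s)=[D_1(s),rD_1(s),rD_2(s)]$, is again a derivation. That $D$ lies in $\eE(T)$ is clear, since $r\act D_1,r\act D_2\in\eE(T)$ by left distributivity and $\eE(T)$ is closed under the pointwise bracket.

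The substance of the argument is the verification of the Leibniz rule \eqref{Leibniz} for $D$. I would expand the left-hand side $D(st)=[D_1(st),rD_1(st),rD_2(st)]$ by inserting $D_i(st)=[D_i(s)t,st,sD_i(t)]$ and using left distributivity to obtain $rD_i(st)=[rD_i(s)t,rst,rsD_i(t)]$; the two occurrences of $rst$ then cancel through the Mal'cev identities \eqref{eq:malcev}. Independently I would expand the right-hand side $[D(s)t,st,sD(t)]$, using right distributivity for $D(s)t=[D_1(s)t,rD_1(s)t,rD_2(s)t]$ and left distributivity for $sD(t)=[sD_1(t),srD_1(t),srD_2(t)]$. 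After reshuffling both nested brackets with the associativity \eqref{eq:assoc} and the abelian heap rules, the two expressions are built from the very same list of elementary terms.

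The only hypothesis that is actually needed, and the single place where it enters, is the \emph{commutativity} of $T$: the cross terms $rsD_i(t)$ produced on the $D(st)$ side have to be identified with the terms $srD_i(t)$ coming from $sD(t)$, and this matching holds exactly because $rs=sr$. Tracking the multiplicative order through these cross terms is the main (albeit modest) obstacle; the remaining manipulations are routine heap bookkeeping. Once $D(st)=[D(s)t,st,sD(t)]$ is secured, $\mathrm{Der}(T)$ is an induced $T$-submodule of $\eE(T)$, and Proposition~\ref{prop:homia} endows it with a heap of $T$-modules structure whose action is the restriction of $\Lambda$, namely the stated one.
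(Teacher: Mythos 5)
Your proposal is correct and follows essentially the same route as the paper: both reduce the claim to checking that $\Lambda(t,D_1,D_2)$ again satisfies the Leibniz rule, by viewing $\mathrm{Der}(T)$ inside the $T$-module $\eE(T)$ of Example~\ref{ex:E(T)} with the induced action of Proposition~\ref{prop:homia}, and both locate the use of commutativity exactly in matching the cross terms $rsD_i(t)$ with $srD_i(t)$. Your extra remark that $\id\in\mathrm{Der}(T)$, ensuring non-emptiness as required by Definition~\ref{def:induced}, is a small but valid addition not spelled out in the paper.
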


\begin{proof}
Since $\Lambda$ is the restriction of the heap of modules action of $T$ on its endomorphism truss $\eE(T)$ as in Example \ref{ex:E(T)}, we only need to check that $\Lambda(t,D_1,D_2)$ is a derivation of $T$. Since both $D_1$ and $D_2$ satisfy \eqref{Leibniz} and $T$ is a commutative truss, for all $s,s'\in T$,
$$
\begin{aligned}
\Lambda(t,D_1,D_2)(ss') &=  [D_1(ss'),tD_1(ss'),tD_2(ss')]\\
&=[D_1(s)s',ss', sD_1(s'), tD_1(s)s',tss', tsD_1(s'), tD_2(s)s',tss', tsD_2(s')]\\
&=[D_1(s)s',tD_1(s)s', tD_2(s)s', ss', sD_1(s'),  stD_1(s'),  stD_2(s')]\\
&= [\Lambda(t,D_1,D_2)(s)s',ss',s\Lambda(t,D_1,D_2)(s')],
\end{aligned}
$$
where, in addition, we have also used the distributive laws of trusses (to derive the second and fourth equalities), and rules of reshuffling and cancellation stemming from the definition of an abelian heap. Hence the operation $\Lambda$ is well defined on $\mathrm{Der}(T)$.
\end{proof}

\section*{Acknowledgements}
The research of S.\ Breaz is supported by a grant of the Ministry of Research, Innovation and Digitization, CNCS/CCCDI--UEFISCDI, project number PN-III-P4-ID-PCE-2020-0454, within PNCDI III. 

The research  of T.\ Brzezi\'nski is partially supported by the National Science Centre, Poland, grant no. 2019/35/B/ST1/01115. 

P.\ Saracco is a Charg\'e de Recherches of the Fonds de la Recherche Scientifique - FNRS and a member of the ``National Group for Algebraic and Geometric Structures and their Applications'' (GNSAGA-INdAM).

The research of B.\ Rybo\l owicz is supported by the EPSRC grant EP/V008129/1.

We would like to thank Agata Pilitowska for historical (and terminological) information about shelves, spindles, racks and quandles which led to Remark~\ref{rem:history}.

\clearpage

\begin{appendix}

\section{Index of frequently used notation} \label{appendix}

{
\small

\begin{multicols}{2}

\subsection{Categories}

\begin{description}[itemsep=1.5pt,leftmargin=19pt]

\item[$\Ab$] abelian groups; \pageref{p-cat-ab}

\item[$\Aff_\FF$] affine spaces over a field $\FF$; \pageref{p-aff-F-space}

\item[$\Aff_{\star}^{\mathbf{is}}$] affine spaces over the truss $\star$ with isotropic $\star$-groups actions; \pageref{p-af-star-is}

\item[$\Aff_T$] $T$-affine spaces; \pageref{p-aff-t}

\item[$\Ah$] abelian heaps; \pageref{p-cat-ah}

\item[$\grp$] groups; \pageref{p-grp}

\item[$\heap$] heaps; \pageref{p-cat-heaps}

\item[$\quandle$] quandles; \pageref{p-quandle}

\item[$\Set$] sets;

\item[$\spin$] spindles; \pageref{p-quandle}

\item[$\MAbs$] $T$-modules with a fixed absorber; \pageref{p-t-abs}

\item[$T\GMod$] $T$-groups; \pageref{p-t-gmod}

\item[$T\GMod_{\mathbf{is}}$] isotropic $T$-groups; \pageref{p-t-gmod}

\item[$T\HMod$] heaps of $T$-modules; \pageref{p-t-hmod} 

\item[$T\HMod^{\mathbf{cn}}$] contractible heaps of $T$-modules; \pageref{p-contr-mod}

\item[$T\HMod^{\mathbf{cn}}_{\mathbf{is}}$] contractible isotropic heaps of $T$-modules; \pageref{p-contr-is-heaps}

\item[$T\HMod_{\mathbf{is}}$] isotropic heaps of $T$-modules; \pageref{p-isotropic-h}

\item[$T\Mod$] $T$-modules; \pageref{p-t-mod}

\item[$T\Mod_{\mathbf{is}}$] isotropic $T$-modules; \pageref{page-iso-t-mod}

\item[$\left(\tT(\FF)\HMod_{\mathbf{is}}^{\mathbf{cn}}\right)^{\mathbf{in}}$] inhabited isotropic and contractible heaps of $\tT(\FF)$-modules; \pageref{cor:afffield}

\end{description}

\smallskip

\subsection{Functors}

\begin{description}[itemsep=1.5pt,leftmargin=19pt]

\item[$\hH$, $\hH(G)$] functor assigning a heap to a group, heap associated with $G$; \pageref{functor-heap}

\item[$\mathcal{H}$] functor assigning a heap of $T$-modules to a $T$-module via induced actions; \pageref{functor-Hh}

\item[$\tT$, $\tT(R)$] functor assigning a truss to a ring,  truss associated with $R$; \pageref{p-ass-truss}

\end{description}

\smallskip

\subsection{Special objects}

\begin{description}[itemsep=1.5pt,leftmargin=19pt]

\item[$\star$] singleton heap, singleton truss, singleton module, singleton heap of modules; \pageref{page-star}

\item[$\mathrm{Ann}(M)$] annihilator or contracting paragon of a heap of $T$-modules; Lemma~\ref{lem:annihilator}

\item[$\mathrm{Ann}_e(M)$] $e$-annihilator or $e$-contracting paragon of a $T$-module; Lemma~\ref{lem:annMod}

\item[$\eE(H)$] endomorphism truss of an abelian heap $H$; \pageref{page-end-tr}

\item[$\eE_T(M)$] endomorphism truss of an heap of $T$-modules $M$; \pageref{page:ETM}

\item[$\gG(H;e)$] retract of a heap $H$ at $e\in H$; \pageref{groups-retract}

\item[$\rR(T)$] ring associated with a truss $T$ by extension $T\boxplus \star$ through the singleton truss; \pageref{rR(M)}

\item[$\rR(M)$] module over the ring $\rR(T)$ associated with the $T$-module $M$; \pageref{rR(M)}

\item[$\mathrm{Stab}(M)$] stabiliser or isotropy paragon of a $T$-module $M$ (see Lemma~\ref{lem:stab}), stabiliser or isotropy paragon of a heap of $T$-modules $M$ (see Lemma~\ref{lem:isotropicheap});

\item[$\mathrm{Trans}(H)$] translation group of $H$; \pageref{p-trans-group}

\end{description}

\smallskip

\subsection{Other notations}

\begin{description}[itemsep=1.5pt,leftmargin=19pt]

\item[$\sim_S$] sub-heap equivalence relation; \pageref{page-sub-heap-congr}

\item[$\tau_a^b$] translation automorphism; \pageref{page-trans-auto}

\item[$\triangleright_e$] $e$-induced action; \pageref{def:induced}

\item[$\big(M,\act_e\big)$] associated $T$-module; Lemma \ref{lem:homtoind}

\item[$\Sym{X}$] the symmetric group on a set $X$; \pageref{def:Taffinespace}

\end{description}

\end{multicols}

}

\end{appendix}

\smallskip

\end{document}